\documentclass[12pt,amscd]{amsart}
\usepackage{graphicx} 
\usepackage{amsmath,amsxtra,amssymb,latexsym, amscd,amsthm}

\usepackage{xcolor}

\DeclareMathOperator{\supp}{supp}
\DeclareMathOperator{\G}{G}
\DeclareMathOperator{\ass}{Ass}
\DeclareMathOperator{\hgt}{ht}
\DeclareMathOperator{\Min}{Min}

\DeclareMathOperator{\lcm}{lcm}
\DeclareMathOperator{\pol}{pol}

\DeclareMathOperator{\depth}{depth}

\newtheorem{thm}{Theorem}[section]

\newtheorem{lem}[thm]{Lemma}

\newtheorem{prop}[thm]{Proposition}
\theoremstyle{definition}

\newtheorem{exm}[thm]{Example}
\newtheorem{rem}[thm]{Remark}

\newtheorem{conj}[thm]{Conjecture}

\def\Z {\mathbb Z}
\def\N {\mathbb N}

\def\p {\mathfrak p}

\title{Cohen-Macaulayness of powers of edge ideals of  edge-weighted graphs}

\author{Jiaxin Li}
\address{School of Mathematical Sciences, Soochow University, Suzhou, Jiangsu, 215006, P.R.~China}
\email{lijiaxinworking@163.com}

\author{Tran Nam Trung}
\address{Institute of Mathematics, Vietnam Academy of Science and Technology, 18 Hoang Quoc Viet, 10307 Hanoi, Vietnam}
\email{tntrung@math.ac.vn}

\author{Guangjun Zhu$^{\ast}$}
\address{School of Mathematical Sciences, Soochow University, Suzhou, Jiangsu, 215006, P.R.~China}
\email{zhuguangjun@suda.edu.cn}

\thanks{$^{\ast}$ Corresponding author}

 \date{\today}

\footskip =0.7cm

\textwidth=15.3cm \textheight=21cm
\oddsidemargin=0.5cm \evensidemargin=0.5cm

\begin{document}

\thanks{2020 {\em Mathematics Subject Classification}.
Primary  13C15, 13C14; Secondary 05E40,  13F20}

\thanks{Keywords: Cohen-Macaulayness, powers of the edge ideal, the  edge-weighted graph, very well-covered graph,  weighted tree}

\begin{abstract}
In this paper, we characterize the Cohen-Macaulayness of  the second power $I(G_\omega)^2$  of the weighted edge ideal $I(G_\omega)$  when the underlying
graph $G$  is a very well-covered graph. We also  characterize  the Cohen-Macaulayness of all ordinary  powers of $I(G_\omega)^n$ when $G$ is a tree with a perfect matching consisting of pendant edges and the induced subgraph  $G[V(G)\setminus S]$ of $G$ on $V(G)\setminus S$ is a star, where $S$ is the set of all leaf vertices, or if  $G$ is a connected graph with a perfect matching consisting of pendant edges and the induced subgraph  $G[V(G)\setminus S]$ of $G$ on $V(G)\setminus S$ is  a complete graph and  the weight  function $\omega$ satisfies $\omega(e)=1$ for all $e\in E(G[V(G)\setminus S])$.

\end{abstract}

\maketitle

\section{Introduction}
Let $G$ be a finite simple  graph with the  vertex set $V(G)$  and  the  edge set $E(G)$.  A subset $X$ of $V(G)$ is said to be {\it independent} if there is no edge $xy \in E(G)$
for $x, y \in X$. If $X$ is an independent set and  maximal in terms of inclusion, then $X$ is called the {\it maximal independent set}  of $G$, and its cardinality
is called the  {\it independence number} of $G$.
A graph $G$ is called {\it well-covered} (also called {\it unmixed}\,) if all maximal independent sets of $G$ have the same independence number. It
 is known that a well-covered graph  is a member
of the class $W_2$ if removing any vertex leaves a well-covered graph with the same independence number as itself (see \cite{P}). A graph $G$ without isolated vertices is said to be {\it very well-covered} if $|V(G)|$ is an even integer and every maximal independent set of $G$ has cardinality $|V (G)|/2$.

 A function $\omega \colon E(G) \to \Z_{>0}$ is called  {\it an edge-weighted function} (weighted function for short) on $E(G)$.  The pair $(G,\omega)$ is called  a {\it weighted graph}  and is denoted by $G_\omega$.
If  $V(G) = \{x_1,\ldots,x_d\}$, then we can assume that  $R = K[x_1,\ldots,x_d]$ is  a polynomial ring over a  field $K$.
The {\it weighted edge ideal} of $G_\omega$ is defined as a monomial ideal of $R$ as follows:
$$I(G_\omega) = ((x_ix_j)^{\omega(x_ix_j)}\mid x_i x_j\in E(G)).$$
If the weighted function $\omega$ on $E(G)$ is the trivial one, i.e. $\omega(e)=1$ for all $e\in E(G)$, then $I(G_\omega)$ is just the classical edge ideal of $G$, denoted by $I(G)$. It is known  (see e.g. \cite[Proposition 6.1.16]{Vi}) that
\[
\ass(R/I(G)) = \{(v:v \in C )\mid  C \text{ is a minimal vertex cover of } G\}.
\]
In particular,  $\dim R/I(G) = \alpha(G)$, where $\alpha(G)$ is the maximum cardinality of independent sets in $G$.
For an ideal $I\subset R$, we say that it is  {\it Cohen-Macaulay {\em (resp. Gorenstein{\em )} } if the quotient ring $R/I$ is Cohen-Macaulay (resp. Gorenstein).
A graph  $G$ is called a {\it Cohen-Macaulay {\em (}resp. Gorenstein{\em )}} graph if its edge ideal  $I(G)$  is Cohen-Macaulay (resp. Gorenstein). It is known that if $G$ is   Gorenstein, then  it is Cohen-Macaulay, so it is also well-covered. Note that $I(G)=\sqrt{I(G_\omega)}$, so $G$ is Cohen-Macaulay if $G_\omega$ is Cohen-Macaulay (due to \cite[Theorem 2.6]{HTT}).

Recall that for a  homogeneous ideal $I\subset R$ and  any integer $n\ge 1$, the $n$-th symbolic power of $I$  is defined  as
$$I^{(n)}=\bigcap\limits_{\p\in \Min(I)}I^nR_{\p}\cap R,$$
where $\Min(I)$ is the set of minimal primes of $I$.

Much research has been done on the characterization of Cohen-Macaulay graphs. Examples include bipartite graphs, chordal graphs, toric graphs, large perimeter graphs, etc. \emph{(}see \cite{HH, HHZ,HMT, HoT, T,VVW,V}\emph{)}.
For the higher powers and higher symbolic power of $I(G)$, Hoang and the second author of this paper  proved in \cite{HoT} that $I(G)^2$ is Cohen-Macaulay if and only if $G$ is a triangle-free Gorenstein graph.  Furthermore, 
 Cowsik and  Nori in \cite{CN} showed that for a homogeneous radical ideal $I\subset R$,  $I^{n}$ is
Cohen-Macaulay for infinitely many $n$ if and only if $I$ is a complete intersection. In particular,  $I(G)^n$ is Cohen-Macaulay for infinitely many $n$ if and only if $I(G)$ is a complete intersection. 
Hoang,  Minh and  the second author of this paper   proved in \cite{HMT2} that $I(G)^{(2)}$ is Cohen-Macaulay if and only if $G$ is a Cohen-Macaulay graph and for all edges $xy$, $G_{xy}$ is Cohen-Macaulay and $\alpha(G_{xy})=\alpha(G)-1$, where $G_{xy}=G\setminus (N(x)\cup N(y))$ is the localization of $G$ at the edge $xy$. In \cite{GNK}, Giancarlo, Naoki and  Ken-ichi showed that   $I(G)^{(n)}$ is  Cohen-Macaulay  for some integer  $n\ge 3$  if and only if
$G$ is a disjoint union of finitely many complete graphs. In this case,  $I(G)^{(n)}$ is  Cohen-Macaulay   for all $n$. We know that if $I(G)^n$ is Cohen-Macaulay, then $I(G)^n=I(G)^{(n)}$.

Recently, there has been a surge of interest in characterizing weight functions for which the edge ideal of a weighted graph is Cohen-Macaulay, and in computing the depth of powers of the edge ideals of some weighted graphs. For example, Paulsen and Sather-Wagstaff in \cite{PS}  classified Cohen-Macaulay weighted graphs $G_\omega$ where the underlying
graph $G$ is a  cycle, a tree, or a  complete graph.  Seyed Fakhari et al. in \cite{SSTY} continued this study, they  classified
Cohen-Macaulay weighted graph $G_\omega$ when $G$ is a very well-covered graph. Recently, Diem et al. in \cite{DMV} gave a  complete  characterization of sequentially Cohen-Macaulay weighted graphs. In \cite{W},  Wei classified all Cohen-Macaulay weighted chordal graphs from a purely graph-theoretic point of view. Hien in \cite{Hi}
classified Cohen-Macaulay weighted graphs $G_\omega$ if $G$ has a girth of at least $5$.
In \cite{ZDCL}, Zhu et al. gave some exact formulas for the depth of the powers of the edge ideal of a weighted star graph and also
gave some lower bounds on the depth of powers of the integrally closed weighted path.

In this paper, we will focus on characterize the Cohen-Macaulayness of  the second power $I(G_\omega)^2$ of the weighted edge ideal $I(G_\omega)$  when the underlying
graph $G$  is a very well-covered graph. We also  characterize  the Cohen-Macaulayness of all ordinary  powers of $I(G_\omega)^n$ when $G$ is a tree with a perfect matching consisting of 
 pendant edges and the induced subgraph  $G[V(G)\setminus S]$ of $G$ on $V(G)\setminus S$ is a star, where $S$ is the set of all leaf vertices, or if $G$ is a connected graph with a perfect matching consisting of pendant edges and the induced subgraph  $G[V(G)\setminus S]$ of $G$ on $V(G)\setminus S$ is  a complete graph and  the weight  function $\omega$ satisfies $\omega(e)=1$ for all $e\in E(G[V(G)\setminus S])$.

The article is organized as follows. In Section \ref{sec:prelim}, we will collect  some of the essential definitions and terminology that will be needed later. In Section \ref{sec:Second},  we  will prove that if $G$  is a Cohen-Macaulay very well-covered graph, then $G$ has at least two leaves. Furthermore, if $G$ has exactly two leaves, then it is bipartite. We also give the necessary and sufficient condition that
$I(G_\omega)^2$ is Cohen-Macaulay using the weight function  $\omega$. In Section \ref{sec:Higher}, if $G$ is a path of length of $3$,  using the weight function  $\omega$,  we  give the necessary and sufficient condition that
$I(G_\omega)^n$ is Cohen-Macaulay for all $n\ge 1$ (see Proposition \ref{n2}). Under the condition that $G$ is a tree, we give the  conditions that the weight function  $\omega$ satisfies if $I(G_\omega)^n$  is Cohen-Macaulay for all $n\ge 1$. Furthermore, if $G$ is a tree with a perfect matching consisting of pendant edges and the induced subgraph  $G[V(G)\setminus S]$ of $G$ on $V(G)\setminus S$ is a star, where $S$ is the set of all leaf vertices, we show that these conditions are crucial.
If $G$ is a connected graph with a perfect matching consisting of pendant edges, and the induced subgraph  $G[V(G)\setminus S]$ of $G$ on $V(G)\setminus S$ is  a complete graph, and  the weight  function $\omega$ satisfies $\omega(e)=1$ for all $e\in E(G[V(G)\setminus S])$, we show that  $I(G_\omega)^n$  is Cohen-Macaulay for all $n\ge 1$ if and only if $\omega(e)\geqslant 2$ for any pendant edge $e$.

\section{Preliminaries}
\label{sec:prelim}
We begin this section by collecting some terminology and results from graph theory. Let $G$ be a finite  simple   graph with a vertex set $V(G)$ and an edge set $E(G)$. A subset $C \subseteq V(G)$ is a {\it vertex cover}  of
$G$ if, for each $e \in E(G)$, $e\cap C \neq \emptyset$.  If $C$ is minimal with respect to inclusion, then $C$ is called a {\it minimal vertex cover} of $G$.  
Obviously,  $C$ is a vertex cover if and only if $V(G)\setminus C$ is an independent set.
If $v$ is a vertex in $G$, its {\it neighborhood} is the set  $N_G(v)=\{u \in V(G)\mid  vu\in E(G)\}$ and its {\it degree}, denoted by $\deg_{G}(v)$, is $|N_G(v)|$.
If $\deg_G(v) = 1$, then $v$ is called a {\it   leaf vertex}. An edge that is incident with a leaf vertex is called a pendant edge.  The {\it closed neighborhood} of $v$ is  $N_G[v] = N_G(v) \cup \{v\}$; if there is no ambiguity on $G$, we use $N(v)$ and $N[v]$, respectively. For an independent set $S$ of $G$, we denote the closed neighborhood of $S$ by $N_G[S] = S \cup \{v\in V(G) \mid N_G(v)\cap S \ne \emptyset\}$; we write $N_G[v_1,\ldots,v_s]$ stands for $N_G[S]$ if $S=\{v_1,\ldots,v_s\}$.

A subset $M\subset E(G)$ is a {\it matching} if $e\cap e'=\emptyset$ for every pair of edges $e, e'\in  M$. If every vertex of $G$ is incident with an edge in $M$, then $M$ is a {\it perfect matching} of $G$. For a subset $A\subset V(G)$,  let $G[A]$ denote the {\it induced subgraph} of $G$ on $A$. At the same time, we denote the induced subgraph of $G$
on $V(G)\setminus A$ by $G\setminus A$.  If $A=\{v\}$, we will write $G\setminus
v$ instead of $G\setminus \{v\}$ for simplicity.
Let $G_\omega$ be a weighted graph, its {\it induced subgraph }  is a weighted graph $H_\omega$, where $V(H)\subset V(G)$,  and for any $u,v\in V(H)$, $uv\in E(H)$  if and only if  $uv\in E(G)$,
 and the weight $\omega_H(uv)$ of edge $uv$ in $H$ is equal to its weight $\omega_G(uv)$ in $G$.

\medskip
A monomial ideal $I$ is {\it unmixed} if its associated primes have the same height.  It is well known that $I$ is unmixed if $R/I$ is Cohen-Macaulay.  If $I(G_\omega)$ is unmixed,  we say that $G_\omega$ is unmixed.  Note that $I(G)=\sqrt{I(G_\omega)}$, so $G$ is well-covered if $G_\omega$ is unmixed.
Note also that the height $I(G_\omega)$ is equal to the smallest size of the  vertex covers of $G$.

\medskip

Next, we recall a formula by Hochster to ompute  the depth of $R/I$, where $I$ is a monomial ideal. Actually, it gives us a way to check the Cohen-Macaulayness of monomial ideals.

\begin{lem}\cite[Theorem 7.1]{HT} Let $I$ be a monomial ideal. Then
$$\depth R/I = \min\{\depth R/\sqrt{I\colon f}\mid f \text{ is a monomial such that } f \notin I\}.$$
\end{lem}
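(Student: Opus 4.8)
The plan is to read off the depth from local cohomology and its fine grading. Writing $R=K[x_1,\dots,x_n]$ and $\mathfrak m=(x_1,\dots,x_n)$, recall that $\depth R/I=\min\{i\mid H^i_{\mathfrak m}(R/I)\neq0\}$, where $H^i_{\mathfrak m}(-)$ denotes local cohomology. Since $I$ is a monomial ideal, each $H^i_{\mathfrak m}(R/I)$ is $\mathbb{Z}^{n}$-graded, and I would invoke Takayama's formula: for $\mathbf a\in\mathbb{Z}^{n}$ with negative support $G_{\mathbf a}=\{j\mid a_j<0\}$, one has $\dim_K H^i_{\mathfrak m}(R/I)_{\mathbf a}=\dim_K\widetilde H_{i-|G_{\mathbf a}|-1}(\Delta_{\mathbf a}(I);K)$, where the degree complex $\Delta_{\mathbf a}(I)$ has as faces the sets $F\subseteq\{1,\dots,n\}\setminus G_{\mathbf a}$ for which $x^{\mathbf a}$ stays outside $I$ after inverting the variables indexed by $F\cup G_{\mathbf a}$. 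The first step is the key algebraic identification: because $x^{\mathbf a_-}$ becomes a unit once the variables of $G_{\mathbf a}$ are inverted, this membership depends on $x^{\mathbf a}$ only through its positive part $f:=x^{\mathbf a_+}$, which yields $\Delta_{\mathbf a}(I)=\operatorname{link}_{\Delta(\sqrt{I:f})}(G_{\mathbf a})$, where $\Delta(\sqrt{I:f})$ is the Stanley--Reisner complex of the radical $\sqrt{I:f}$.

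With this identification I would prove the asserted equality by showing that the two sets $\{i\mid H^i_{\mathfrak m}(R/I)\neq0\}$ and $\bigcup_{f\notin I}\{i\mid H^i_{\mathfrak m}(R/\sqrt{I:f})\neq0\}$ coincide; taking minima then gives $\depth R/I$ on the left and $\min_{f\notin I}\depth R/\sqrt{I:f}$ on the right. The inclusion ``$\subseteq$'', which makes the minimum attained, is the easier one: given $\mathbf a$ with $H^i_{\mathfrak m}(R/I)_{\mathbf a}\neq0$, set $f=x^{\mathbf a_+}$ (here $f\notin I$, since otherwise the link above is void and the homology vanishes), and compare the identification with Hochster's formula for the local cohomology of the Stanley--Reisner ring $R/\sqrt{I:f}$ expressed through links; the same nonzero reduced homology group then computes a nonzero graded piece of $H^i_{\mathfrak m}(R/\sqrt{I:f})$. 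The reverse inclusion ``$\supseteq$'' is the substantive lower bound $\depth R/\sqrt{I:f}\ge\depth R/I$ valid for every $f\notin I$: starting from a nonzero $\widetilde H_{i-|\sigma|-1}(\operatorname{link}_{\Delta(\sqrt{I:f})}\sigma)$ furnished by Hochster's formula, I must produce a degree $\mathbf a$ realizing it for $R/I$.

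The hard part will be exactly this reverse inclusion, because of a support mismatch: in Takayama's formula $G_{\mathbf a}$ is forced to be disjoint from $\supp f=\supp(x^{\mathbf a_+})$, whereas the face $\sigma$ produced by Hochster's formula may meet $\supp f$, so one cannot simply take $G_{\mathbf a}=\sigma$. I plan to remove this obstruction by stripping variables: replace $f$ by the monomial $\widetilde f$ obtained from $f$ by deleting every power of the variables indexed by $\sigma$. Since each of those variables is inverted in all the localizations entering the link, $f$ and $\widetilde f$ are associates there, whence $\operatorname{link}_{\Delta(\sqrt{I:\widetilde f})}(\sigma)=\operatorname{link}_{\Delta(\sqrt{I:f})}(\sigma)$; moreover $\widetilde f\mid f$ forces $\widetilde f\notin I$, and now $\supp\widetilde f\cap\sigma=\emptyset$. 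Taking $\mathbf a$ with $x^{\mathbf a_+}=\widetilde f$ and $G_{\mathbf a}=\sigma$ (for instance $\mathbf a=\mathbf c-\mathbf 1_\sigma$ when $\widetilde f=x^{\mathbf c}$) then transports the nonzero homology back to $H^i_{\mathfrak m}(R/I)_{\mathbf a}$ through the identification, completing the argument. The remaining work is bookkeeping: using the localization form of Takayama's degree complex so that it is literally the link above, and keeping the standard reduced-homology conventions in the degenerate cases where $\sigma$ is a facet or the link is void.
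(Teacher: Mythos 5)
The paper does not actually prove this lemma: it is imported wholesale from the literature via the citation to Hochster's article, so there is no in-paper argument to compare yours against. Judged on its own merits, your proof is correct, and it is in essence the standard derivation of this depth formula. The identification $\Delta_{\mathbf a}(I)=\operatorname{link}_{\Delta(\sqrt{I\colon f})}(G_{\mathbf a})$ with $f=x^{\mathbf a_+}$ is right, because $F\cup G_{\mathbf a}\in\Delta(\sqrt{I\colon f})$ means exactly $f\notin I\colon\bigl(\prod_{j\in F\cup G_{\mathbf a}}x_j\bigr)^{\infty}=IR_{F\cup G_{\mathbf a}}\cap R$, which is the localization form of the degree complex; note this matches the paper's own observation that $I_W=I\colon f^{\infty}$ with $f=\prod_{x\in W}x$. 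Your ``$\subseteq$'' direction is fine (including the remark that $x^{\mathbf a_+}\in I$ would make the degree complex void and kill the homology), and you correctly isolated the only delicate point, namely that in the ``$\supseteq$'' direction Takayama's formula needs $G_{\mathbf a}$ disjoint from $\operatorname{supp}(x^{\mathbf a_+})$ while Hochster's formula hands you a face $\sigma$ that may meet $\operatorname{supp} f$. Your fix is valid: stripping from $f$ the variables indexed by $\sigma$ changes $f$ only by a unit in every localization $R_{F\cup\sigma}$, so all the relevant links coincide, $\widetilde f\mid f$ keeps $\widetilde f\notin I$, and $\mathbf a=\mathbf c-\mathbf 1_\sigma$ then realizes the same reduced homology as a graded piece of $H^i_{\mathfrak m}(R/I)$. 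Two small caveats worth keeping explicit if you write this up: you must use the localization (Minh--Trung) version of Takayama's formula, valid for all $\mathbf a\in\Z^n$, rather than Takayama's original statement, which carries extra degree restrictions; and the degenerate conventions (void complex has no reduced homology, $\widetilde H_{-1}(\{\emptyset\})=K$) are what make the facet case and the ``$f\in I$'' case come out right, exactly as you indicated.
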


As a consequence, we obtain

\begin{lem} \label{lemCM} Let $I$ be a monomial ideal. Then $I$ is Cohen-Macaulay if and only if $I$ has no embedded prime ideal and $I\colon f$ is Cohen-Macaulay for every monomial $f\notin I$.
\end{lem}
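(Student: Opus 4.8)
The plan is to reduce Cohen--Macaulayness to the equality $\depth R/I=\dim R/I$ and to drive everything from the depth formula \cite[Theorem 7.1]{HT} recalled above. The one computation I would isolate first is the behaviour of that formula under colons. For a monomial $f\notin I$ and a monomial $g$ with $fg\notin I$ one has $(I\colon f)\colon g=I\colon fg$, and $g\notin I\colon f$ holds exactly when $fg\notin I$; hence
\[
\depth R/(I\colon f)=\min\{\depth R/\sqrt{I\colon fg}\mid fg\notin I\},
\]
which is a minimum of the same quantities defining $\depth R/I$ but taken over the smaller index set of multiples of $f$. Consequently
\[
\depth R/(I\colon f)\ \ge\ \depth R/I \qquad\text{for every monomial } f\notin I,
\]
while $I\subseteq (I\colon f)$ gives the opposite inequality on dimensions, $\dim R/(I\colon f)\le \dim R/I$. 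These two inequalities are the engine of the proof.

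For the forward implication, assume $I$ is Cohen--Macaulay. Since a graded Cohen--Macaulay ring is unmixed, $\Ass(R/I)=\Min(I)$, so $I$ has no embedded prime. For the colons I would simply squeeze: for any monomial $f\notin I$,
\[
\dim R/(I\colon f)\le \dim R/I=\depth R/I\le \depth R/(I\colon f)\le \dim R/(I\colon f),
\]
where the middle equality is the Cohen--Macaulayness of $I$ and the outer inequalities are those established above. Hence all terms coincide and $R/(I\colon f)$ is Cohen--Macaulay, in fact equidimensional of dimension $\dim R/I$.

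For the converse, the key observation is that $1$ is itself a monomial not lying in the proper ideal $I$, so the colon hypothesis applied to $f=1$ returns $I\colon 1=I$ and already yields that $R/I$ is Cohen--Macaulay; the no-embedded-prime hypothesis is then automatic, and is recorded only because the forward direction shows it to be necessary. If instead one wishes to argue through the depth formula, as is convenient when the lemma is applied inductively (for $f\ne 1$ the ideals $I\colon f$ are typically supported on fewer variables), I would combine the $g=1$ instance of the formula for each colon, namely $\depth R/\sqrt{I\colon f}\ge \depth R/(I\colon f)$, with the Cohen--Macaulayness $\depth R/(I\colon f)=\dim R/(I\colon f)$ and the containment $\Ass(R/(I\colon f))\subseteq \Ass(R/I)$; the resulting inclusion $\Min(I\colon f)\subseteq \Min(I)$, which is exactly the absence of embedded primes, is what keeps the minimum in the formula from dropping below $\dim R/I$.

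The step I expect to require the most care is the bookkeeping between the non-radical colon ideals $I\colon f$, where the Cohen--Macaulay hypothesis lives, and their radicals $\sqrt{I\colon f}$, which are the ideals that actually appear in the depth formula. The clean way around this is precisely the pair of one-line applications of \cite[Theorem 7.1]{HT} above, once to $I$ and once to each $I\colon f$ with $g=1$; everything else is the elementary comparison of dimensions together with the standard fact that a graded Cohen--Macaulay ring has no embedded primes.
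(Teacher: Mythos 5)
Your main argument is correct and is essentially the paper's own route: the paper presents this lemma as a direct consequence of the depth formula \cite[Theorem 7.1]{HT} quoted immediately before it, and that is exactly what you do --- the identity $(I\colon f)\colon g=I\colon fg$ exhibits $\depth R/(I\colon f)$ as a minimum over a subset of the terms computing $\depth R/I$, the containment $I\subseteq I\colon f$ gives $\dim R/(I\colon f)\leqslant \dim R/I$, the squeeze settles the forward direction, and $f=1$ settles the converse. You are also right that the converse is trivial precisely because $1$ counts as a monomial, which makes the no-embedded-prime hypothesis redundant in that direction.

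One caution about the alternative converse sketched in your last paragraph: the assertion that $\ass(R/I)=\Min(I)$ ``keeps the minimum in the formula from dropping below $\dim R/I$'' is false. For $I=(xy,xz)\subset K[x,y,z]$ one has $\ass(R/I)=\Min(I)=\{(x),(y,z)\}$, and every colon $I\colon f$ by a monomial $f\ne 1$ with $f\notin I$ equals $(x)$ or $(y,z)$, hence is Cohen-Macaulay; yet $\depth R/\sqrt{I\colon x}=\depth R/(y,z)=1<2=\dim R/I$, and indeed $I$ is not Cohen-Macaulay. So that route genuinely needs equidimensionality of $I$, not merely the absence of embedded primes; your proof stands only because the $f=1$ observation already closes the converse (and the same example shows the lemma itself would be false under a convention excluding $f=1$ from the monomials).
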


\medskip
  Let $V$ be the set of all variables of $R$. For a monomial $f$ in $R$, its support is  $\supp(f) = \{x\in V \mid  x \text{ is divisible by } f\}$, i.e., it is the set of all variables appearing in $f$.
For a monomial ideal $I$ of $R$ and a subset $W$ of $V$,
the restriction of $I$ on $W$ is
$$I|_W = (f \mid f \text{ is a minimal generator of\ } I \text{ such that } \supp(f)\subseteq W),$$ denoted by $I|_W$,
and define $I_W = IR_W \cap R$ as  the localization of $I$ with respect to $W$. Note that $I_W = I \colon f^\infty$ with $f=\prod_{x\in W} x$.

\medskip
The following lemmas are obvious.
\begin{lem}\label{delete} Let $I$ and $J$ be two monomial ideals in $R$ and let $W$ be a subset of $V$. Then
\begin{enumerate}
    \item $(I\cap J)|_W=I|_W \cap J|_W$; and
    \item $(I^n)|_W = (I|_W)^n$ for all $n\geqslant 1$.
\end{enumerate}
\end{lem}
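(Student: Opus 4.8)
The plan is to check both identities at the level of monomials: every ideal appearing is monomial, a monomial ideal is determined by the set of monomials it contains, and a monomial lies in a monomial ideal precisely when it is divisible by one of its minimal generators. For a monomial $m=\prod_{x\in V}x^{a_x}$ I write $m_W=\prod_{x\in W}x^{a_x}$ for the part of $m$ supported on $W$. The first thing I would establish is the membership criterion
$$m\in I|_W \quad\Longleftrightarrow\quad m_W\in I,$$
valid for every monomial ideal $I$ and every monomial $m$. This is where the bookkeeping lives: if $m\in I|_W$ then $m$ is divisible by a minimal generator $g$ of $I$ with $\supp(g)\subseteq W$, and since $g\mid m$ together with $\supp(g)\subseteq W$ forces $g\mid m_W$, we get $m_W\in I$; conversely, any minimal generator of $I$ dividing $m_W$ automatically has support inside $\supp(m_W)\subseteq W$, so $m_W\in I$ produces a generator witnessing $m\in I|_W$.

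Granting this criterion, part (1) is purely formal. For a monomial $m$ one has $m\in(I\cap J)|_W$ iff $m_W\in I\cap J$, i.e.\ iff $m_W\in I$ and $m_W\in J$, i.e.\ iff $m\in I|_W$ and $m\in J|_W$; as both restrictions are monomial ideals this says exactly $m\in I|_W\cap J|_W$. Hence the monomial ideals $(I\cap J)|_W$ and $I|_W\cap J|_W$ contain the same monomials and therefore coincide.

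For part (2) I would first record that every minimal generator of $(I|_W)^n$ is a product of $n$ minimal generators of $I|_W$, each of which is supported on $W$; thus all minimal generators of $(I|_W)^n$ are supported on $W$, which yields $((I|_W)^n)|_W=(I|_W)^n$ and, via the criterion, $m\in(I|_W)^n$ iff $m_W\in(I|_W)^n$ for every monomial $m$. Applying the criterion to $I^n$ as well, the identity $(I^n)|_W=(I|_W)^n$ reduces to the statement that a monomial $u$ with $\supp(u)\subseteq W$ lies in $I^n$ if and only if it lies in $(I|_W)^n$. One direction is immediate from $I|_W\subseteq I$. The reverse direction is the one genuine step: if $u\in I^n$ then $u$ is divisible by a product $f_1\cdots f_n$ of minimal generators of $I$, and because this product divides $u$ while $\supp(u)\subseteq W$, each $f_i$ must have support in $W$, so each $f_i\in I|_W$ and $u\in(I|_W)^n$. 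I expect this last implication---propagating the support condition $\supp(u)\subseteq W$ to the individual factors of a factorization of $u$ in $I^n$---to be the only place requiring genuine attention; everything else is formal manipulation of monomial membership.
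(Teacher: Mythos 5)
Your proof is correct. Note that the paper itself offers no argument here: it introduces this lemma with the phrase ``The following lemmas are obvious'' and states it without proof, so there is no official proof to compare against. Your writeup supplies exactly the verification the authors left implicit, and it is organized well: the membership criterion $m\in I|_W \Longleftrightarrow m_W\in I$ is the right observation to isolate, part (1) then follows formally, and in part (2) you correctly identify (and correctly handle) the only nontrivial step, namely that for a monomial $u$ with $\supp(u)\subseteq W$, a divisibility witness $f_1\cdots f_n\mid u$ with each $f_i$ a minimal generator of $I$ forces $\supp(f_i)\subseteq W$ for every $i$, so each factor already lies in $I|_W$. All the supporting facts you invoke (a monomial ideal is determined by its monomials; a monomial belongs to a monomial ideal iff it is divisible by a generator; minimal generators of $I^n$ are products of $n$ minimal generators of $I$) are standard and used correctly.
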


\begin{lem}\label{localization} Let $I$ and $J$ be two monomial ideals in $R$ and let $W\subseteq V$. Then
\begin{enumerate}
    \item $(I\cap J)_W=I_W \cap J_W$;
    \item $(I^n)_W = (I_W)^n$ for all $n\geqslant 1$; and
    \item if $I$ is Cohen-Macaulay, then  $I_W$ is also Cohen-Macaulay. 
\end{enumerate}
\end{lem}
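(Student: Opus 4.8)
The plan is to reduce all three parts to the identity $I_W = I:f^\infty$ with $f=\prod_{x\in W}x$ (noted in the text), together with the elementary observation that $I_WR_W = IR_W$. Indeed, inside $R_W$ we have $I\subseteq I_W\subseteq IR_W$, and extending back gives $IR_W = I_WR_W$; I will use this repeatedly.

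For part (1), I would invoke exactness (flatness) of localization, $(I\cap J)R_W = IR_W\cap JR_W$, and then contract to $R$, using that contraction commutes with finite intersection, to obtain
\[
(I\cap J)_W=(I\cap J)R_W\cap R=(IR_W\cap R)\cap(JR_W\cap R)=I_W\cap J_W.
\]
Alternatively one checks directly that $(I\cap J):f^\infty=(I:f^\infty)\cap(J:f^\infty)$, the union over powers of $f$ commuting with the intersection because the colon chains are increasing.

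For part (2), flatness gives $(I^n)R_W=(IR_W)^n$, so it suffices to show $(I_W)^n=(IR_W)^n\cap R$. The point is that $(I_W)^n$ is already saturated with respect to $f$: its minimal monomial generators are products of generators of $I_W$, none of which involves a variable of $W$, so if $gf^N\in (I_W)^n$ then already $g\in(I_W)^n$. Hence $(I_W)^n=(I_W)^nR_W\cap R$, and since $(I_W)^nR_W=(I_WR_W)^n=(IR_W)^n=(I^n)R_W$, we conclude $(I_W)^n=(I^n)R_W\cap R=(I^n)_W$.

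Part (3) is where the real care is needed, and I expect it to be the main obstacle. The subtlety is that ``$I_W$ is Cohen--Macaulay'' means $R/I_W$ is Cohen--Macaulay, where $I_W$ is an ideal of the full ring $R$; this is not literally the statement that the localized ring $(R/I)_W=R_W/IR_W$ is Cohen--Macaulay, which is all that the naive ``localization preserves Cohen--Macaulayness'' yields. To bridge the two I would exploit that $I_W$ is generated by monomials avoiding the variables of $W$: writing $R'=K[\,x:x\in V\setminus W\,]$ and letting $J\subseteq R'$ be the ideal generated by those same monomials, one has the cylinder decomposition $R/I_W=(R'/J)[\,x:x\in W\,]$ and the Laurent description $(R/I)_W=(R'/J)[\,x^{\pm1}:x\in W\,]$. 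Since Cohen--Macaulayness is a local property, $(R/I)_W=S^{-1}(R/I)$ is Cohen--Macaulay whenever $R/I$ is; and a (Laurent) polynomial extension is Cohen--Macaulay if and only if its base is, the descent direction following from faithfully flat local descent with regular fibers. Chaining these, $R/I$ Cohen--Macaulay forces $R'/J$ Cohen--Macaulay, hence $R/I_W=(R'/J)[\,x:x\in W\,]$ Cohen--Macaulay. The step that must be argued rather than asserted is precisely this passage from the localized ring back to the quotient $R/I_W$.
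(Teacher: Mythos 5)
Your proof is correct. There is, however, nothing in the paper to compare it against: Lemma \ref{localization} is prefaced by ``The following lemmas are obvious'' and is stated without any proof, so your write-up supplies exactly the details the authors suppress. Parts (1) and (2) are done the natural way (flatness of localization plus contraction commuting with intersection; and the observation that $I_W = I\colon f^\infty$ is generated by monomials in the variables outside $W$, hence is $f$-saturated, which yields $(I_W)^n=(I_W)^nR_W\cap R=(I^n)R_W\cap R$). Your diagnosis that part (3) is the only point with genuine content is also right: ``$I_W$ is Cohen-Macaulay'' refers to the quotient $R/I_W$ of the full polynomial ring, not merely to the localized ring $R_W/IR_W$, and the bridge you build --- $R/I_W\cong (R'/J)[\,x: x\in W\,]$ and $R_W/IR_W\cong (R'/J)[\,x^{\pm1}: x\in W\,]$, combined with preservation of Cohen-Macaulayness under localization, faithfully flat descent along $R'/J\to (R'/J)[\,x^{\pm1}: x\in W\,]$, and the fact that polynomial extensions preserve Cohen-Macaulayness --- closes the gap completely. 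One small remark: in the descent step, faithful flatness alone suffices (for a flat local homomorphism, the source is Cohen-Macaulay whenever the target is); the regular-fiber hypothesis is only needed for the ascent direction, which in your argument is handled anyway by the polynomial-extension fact, so your appeal to regular fibers is harmless but superfluous.
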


\begin{lem} \label{L1} Suppose that $G$ has a pendant edge $xy$ with $\deg_G(y)=1$. For any integer $n\ge 1$, if $I(G_\omega)^n$ is unmixed, then $I((G\setminus x)_\omega)^n$ is also unmixed.
\end{lem}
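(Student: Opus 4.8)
The plan is to reduce everything to a statement about localizing and then contracting the monomial ideal $I(G_\omega)^n$, and to transfer unmixedness across a contraction of polynomial rings. Write $R=K[V(G)]$, set $I=I(G_\omega)$, and let $N_G(x)=\{y,z_1,\dots,z_k\}$ with $a:=\omega(xy)$. Since $y$ is a leaf, the only generator of $I$ involving $y$ is $(xy)^{a}$, and every edge of $G$ avoiding $x$ also avoids $y$; hence $I':=I((G\setminus x)_\omega)$ lives in $B:=K[V(G)\setminus\{x,y\}]$, and $I((G\setminus x)_\omega)^n$ is, after the harmless adjunction of the isolated variable $y$, just $(I')^n$. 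Thus it suffices to prove that $(I')^n$ is unmixed in $B$.

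First I would localize at the leaf. By Lemma \ref{localization}(2), $(I^n)_{\{y\}}=(I_{\{y\}})^n$, and since $(I^n)_{\{y\}}=I^n\colon y^\infty$ its associated primes are exactly those $P\in\ass(R/I^n)$ with $y\notin P$; as $I^n$ is unmixed of some height $c$, this subset is again of pure height $c$, so $(I^n)_{\{y\}}$ is unmixed. Saturating the monomial ideal $I$ with respect to $y$ strips the $y$-part from each generator, so $I_{\{y\}}=LR$ where
$$L=(x^{a})+(x^{\,\omega(xz_i)}z_i^{\,\omega(xz_i)}\mid 1\le i\le k)+I'\subseteq A:=K[V(G)\setminus\{y\}]=B[x].$$
Then $L^n$ is unmixed in $A$ of height $c$, and the decisive gain is that $x^{a}\in L$ gives $x^{na}\in L^n$, so $x\in\sqrt{L^n}$ and therefore $x$ lies in every associated prime of $L^n$.

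Next I would pass from $A=B[x]$ down to $B$ by contraction. One checks directly that $L^n\cap B=(I')^n$, since a monomial of $L^n$ free of $x$ can only be assembled from the $x$-free generators, i.e. from $I'$. The inclusion of $B$-modules $B/(I')^n\hookrightarrow A/L^n$ then gives $\ass_B(B/(I')^n)\subseteq\ass_B(A/L^n)$. Because $x^{na}\in L^n$, the module $A/L^n$ is a module over $A/(x^{na})=B[x]/(x^{na})$, which is free of rank $na$ over $B$, so $A/L^n$ is a finitely generated $B$-module and $\ass_B(A/L^n)=\{P\cap B\mid P\in\ass_A(A/L^n)\}$. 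Every such $P$ is a monomial prime containing $x$, so $P\cap B$ is obtained by deleting the variable $x$ and $\hgt_B(P\cap B)=\hgt_A P-1=c-1$. Consequently every associated prime of $(I')^n$ has height $c-1$; that is, $(I')^n$, and hence $I((G\setminus x)_\omega)^n$, is unmixed.

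I expect the main obstacle to be the contraction step, namely identifying $\ass_B(A/L^n)$ with the contractions of $\ass_A(A/L^n)$. This is exactly where the preliminary localization at $y$ earns its keep: the resulting membership $x^{a}\in L$ (hence $x^{na}\in L^n$) simultaneously forces $x$ into every associated prime of $L^n$, which guarantees a uniform drop of height by one, and makes $A/L^n$ a finitely generated $B$-module, which is what legitimizes the associated-prime comparison under the ring map $B\hookrightarrow A$. Without first localizing, the generator $(xy)^{a}$ obstructs a direct colon-ideal computation whenever $a\ge 2$, so the two-step localize-then-contract route seems essential.
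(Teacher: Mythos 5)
Your proof is correct, and it takes a genuinely different route from the paper's. The paper starts from an irredundant primary decomposition $I=Q_1\cap\cdots\cap Q_s$ of $I=I(G_\omega)$, sorted according to whether a minimal-height component contains $x$ or $y$; unmixedness gives $I^n=I^{(n)}=Q_1^n\cap\cdots\cap Q_m^n$, and applying the restriction operator $(-)|_W$ with $W=V(G)\setminus\{x\}$ (Lemma \ref{delete}) it shows that the components whose radical contains $y$ become redundant, while the surviving restricted components $Q_i'$ all have height $\hgt(I)-1$, whence unmixedness. You instead saturate at the leaf $y$ first (Lemma \ref{localization}), which converts the generator $(xy)^{a}$ into $x^{a}$; the decisive consequence, as you note, is that $x$ is nilpotent modulo $L^n$, so every associated prime of $L^n$ contains $x$ automatically --- this hands you for free the combinatorial input the paper has to invoke (every minimal prime of $I$ contains exactly one of $x,y$) --- and it makes $A/L^n$ module-finite over $B$, legitimizing the transfer $\ass_B(A/L^n)=\{P\cap B\mid P\in\ass_A(A/L^n)\}$ (for monomial ideals this can also be seen by contracting an irredundant primary decomposition of $L^n$ to $B$, contractions of primary ideals being primary). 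Your two nonstandard steps both check out: stripping $y$ from each minimal generator does compute $I\colon y^{\infty}$, and $L$ is automatically a proper ideal since no pure power of $y$ lies in $I$, so the contraction identity $L^n\cap B=(I')^n$ is never vacuous. What the paper's bookkeeping buys in exchange is an explicit irredundant primary decomposition of $I((G\setminus x)_\omega)^n$, which it immediately reuses in the proof of Lemma \ref{L2}; your argument establishes unmixedness but does not exhibit that decomposition. Finally, your closing claim that the localize-then-contract route ``seems essential'' overstates the case: the paper's proof shows one can work directly with the primary components and the restriction to $V(G)\setminus\{x\}$, with no preliminary saturation at $y$.
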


\begin{proof} For simplicity, set $I = I(G_\omega)$ and $H = G\setminus x$. Assume that 
\[
I = Q_1\cap Q_2\cap\cdots\cap Q_{k}\cap Q_{k+1}\cap\cdots\cap Q_m \cap Q_{m+1}\cap \cdots\cap Q_s
\]
is an irredundant primary decomposition of $I$, where
\begin{itemize}
    \item $\hgt(Q_i) = \hgt(I)$ for $i=1,\ldots,m$; and $\hgt(Q_i) > \hgt(I)$ for $i=m+1,\ldots,s$.
    \item $x\in Q_i$ for $i=1,\ldots,k$; and $y\in Q_i$ for $i=k+1,\ldots, m$.
\end{itemize}
Since $I^n$ is unmixed, we have $I^n = I^{(n)}$, where $I^{(n)}$ is the $n$-th symbolic power of $I$. So 
$I^n = Q_1^n\cap\cdots\cap Q_k^n\cap Q_{k+1}^n \cap \cdots\cap Q_{m}^n$.

Let $I'$ be the restriction of $I$ on $V(G)\setminus \{x\}$, then, 
by Lemma \ref{delete}, we have 
$I(H_\omega)^n = (I')^n=(I^n)'$.  So again by this lemma we get that
$$I(H_\omega)^n = {Q'}_1^n\cap\cdots\ {Q'}_k^n\cap Q_{k+1}^n \cap \cdots\cap Q_m^n$$
 is a primary decomposition of $I(H_\omega)^n$.

Since every monomial generator of $I(H_\omega)^n$ contains neither $x$ nor $y$, it follows that
$$I(H_\omega)^n = {Q'}_1^n\cap\cdots\cap {Q'}_k^n.$$
Since every ${Q'}_i$ with $i=1,\ldots,k$ has height $\hgt(I)-1$, this representation is an irredundant primary decomposition of $I(H_\omega)^n$, and thus $I(H_\omega)^n$ is unmixed, as required.
\end{proof}

Further, we obtain
\begin{lem} \label{L2} Suppose that $G$ has a pendant edge $xy$ with $\deg_G(y)=1$. For any integer $n\ge 1$, if  $I(G_\omega)^n$ is Cohen-Macaulay, then $I((G\setminus x)_\omega)^n$ is Cohen-Macaulay.
\end{lem}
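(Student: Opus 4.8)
The plan is to verify the Cohen-Macaulay criterion of Lemma \ref{lemCM} for the ideal $I(H_\omega)^n$, where $H=G\setminus x$. Throughout set $I=I(G_\omega)$, $J=I^n$ and $W=V(G)\setminus\{x\}$; by Lemma \ref{delete} one has $J|_W=(I|_W)^n=I(H_\omega)^n$, so I write $J'=I(H_\omega)^n=J|_W$. According to Lemma \ref{lemCM}, I must establish two things for $J'$: that it has no embedded prime, and that $J'\colon g$ is Cohen-Macaulay for every monomial $g\notin J'$. The first is immediate: since $J$ is Cohen-Macaulay it is unmixed, so all its associated primes have the same height and hence it has no embedded prime; Lemma \ref{L1} then gives that $J'$ is unmixed as well, and an unmixed monomial ideal (all associated primes of one height, while an embedded prime would strictly contain a minimal one) has no embedded prime.

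For the colon condition I first reduce the class of test monomials. Because $y$ is a leaf of $G$, it becomes isolated in $H$, so no minimal generator of $J'$ is divisible by $x$ or by $y$; consequently $J'\colon g=J'\colon g_0$, where $g_0$ is obtained from $g$ by deleting its $x$- and $y$-parts, and I may assume $\supp(g)\subseteq V(G)\setminus\{x,y\}$. For such a $g$, membership in $J$ of a monomial supported off $x$ is equivalent to membership in $J|_W=J'$, whence $g\notin J'$ forces $g\notin J$; moreover a direct check (using that $J'$ has no generator involving $x$) shows $J'\colon g=(J\colon g)|_W$. Since $J$ is Cohen-Macaulay and $g\notin J$, Lemma \ref{lemCM} applied to $J$ guarantees that $J\colon g$ is Cohen-Macaulay. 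The problem is thus reduced to the transfer statement: restriction to $W=V(G)\setminus\{x\}$ carries the Cohen-Macaulay ideal $J\colon g$ to a Cohen-Macaulay ideal.

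This transfer is where the pendant structure must be used, and it is the main obstacle, since restriction does not preserve Cohen-Macaulayness in general. The key feature is that the only edge of $G$ meeting $y$ is $xy$, whose generator $(xy)^{\omega(xy)}$ contributes equal powers of $x$ and of $y$; hence every minimal generator of $J$, and so of $J\colon g$, that is divisible by $y^{j}$ is also divisible by $x^{j}$, so the generators removed by restriction to $W$ are exactly those divisible by $x$. To exploit this I pass to the localization at $x$: by Lemma \ref{localization} the ideal $(J)_{\{x\}}=\bigl((I)_{\{x\}}\bigr)^{n}$ is Cohen-Macaulay, where $(I)_{\{x\}}=I(H_\omega)+\bigl(z^{\omega(xz)}\mid z\in N_G(x)\bigr)$, and in $(I)_{\{x\}}$ the leaf variable $y$ sits in the single pure power $y^{\omega(xy)}$, decoupled from the edges of $H$. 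The remaining and most delicate step is to descend from the Cohen-Macaulayness of this localization to that of $I(H_\omega)^{n}$ by peeling off the extra pure-power generators on $N_G(x)$: the favourable feature is that $y$, occurring only in a pure power and not in any edge of $H$, should be separable (much as a variable occurring only as a pure power can be split off a polynomial extension), whereas the genuinely hard point is to carry such a separation through the $n$-th power and to eliminate the remaining pure powers $z^{\omega(xz)}$ attached to the non-leaf neighbours of $x$, which \emph{do} meet the edges of $H$. I expect to resolve this last point by an induction on the number of pendant edges, reducing to a base configuration, with the control of the interaction between these pure powers and $I(H_\omega)$ inside the $n$-th power forming the crux of the argument.
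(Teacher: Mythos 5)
Your preparatory reductions are correct, and they coincide with the set-up of the paper's own argument: unmixedness of $J'=I((G\setminus x)_\omega)^n$ follows from Lemma \ref{L1}; since no minimal generator of $J'$ involves $x$ or $y$, it suffices to test monomials $f$ with $\supp(f)\subseteq V(G)\setminus\{x,y\}$; and for such $f$ one indeed has $J'\colon f=(J\colon f)|_W$, where $J=I(G_\omega)^n$ and $W=V(G)\setminus\{x\}$. But the proposal stops exactly where the content of the lemma begins: the ``transfer statement'' --- that restriction to $W$ carries the Cohen-Macaulay ideal $J\colon f$ to a Cohen-Macaulay ideal --- is never proved. You only outline a strategy (localize at $x$, split off the leaf variable, peel away the pure powers $z^{\omega(xz)}$, induct on pendant edges) and explicitly defer its crux. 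Since restriction of monomial ideals does not preserve Cohen-Macaulayness in general, this unproved transfer \emph{is} the lemma, so the proposal is incomplete.

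Moreover, the route you sketch points in an unpromising direction. Write $J=Q_1^n\cap\cdots\cap Q_m^n$ using unmixedness ($I^n=I^{(n)}$), where, because $xy$ is pendant, each $P_i=\sqrt{Q_i}$ contains exactly one of $x,y$; say $x\in P_i$ for $i\leqslant k$ and $y\in P_i$ for $i>k$. The proof of Lemma \ref{L1} shows $J'=\bigcap_{i\leqslant k}(Q_i')^n$, i.e.\ $J'$ is assembled from the components \emph{containing} $x$. By contrast, your localization $(J)_{\{x\}}=J\colon x^\infty$ deletes exactly those components and keeps the ones containing $y$; so the Cohen-Macaulayness of $(I_{\{x\}})^n$ imported via Lemma \ref{localization} concerns the complementary half of the decomposition, and ``peeling off'' generators (an operation with no Cohen-Macaulay permanence) cannot recover $J'$ from it. Note also that even the decoupled leaf variable does not split off at the level of powers, since $(K+(y^q))^n=\sum_j K^j(y^q)^{n-j}\neq K^n+(y^{qn})$. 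The paper closes the gap with one concrete device that your argument lacks: for a test monomial $f\notin J'$ supported off $\{x,y\}$, set $g=f\,y^{n\omega(xy)}$. Then $g\in Q_i^n$ for every $i>k$ (because $y^{\omega(xy)}\in Q_i$ by primaryness), while for $i\leqslant k$ one has $g\in Q_i^n$ if and only if $f\in (Q_i')^n$; hence $\sqrt{J\colon g}=(x)+\sqrt{J'\colon f}$. Since $g\notin J$ and $J$ is Cohen-Macaulay, $J\colon g$ is Cohen-Macaulay by Lemma \ref{lemCM}, and adjoining or removing the free variable $x$ does not affect Cohen-Macaulayness; thus $\sqrt{J'\colon f}$ is Cohen-Macaulay, and together with the unmixedness from Lemma \ref{L1} this proves the lemma. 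The substitution $f\mapsto f\,y^{n\omega(xy)}$, which converts a test monomial for $J'$ into a test monomial for $J$ whose colon automatically discards the $y$-components, is the missing idea.
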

\begin{proof} Let $I = I(G_\omega)$,  $H = G\setminus x$ and
$$I = Q_1\cap \cdots\cap Q_{k} \cap Q_{k+1}\cap\cdots\cap Q_m \cap Q_{m+1}\cap \cdots\cap Q_s$$
be an irredundant primary decomposition of $I$ such that 
\begin{itemize}
    \item $\hgt(Q_i) = \hgt(I)$ for $i=1,\ldots,m$; and $\hgt(Q_i) > \hgt(I)$ for $i=m+1,\ldots,s$.
    \item $x\in Q_i$ for $i=1,\ldots,k$; and $y\in Q_i$ for $i=k+1,\ldots, m$.
\end{itemize}
Since $I^n$ is Cohen-Macaulay, it is unmixed. Hence  $I^n = Q_1^n\cap\cdots\cap Q_k^n\cap Q_{k+1}^n \cap \cdots\cap Q_{m}^n$ by Lemma \ref{L1}.

Let $I'$ be the restriction of $I$ on $V(G)\setminus \{x\}$, then, 
by the proof of Lemma \ref{L1}, we get that
$$I(H_\omega)^n = {Q'}_1^n\cap\cdots\cap {Q'}_k^n$$
is an irredundant primary decomposition of $I(H_\omega)^n$.

Let $P_i = \sqrt{Q_i}$ and  $P'_i = \sqrt{Q'_i}$ for $i=1,\ldots,k$. Let $f$ be any monomial with $\supp(f) \subseteq V(G)\setminus \{x,y\}$ such that $f\notin I(H_\omega)^n$. Suppose $f\notin {Q'}_i^n$ for $i=1,\ldots,t$ and $f\in {Q'}_i^n$ for $i=t+1,\ldots,k$. In particular,
$$\sqrt{I(H_\omega)^n \colon f} = \sqrt{{Q'}_1^n\colon f}\cap \cdots\cap \sqrt{{Q'}_t^n\colon f} = {P'}_1\cap \cdots \cap {P'}_t.$$

Let $g = fy^{n\omega(xy)}$, then $g \notin Q_i^n$ for $i=1,\ldots,t$; and $g \in Q_i^n$ for $i=t+1,\ldots,m$. It follows that
$$I(G_\omega)^n : g = (Q_1^n \colon g) \cap \cdots \cap (Q_t^n \colon g),$$    
and so $$\sqrt{I(G_\omega)^n : g} = \sqrt{{Q}_1^n\colon g}\cap \cdots\cap \sqrt{{Q}_t^n\colon g} = {P}_1\cap \cdots \cap {P}_t = (x)+ {P'}_1\cap \cdots \cap {P'}_t.$$

Since $I(G_\omega)^n : g$ is Cohen-Macaulay by Lemma  \ref{lemCM}, so is $ (x)+ {P'}_1\cap \cdots \cap {P'}_t$. Therefore,
$\sqrt{I(H_\omega)^n : f}={P'}_1\cap \cdots \cap {P'}_t$ is Cohen-Macaulay.  It follows that $I(H_\omega)^n$ is Cohen-Macaulay again by Lemma \ref{lemCM}, as required.
\end{proof}

For a monomial $u$ and a variable $x$, we denote $\deg_x(u)=\max\{m: x^m|u\}$.

\begin{lem} \label{L4} Suppose that $I(G_\omega)^n$ is unmixed for some $n\geqslant 1$. If $G$ has a pendant edge  $xy$ with $\deg_G(y)=1$, then $\omega(xy) \geqslant \omega(xz)$ for all $z\in N_G(x)$.
\end{lem}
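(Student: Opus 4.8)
The plan is to argue by contradiction, using the fact (recorded already inside the proof of Lemma~\ref{L1}) that unmixedness forces $I(G_\omega)^n = I(G_\omega)^{(n)}$. First I would spell out why this equality holds: if $I(G_\omega)^n$ is unmixed then all its associated primes have a common height, so it can carry no embedded prime, since an embedded prime strictly contains a minimal prime and hence would have strictly larger height. Thus $\ass(R/I(G_\omega)^n)=\Min(I(G_\omega))$ and $I(G_\omega)^n=I(G_\omega)^{(n)}$. The strategy is then to assume $\omega(xy)<\omega(xz)$ for some $z\in N_G(x)$ and manufacture a single monomial lying in the symbolic power but not in the ordinary power, contradicting this equality.

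Write $b=\omega(xy)$ and $a=\omega(xz)$, so $a>b$, and set
$$f = x^{(n-1)a+b}\,z^{na}.$$
I would first check $f\notin I(G_\omega)^n$. Since $\supp(f)=\{x,z\}$, any minimal generator of $I(G_\omega)^n$ dividing $f$ is a product of $n$ edge-generators whose supports lie in $\{x,z\}$; the only edge inside $\{x,z\}$ is $xz$, so such a generator must be $(x^az^a)^n=x^{na}z^{na}$. But $\deg_x(f)=(n-1)a+b<na$ because $b<a$, so no generator divides $f$, whence $f\notin I(G_\omega)^n$.

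Next I would verify $f\in I(G_\omega)^{(n)}$ via the localization description of symbolic powers of monomial ideals: for a minimal vertex cover $C$ with prime $P_C=(v:v\in C)$, one has $f\in I(G_\omega)^nR_{P_C}\cap R$ exactly when there is a monomial $g_C$ with $\supp(g_C)\subseteq V(G)\setminus C$ and $fg_C\in I(G_\omega)^n$ (this is the colon characterization of $I_W$ recorded in the preliminaries, applied with $W=V(G)\setminus C$), and $f\in I(G_\omega)^{(n)}$ precisely when such a $g_C$ exists for every minimal cover. Because $xy$ is a pendant edge with $y$ a leaf, every minimal cover contains exactly one of $x,y$, which splits the check into two cases. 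If $x\in C$ (so $y\notin C$), take $g_C=y^{nb}$; then $(x^by^b)^n\mid fg_C$ since $\deg_x(f)=(n-1)a+b\ge nb$ (as $a\ge b$) and $\deg_y(fg_C)=nb$, so $fg_C\in I(G_\omega)^n$. If $x\notin C$, take $g_C=x^{n(a-b)}$, which is supported on $\{x\}\subseteq V(G)\setminus C$; then $\deg_x(fg_C)=na+(n-1)(a-b)\ge na$ and $\deg_z(fg_C)=na$, so $(x^az^a)^n\mid fg_C$ and $fg_C\in I(G_\omega)^n$. Hence $f\in I(G_\omega)^{(n)}\setminus I(G_\omega)^n$, contradicting $I(G_\omega)^n=I(G_\omega)^{(n)}$, and therefore $\omega(xy)\ge\omega(xz)$.

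I expect the main obstacle to be the calibration of the witness monomial $f$: the exponent $(n-1)a+b$ must be chosen so that $f$ simultaneously fails to enter $I(G_\omega)^n$ yet is pushed into it by a \emph{single-variable} multiplier for every minimal cover. Concretely, the two case-bounds $(n-1)a+b\ge nb$ (the $y$-direction) and $(n-1)(a-b)\ge 0$ (the $x$-direction) are what pin down this exponent, and getting both to hold together while keeping $\deg_x(f)<na$ is the delicate point; the remaining verifications are routine divisibility checks.
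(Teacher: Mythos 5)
Your proof is correct, but it takes a genuinely different route from the paper's. The paper works directly with the irredundant primary decomposition forced by unmixedness: it splits the height-unmixed components into those containing $x$ and those containing $y$ (using that a minimal cover contains exactly one of $x,y$), writes $I^n = ((x^p)+J_1)\cap((y^q)+J_2)$, pins down $p=q=n\omega(xy)$ from the minimal generator $(xy)^{n\omega(xy)}$, and then extracts the inequality by computing $\deg_x$ of the minimal generator $(xy)^{(n-1)\omega(xy)}(xz)^{\omega(xz)}$ expressed as an lcm of generators from the two groups. You instead pass from unmixedness to the equality $I(G_\omega)^n=I(G_\omega)^{(n)}$ (your justification that unmixedness excludes embedded primes is sound, since associated primes of a monomial ideal are face ideals and strict containment raises height), negate the conclusion, and exhibit the explicit witness $f=x^{(n-1)a+b}z^{na}$ lying in the symbolic but not the ordinary power; the cover-by-cover verification via the saturation description $I^n_{W}=I^n\colon\bigl(\prod_{v\in W}v\bigr)^{\infty}$ is exactly the localization fact recorded in the paper's preliminaries, and your two divisibility checks are correct. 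Interestingly, your technique is the one the paper itself deploys for Lemma \ref{twotimes} (producing a monomial in $I^{(n)}\setminus I^n$), just applied here instead of there. What each approach buys: the paper's argument is direct (no contradiction) and reuses the decomposition bookkeeping already set up for Lemmas \ref{L1} and \ref{L2}, but it leans on slightly delicate structural claims about how minimal generators of $I^n$ arise as lcms across the two component groups; yours is more self-contained and elementary --- only divisibility of monomials and the saturation description of symbolic powers --- at the cost of having to verify membership at every minimal cover, which goes through smoothly precisely because the pendant edge forces each minimal cover to contain exactly one of $x,y$.
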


\begin{proof} Let $I = I(G_\omega)$ and let $I =Q_1\cap \cdots\cap Q_{k} \cap Q_{k+1}\cap\cdots\cap Q_m \cap Q_{m+1}\cap \cdots\cap Q_s$ be an irredundant primary decomposition of $I$, where $\hgt(Q_i)=\hgt(I)$ for $i=1,\ldots,m$; and $\hgt(Q_i) > \hgt(I)$ for $i=m+1,\ldots, s$. Since $I^n$ is unmixed, we have
$$I^n = Q_1^n \cap \cdots \cap Q_m^n.$$

Let $P_i = \sqrt{Q_i}$ for $i=1,\ldots,m$, where $x\in P_i$ for $i=1,\ldots,k$, and $x\notin P_i$ for $i=k+1,\ldots,m$. Since $\ass(R/I^n) = \{P_1,\ldots,P_m\}$, it follows that $\ass(R/I^n) = \ass(R/I(G))$. Since $xy$ is a pendant edge of $G$, every associated prime ideal of $G$ contains either $x$ or $y$, but not both. Thus, $y\notin P_i$ for $i=1,\ldots,k$, and $y\in P_i$ for $i=k+1,\ldots,m$. 

For a monomial ideal $J$ of $R$, let $\G(J)$ be the set of its  minimal monomial generators. We have
$$I^n = (Q_1^n\cap\cdots\cap Q_k^n) \cap (Q_{k+1}^n\cap\cdots\cap Q_m^n)= ((x^p)+J_1) \cap ((y^q) + J_2)$$
where each monomial $f\in \G(J_1)$ has $\deg_x(f) < p$ and $f \nmid x^p$; and each monomial $g\in \G(J_2)$ has $\deg_y(g) < q$ and $g \nmid y^q$. Since $(xy)^{n\omega(xy)}$ is a monomial generator of $I^n$, it forces
$(xy)^{n\omega(xy)} = x^py^q$, and so $p = q= n\omega(xy)$.

Now for every $z\in N_G(x)\setminus\{y\}$. It is easy to check that $(xy)^{(n-1)\omega(xy)}(xz)^{\omega(xz)}$ is a monomial generator of $I^n$. It follows that 
$(xy)^{(n-1)\omega(xy)}(xz)^{\omega(xz)} = \lcm(f,g)$ for some $f\in \G((x^p)+J_1)$ and $g\in \G((y^q)+J_2)$. Note that $g$ does not contain  $x$, so 
$$\deg_x((xy)^{(n-1)\omega(xy)}(xz)^{\omega(xz)}) = \deg_x(\lcm(f,g)) = \deg_x(f)< p.$$
Thus $(n-1)\omega(xy)+\omega(xz)<n\omega(xy)$, and thus $\omega(xz)\leqslant \omega(xy)$, as required.
\end{proof}

If a graph $G$ has a perfect matching consisting of pendant edges, and $\omega$ is a weight function on $G$, then \cite[Lemma 5.3]{PS} gives a sufficient condition on $\omega$ for  $I(G_\omega)$ to be Cohen-Macaulay. We will generalize this for powers of $I(G_\omega)$.

For this we introduce the polarization of a monomial ideal (see \cite[Chapter I]{PV}). Let $m = x_1^{\alpha_1}x_2^{\alpha_2}\cdots x_d^{\alpha_d}$ in $R$. Let $S = K[x^i_j\mid 1\leqslant i\leqslant d, 1\leqslant j\leqslant N]$ for some $N\gg 0$. Then we associate to $m$ a squarefree monomial $m^{\pol}$ in $S$ as follows:
$$m^{\pol} = x^1_1x^1_2\cdots x^1_{\alpha_1} x^2_1x^2_2\cdots  x^2_{\alpha_2}\cdots x^d_1x^d_2\cdots x^d_{\alpha_d},$$
where each power $x_i^{\alpha_i}$ of a variable $x_i$ is replaced by the product $x^i_1x^i_2\cdots x^i_{\alpha_i}$. We call $m^{\pol}$ the polarization of $m$. Let $I=(m_1,\ldots,m_k)$ be a monomial ideal of $R$, where $\{m_1, \ldots , m_k\}$ is the set of minimal  monomial generators of $I$. Then we define the ideal $I^{\pol}$ of $S$ by $I^{\pol} = (m_1^{\pol}, \ldots, m_k^{\pol})$, which is called the polarization of $I$. The following lemma is an useful fact used in the paper.

\begin{lem}\label{lemPol} $I$ is Cohen-Macaulay if and only if $I^{\pol}$ is Cohen-Macaulay.
\end{lem}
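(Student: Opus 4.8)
The plan is to realize $S/I^{\pol}$ as a deformation of $R/I$ by a regular sequence of linear forms, after which the equivalence becomes a formal consequence of how $\depth$ and $\dim$ behave under quotienting by a regular sequence. Concretely, for each variable $x_i$ let $\alpha_i$ be the largest exponent of $x_i$ among the minimal generators of $I$, so that polarization introduces the variables $x^i_1,\ldots,x^i_{\alpha_i}$ in $S$, with $x^i_1$ playing the role of $x_i$. Set
$$\mathbf{z} = \{\, x^i_j - x^i_1 \mid 1\leqslant i\leqslant d,\ 2\leqslant j\leqslant \alpha_i \,\},$$
a collection of $\ell := \sum_i(\alpha_i-1)$ linear forms in $S$. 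First I would check that the substitution homomorphism $\subst\colon S\to R$ sending every $x^i_j\mapsto x_i$ carries $I^{\pol}$ onto $I$ and induces an isomorphism $S/(I^{\pol}+(\mathbf{z}))\cong R/I$; this is immediate from the definition of polarization, since $\subst$ returns each generator $m^{\pol}$ to $m$ and its kernel modulo $I^{\pol}$ is exactly $(\mathbf{z})$.

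The main step, and the genuine obstacle, is to prove that $\mathbf{z}$ is a regular sequence on $S/I^{\pol}$. I would order the forms and argue inductively that each $x^i_j - x^i_1$ is a nonzerodivisor on the successive quotient. The relevant structural feature is that, by construction of the polarization, the variables $x^i_j$ and $x^i_1$ never occur together in a single minimal generator of $I^{\pol}$; after quotienting by the earlier differences one is looking at a "partial polarization," whose associated primes are again generated by subsets of the variables, and one verifies that a difference of two distinct variables is not a zerodivisor on it. This is precisely the classical fact recorded in \cite[Chapter I]{PV}, that polarization is recovered by cutting down along the regular sequence $\mathbf{z}$, and I would cite it for the detailed nonzerodivisor verification while indicating the inductive mechanism above.

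Finally, granting that $\mathbf{z}$ is a regular sequence of length $\ell$ on $S/I^{\pol}$, both $\depth$ and $\dim$ drop by exactly $\ell$ upon passing to $S/(I^{\pol}+(\mathbf{z}))\cong R/I$. Consequently
$$\dim(S/I^{\pol})-\depth(S/I^{\pol}) = \dim(R/I)-\depth(R/I),$$
so one Cohen-Macaulay defect vanishes if and only if the other does. Hence $I^{\pol}$ is Cohen-Macaulay if and only if $I$ is Cohen-Macaulay, as claimed.
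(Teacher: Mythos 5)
Your overall route is precisely the one packaged in the result the paper invokes: the paper's entire proof of this lemma is the citation \cite[Theorem 21.10]{PV}, and what you describe --- realizing $R/I$ as $S/(I^{\pol}+(\mathbf{z}))$ with $\mathbf{z}$ a regular sequence of variable differences, then matching Cohen-Macaulay defects --- is the standard argument behind that theorem, with the genuinely hard step (regularity of $\mathbf{z}$ on $S/I^{\pol}$) deferred, as in the paper, to Peeva's book. Your derivation of the equivalence from the regular-sequence property is sound: each difference is a homogeneous nonzerodivisor, hence lies outside every associated (in particular every minimal) prime, so both $\depth$ and $\dim$ drop by exactly one at each step and the Cohen-Macaulay defect is preserved.

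However, the sentence you offer as ``the relevant structural feature'' is false, and it is worth flagging because it is presented as the engine of the key step. It is not true that $x^i_j$ and $x^i_1$ never occur together in a minimal generator of $I^{\pol}$: by the very definition of polarization they occur together whenever $\deg_{x_i}(m)\geqslant j$ for some minimal generator $m$ of $I$. For instance $I=(x^2)$ polarizes to $I^{\pol}=(x^1_1x^1_2)$, where both variables divide the unique generator, and yet $x^1_2-x^1_1$ is regular on $S/I^{\pol}$. The correct statement concerns associated primes, not generators: one must show that no associated prime of the successive partial polarizations contains both $x^i_1$ and $x^i_j$, so that the difference avoids all associated primes. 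Note also that ``a difference of two distinct variables is a nonzerodivisor'' is not automatic for squarefree monomial ideals --- $x_1-x_2$ is a zerodivisor on $K[x_1,x_2,x_3]/(x_1x_3,x_2x_3)$, since the associated prime $(x_1,x_2)$ contains both variables --- so this really does require the specific combinatorial structure of polarizations. Since you ultimately cite \cite[Chapter I]{PV} for the detailed verification, your proof rests on the same foundation as the paper's; just replace the erroneous heuristic by the correct one about associated primes.
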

\begin{proof}
    It follows from \cite[Theorem 21.10]{PV}.
\end{proof}

\begin{thm}\label{Pn} Suppose that  $G_\omega$ is a weighted graph such that its underlying graph $G$ has a perfect matching $x_1y_1, \ldots,x_ty_t$, where each $y_i$ is a leaf vertex. If there exists an integer $k\geqslant 1$ such that $\omega(x_iy_i)\geqslant k\omega(x_ix_j)$ for all $i=1,\ldots,t$ and $x_j\in N_G(x_i)$, then $I(G_\omega)^n$ is Cohen-Macaulay for all $n=1,\ldots,k$.
\end{thm}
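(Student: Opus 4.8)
The plan is to show directly that $\depth R/I(G_\omega)^n = t$ for every $1\le n\le k$, where $t$ is the size of the pendant perfect matching. Since the minimal primes of $I(G_\omega)^n$ are exactly those of $I(G)$, namely the ideals generated by the minimal vertex covers of the whiskered graph $G$ (each core vertex $x_i$ carries its own leaf $y_i$), and each such cover has cardinality $t$ — for each index $i$ precisely one of $x_i,y_i$ lies in the cover — we get $\dim R/I(G_\omega)^n=2t-t=t$, hence $\depth R/I(G_\omega)^n\le t$ automatically. So everything reduces to the lower bound $\depth R/I(G_\omega)^n\ge t$, and once this is proved the equality $\depth=\dim=t$ yields Cohen--Macaulayness (and, a posteriori, unmixedness, so that the no-embedded-prime hypothesis of Lemma~\ref{lemCM} is obtained for free).

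For the lower bound I would invoke Hochster's depth formula \cite[Theorem 7.1]{HT}, which gives $\depth R/I(G_\omega)^n=\min\{\depth R/\sqrt{I(G_\omega)^n:f}\mid f\notin I(G_\omega)^n\}$. The whole theorem then rests on the claim that for every monomial $f\notin I(G_\omega)^n$ with $n\le k$ one has $\depth R/\sqrt{I(G_\omega)^n:f}\ge t$. The value $f=1$ already produces $\sqrt{I(G_\omega)^n}=I(G)$, and $R/I(G)$ is Cohen--Macaulay of dimension $t$ because $G$ is whiskered; thus the minimum is attained at $t$, and the claim forces it to equal $t$.

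To establish the claim I would compute $\sqrt{I(G_\omega)^n:f}$ and recognize it as the Stanley--Reisner ideal of a Cohen--Macaulay complex of dimension at least $t-1$, proceeding by induction on $t$. Fix the pendant edge $x_ty_t$ and split the analysis according to $\deg_{x_t}(f)$ and $\deg_{y_t}(f)$. The point is that $y_t$ occurs only in the generator $(x_ty_t)^{\omega(x_ty_t)}$, and the hypothesis gives $\omega(x_ty_t)\ge k\,\omega(x_tx_j)\ge n\,\omega(x_tx_j)$ for every core-neighbour $x_j$ of $x_t$ (using $n\le k$); this surplus in the pendant weight ensures that removing the $x_t$-part of $f$ from the core-edge generators can never exhaust the pendant generator. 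Consequently each radical colon either acquires $x_t$ or $y_t$ as a degree-one generator — a cone, whose quotient is Cohen--Macaulay of the same depth as a smaller instance — or restricts, through Lemmas~\ref{delete} and~\ref{localization}, to the analogous radical colon for $I\big((G\setminus\{x_t,y_t\})_\omega\big)^{n'}$ with $n'\le n\le k$ on the smaller whiskered graph $G\setminus\{x_t,y_t\}$, which satisfies the same weight hypothesis with the same $k$; the induction hypothesis then applies. For the remaining ``mixed'' colons, whose radical is no longer the edge ideal of a whiskered graph, polarization (Lemma~\ref{lemPol}) is the tool: one unravels the weighted generators into a squarefree ideal and reads off Cohen--Macaulayness of the associated complex.

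The principal obstacle is exactly this claim. For $n\ge 2$ the colon ideals $I(G_\omega)^n:f$ are in general not powers of weighted edge ideals, so the induction cannot remain inside the original class, and the radicals $\sqrt{I(G_\omega)^n:f}$ can be Stanley--Reisner ideals of genuinely higher-dimensional complexes rather than whiskered edge ideals. Handling all of them uniformly — and checking in every branch that the dimension of the resulting Cohen--Macaulay quotient never drops below $t$ — is where the inequality $\omega(x_iy_i)\ge k\,\omega(x_ix_j)$ must be used at full strength: it is precisely what keeps every core vertex equipped with its whisker throughout the range $n\le k$, and one should expect the bound $n\le k$ to be sharp for this reason (compare Lemma~\ref{L4}).
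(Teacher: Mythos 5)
Your reduction is sound as far as it goes: the minimal primes of $I(G_\omega)^n$ are those of $I(G)$, every minimal vertex cover of a whiskered graph has cardinality $t$, so $\dim R/I(G_\omega)^n = t$, and by the depth formula of \cite[Theorem 7.1]{HT} everything indeed comes down to showing $\depth R/\sqrt{I(G_\omega)^n\colon f}\geqslant t$ for every monomial $f\notin I(G_\omega)^n$. But that claim is precisely where your proposal stops being a proof. You describe an induction on $t$ with a case split on $\deg_{x_t}(f)$ and $\deg_{y_t}(f)$, assert that each branch is either a cone or reduces to a smaller whiskered instance, and then concede that the ``mixed'' colons leave the class of (powers of) weighted edge ideals, to be handled by polarization in some unspecified way. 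You even name this claim ``the principal obstacle.'' A proof whose central step is acknowledged as an obstacle, with no argument showing how the weight inequality $\omega(x_iy_i)\geqslant k\,\omega(x_ix_j)$ actually controls the radicals $\sqrt{I(G_\omega)^n\colon f}$ for $n\geqslant 2$, has a genuine gap: for such $n$ these radicals can fail to be edge ideals of whiskered graphs at all, and nothing in your sketch bounds their depth. The appeal to Lemma~\ref{lemPol} here is also misplaced, since polarization does not interact in any straightforward way with taking colons and then radicals.

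The paper's proof avoids colon ideals entirely and is worth contrasting with your plan. It introduces the Artinian ideal $J=\bigl(\{u_i^{2m_i}\}\cup\{(u_iu_j)^{\omega_{ij}}\}\bigr)$ in $K[u_1,\ldots,u_t]$, obtained from $I(G_\omega)$ by identifying $y_i$ with $x_i$, and then sets up a polarization of $R$ in which the variables coming from powers of $x_i$ and of $y_i$ interleave inside a single block $x^i_1,\ldots,x^i_{2nm_i}$. The hypothesis $\omega(x_iy_i)\geqslant k\,\omega(x_ix_j)$ enters exactly once, to verify that for any product $f$ of $n\leqslant k$ generators of $I(G_\omega)$, the polarization $f^{\pol}$ coincides with $g^{\pol}$, where $g$ is the corresponding $u$-monomial; hence $(I(G_\omega)^n)^{\pol}=(J^n)^{\pol}$. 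Since $J^n$ is Artinian it is Cohen--Macaulay, and Lemma~\ref{lemPol} transfers this to $I(G_\omega)^n$. This single global identification replaces the entire family of depth estimates your approach would require, which is why the paper's argument closes and yours does not.
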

\begin{proof} For simplicity, we set $I = I(G_\omega)$, $m_i = \omega(x_iy_i)$ for all $i=1,\ldots,t$, and $\omega_{ij}=\omega(x_ix_j)$ for all $x_ix_j\in E(G)$. Then
$$I=(\{(x_iy_i)^{m_i}\mid i=1,\ldots,t\}\cup \{(x_ix_j)^{\omega_{ij}}\mid x_ix_j\in E(G)\}).
$$

Let $J$ be a monomial ideal in the polynomial  ring $K[u_1,\ldots,u_t]$ defined by
$$J=(\{u_i^{2m_i}\mid i=1,\ldots,t\}\cup \{(u_iu_j)^{\omega_{ij}}\mid x_ix_j\in E(G)\}),$$
i.e., $J$ is obtained from $I$ by replacing $y_i$ and $x_i$ by $u_i$ for all $i$. Note that $J$ is Artinian.

Fix an arbitrary integer $n=1,\ldots,k$. To prove that $I^n$ is Cohen-Macaulay, we use the technique of polarization. Let $S$ be the polynomial ring
$$S = K[x^1_1,\ldots,x^1_{2nm_1},\ldots, x^t_1,\ldots,x^t_{2nm_t}]$$
of variables $x^i_j$ for $i=1,\ldots,t$ and $j=1,\ldots,2nm_i$.

Then, for $1\leqslant m\leqslant nm_i$, we define the polarizations of $x_i^m$ and $y_i^m$ in $R$ to be monomials in $S$  as follows. Let $m = m_is+r$ where $0\leqslant r <  m_i$. Define
$$(x_i^m)^{\pol} = (x^i_{2sm_i+1}\cdots x^i_{2sm_i+r})\prod_{p=0}^{s-1} \prod_{j=1}^{m_i}x^i_{2pm_i+j},$$
and
$$(y_i^m)^{\pol} = (x^i_{(2s+1)m_i+1}\cdots x^i_{(2s+1)m_i+r})\prod_{p=0}^{s-1} \prod_{j=1}^{m_i}x^i_{(2p+1)m_i+j}.$$

Next, we define the polarization of $u_i^m$ with $1\leqslant m \leqslant 2nm_i$ to be a monomial in $S$ by
$$(u_i^m)^{\pol} = x^i_1\cdots x^i_m.$$

Using the condition that $\omega(x_iy_i) \geqslant k\omega(x_ix_j)$ for each $i=1,\ldots, t$ and $x_j\in N_G(x_i)$, we can verify the following claim: Let $f$ be a product of $n$ monomial generators of $I$ and let $g$ be a monomial of $K[u_1,\ldots,u_t]$ obtained from $f$ by replacing $x_i$ and $y_i$ by $u_i$  for each $i=1,\ldots,t$. Then $f^{\pol} = g^{\pol}$.

This claim yields $(I^n)^{\pol} = (J^n)^{\pol}$. Since $J$ is Artinian, so is $J^n$, and so  $J^n$ is Cohen-Macaulay. So $(J^n)^{\pol}$ is Cohen-Macaulay and  so $I^n$ is also Cohen-Macaulay by Lemma \ref{lemPol}, as required.
\end{proof}

\section{Second powers of edge ideals of weighted bipartite graphs}
\label{sec:Second}

In this section, we will characterize the Cohen-Macaulayness of $I(G_\omega)^2$, where $G_\omega$ is a weighted very well-covered graph. Note that if $I(G_\omega)^2$ is Cohen-Macaulay, then $I(G)=\sqrt{I(G_\omega)^2}$  is also Cohen-Macaulay by \cite[Theorem 2.6]{HTT}. 

Thus, in this section we will  always consider $G$ to be a Cohen-Macaulay very well-covered graph.  By \cite[Theorem 3.6]{CRT} we can partition $V(G)$ into $V(G) = X \cup Y$ where $X$ is a minimal vertex cover and $Y$ is a maximal independent set  such that  the vertices of  $G$ are labeled such that the following condition  holds.

$(*)$  $X=\{x_1,\ldots,x_t\}$ and $Y=\{y_1,\ldots,y_t\}$ 

such that
\begin{enumerate}
    \item [$(1*)$] $x_iy_i\in E(G)$ for all $i=1,\ldots,t$;
    \item [$(2*)$] if $x_iy_j\in E(G)$, then $i \leqslant j$; and
    \item [$(3*)$] if $x_iy_j\in E(G)$, then $x_i x_j \notin E(G)$; and
    \item [$(4*)$] if $x_iy_k, x_ky_j\in E(G)$ for distinct indices $i,j,k$, then $x_iy_j\in E(G)$.
    \item [$(5*)$] if $x_iy_k, x_kx_j\in E(G)$ for distinct indices $i,j,k$, then $x_ix_j\in E(G)$.
\end{enumerate}

If $G$ is a bipartite graph, we can assume that $Y$ is a maximal independent set of $G$ such that $(X,Y)$ is a bipartition of $G$ (see \cite{HH}). An example of such a graph is shown in  Figure \ref{wbg}, where we set $\omega_{ij} = \omega(x_iy_j)$ for all $x_iy_j\in E(G)$.
\begin{figure}[ht]
\includegraphics[scale=0.45]{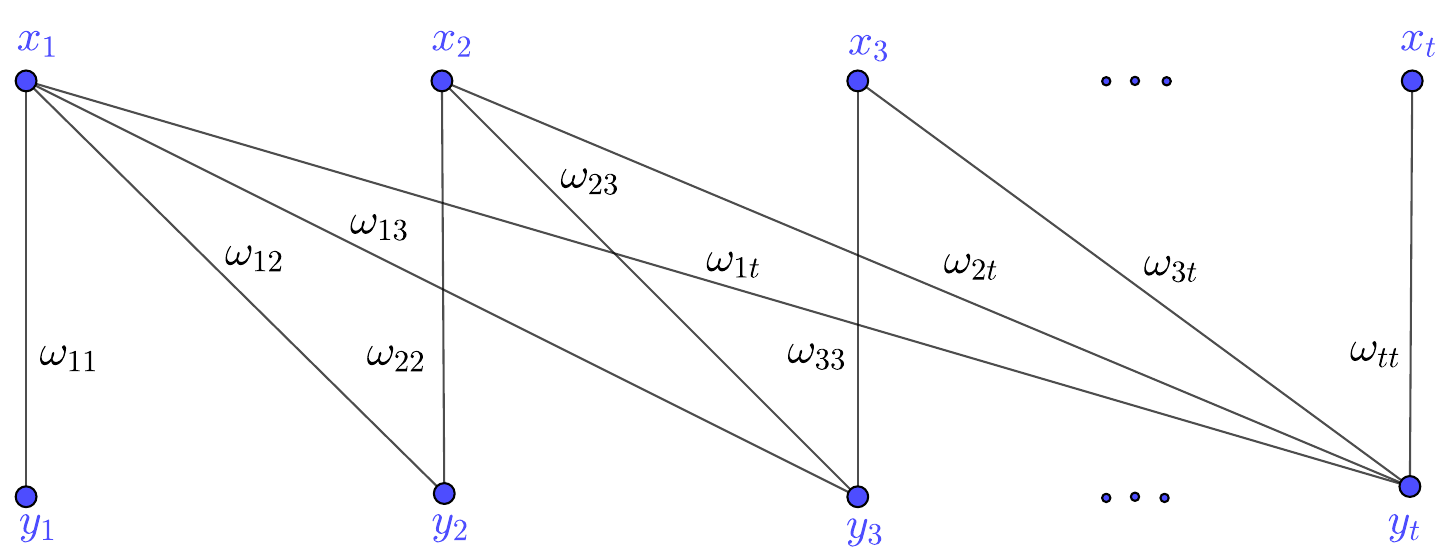}\\
\medskip
\caption{The weighted bipartite graph.}
\label{wbg}
\end{figure}

\begin{lem}\label{cmCover} Every associated prime ideal of $I(G)$ contains either $x_k$ or $y_k$, but not both, for every $k=1,\ldots,t$.
\end{lem}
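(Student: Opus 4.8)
The plan is to translate the statement about associated primes into a statement about minimal vertex covers, and then exploit the perfect matching $x_1y_1,\ldots,x_ty_t$ together with the very well-covered hypothesis. Recall from the formula quoted in the introduction that $\ass(R/I(G))$ consists exactly of the ideals $(v : v\in C)$ as $C$ ranges over the minimal vertex covers of $G$. Hence such a prime contains $x_k$ (resp. $y_k$) precisely when $x_k\in C$ (resp. $y_k\in C$), and the claim is equivalent to showing that every minimal vertex cover $C$ of $G$ contains exactly one of $x_k,y_k$ for each $k=1,\ldots,t$.

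First I would record the two inputs I need. By condition $(1*)$ the edges $x_1y_1,\ldots,x_ty_t$ all lie in $E(G)$, and since the vertices $x_1,\ldots,x_t,y_1,\ldots,y_t$ are pairwise distinct and exhaust $V(G)$, these $t$ edges form a perfect matching $M$ of $G$. On the other hand, since $G$ is very well-covered with $|V(G)|=2t$, every maximal independent set has cardinality $t$; as the complement of a minimal vertex cover is a maximal independent set, every minimal vertex cover $C$ satisfies $|C|=2t-t=t$.

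Next I would run the counting argument. Because $M$ is a matching, its $t$ edges are pairwise disjoint, so any vertex cover $C$ must meet each of them and, since distinct edges of $M$ share no endpoint, covering these $t$ edges requires at least $t$ distinct vertices of $C$; thus $|C|\geqslant t$, with equality forcing $C$ to contain exactly one endpoint of each edge of $M$. For a minimal vertex cover we have $|C|=t$ by the previous step, so $C$ contains exactly one of $\{x_k,y_k\}$ for every $k$. In particular $C$ cannot contain both, which gives the ``but not both'' assertion, and it must contain at least one, which gives existence. This completes the argument.

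I do not expect a genuine obstacle here: the content is entirely the observation that $\{x_iy_i\}_{i=1}^{t}$ is a perfect matching and that very well-coveredness pins the size of every minimal vertex cover to $t$. The only point requiring a little care is to apply the correspondence between associated primes and minimal vertex covers to $I(G)$, the squarefree edge ideal, rather than to $I(G_\omega)$, so that the quoted formula is legitimately available; conditions $(2*)$--$(5*)$ are not needed for this particular statement and enter only in the subsequent structural results.
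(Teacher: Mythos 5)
Your proof is correct, but it takes a genuinely different route from the paper's. The paper argues structurally on the complementary set $T=V(G)\setminus C$: if both $x_k$ and $y_k$ lay outside the maximal independent set $T$, then maximality produces a neighbor $x_i\in T$ of $y_k$ and a neighbor of $x_k$ in $T$ (of the form $x_j$ or $y_j$), and properties $(5*)$ resp. $(4*)$ then force an edge between two vertices of $T$, contradicting independence; the ``not both'' half is immediate since $x_ky_k\in E(G)$ and $T$ is independent. Your argument instead bypasses $(2*)$--$(5*)$ entirely and uses only $(1*)$ together with the very well-covered hypothesis: minimal vertex covers are complements of maximal independent sets, hence all have exactly $t$ elements, and since the $t$ matching edges $x_1y_1,\ldots,x_ty_t$ are pairwise disjoint, a $t$-element cover must contain exactly one endpoint of each. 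Both arguments are complete; yours is shorter and more elementary, reducing the lemma to a counting fact about perfect matchings, while the paper's has the feature of never invoking the cardinality of covers, relying only on the combinatorial conditions in the labeling $(*)$ --- the same machinery that the subsequent lemmas of that section (e.g.\ Lemmas \ref{leaf2} and \ref{bp}) exercise repeatedly, which is presumably why the authors phrase the proof that way.
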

\begin{proof} Let $\p$ be an associated prime ideal of $I(G)$ such that $\p = (C)$ for some minimal cover $C$ of $G$. Let $T= V(G)\setminus C$, then  $T$ is a maximal independent set of $G$. If $x_k,y_k\notin T$,  we have two cases:

{\it Case $1$}: $x_iy_k\in E(G)$  and  $x_kx_j\in E(G)$ for some $x_i,x_j\in T$. By Property $(5*)$, we have $x_ix_j\in E(G)$,  a contradiction.

{\it Case $2$}: $x_iy_k\in E(G)$  and  $x_ky_j\in E(G)$ for some $x_i,y_j\in T$.  By Property $(4*)$, we have $x_iy_j\in E(G)$,  a contradiction.

So either $x_k$ or $y_k$ is in $T$, but not both, since $T$ is an independent set. This shows that $C$ contains either $x_k$ or $y_k$, but not both, and the lemma follows.
\end{proof}

\begin{lem}\label{leaf2} $G$ has at least two leaves.
\end{lem}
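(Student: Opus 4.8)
The plan is to exploit the labeling from $(*)$, in particular properties $(1*)$ and $(2*)$, to locate leaf vertices among the $y_i$'s at the extreme end of the ordering. The key structural fact is that $(2*)$ says $x_iy_j\in E(G)$ forces $i\leqslant j$; equivalently, $y_j$ can only be adjacent to vertices $x_i$ with $i\leqslant j$, while $(3*)$ guarantees that when $x_iy_j\in E(G)$ we have $x_ix_j\notin E(G)$. Since $Y$ is an independent set (it is a maximal independent set, or one side of the bipartition), $y_j$ has no neighbors in $Y$, so $N_G(y_j)\subseteq X$. First I would examine $y_t$, the last vertex in the ordering. By $(1*)$ it is adjacent to $x_t$, so $y_t$ is not isolated. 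I would then argue that $y_1$ is forced to be a leaf: by $(2*)$, any neighbor $x_i$ of $y_1$ must satisfy $i\leqslant 1$, hence $i=1$, so $N_G(y_1)=\{x_1\}$ and $\deg_G(y_1)=1$.

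That produces one leaf; to produce a second one I would turn to $x_t$, the last cover vertex. By $(1*)$, $x_ty_t\in E(G)$. I would show $N_G(x_t)=\{y_t\}$, again by $(2*)$ together with $(3*)$: any neighbor of $x_t$ of the form $y_j$ forces $t\leqslant j$, hence $j=t$; and any neighbor of $x_t$ of the form $x_j$ in $X$ would, after pairing with the edge $x_ty_t$, contradict $(3*)$ or $(5*)$. More precisely, if $x_tx_j\in E(G)$ then consider the edge $x_jy_j$ from $(1*)$ together with $x_jx_t$; property $(5*)$ applied to the pair $x_jy_j, x_jx_t$ (reading off the shared index $j$) yields an edge among the $x$'s incident to $y_t$ or $x_t$, which I would push to a contradiction with $(3*)$ via the edge $x_ty_t$. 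So $\deg_G(x_t)=1$, giving a second leaf $x_t$.

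The two candidate leaves produced this way are $y_1$ (adjacent only to $x_1$) and $x_t$ (adjacent only to $y_t$), and since $x_t\neq y_1$ as they lie in $X$ and $Y$ respectively, these are genuinely distinct vertices. Hence $G$ has at least two leaves.

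The step I expect to be the main obstacle is ruling out extra neighbors of $x_t$ in $X$ cleanly: the ordering conditions $(2*)$–$(5*)$ are phrased asymmetrically between $X$ and $Y$, and property $(5*)$ mixes an $x$-$y$ edge with an $x$-$x$ edge, so I would need to track the indices carefully and verify that no edge $x_tx_j$ with $j<t$ can coexist with the full package of covering conditions. If a direct index chase proves delicate, the fallback is to invoke Lemma \ref{cmCover}: since every associated prime contains exactly one of $x_k,y_k$, the vertex $x_t$ lies in the cover $X$ forced by minimality, and I can use unmixedness of $G$ (very well-coveredness) together with $(2*)$ to argue that $x_t$ cannot have a second neighbor without violating the equal-cardinality condition on maximal independent sets.
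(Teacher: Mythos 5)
Your identification of $y_1$ as a leaf is correct and matches the paper: by $(2*)$ any neighbor $x_i$ of $y_1$ has $i\leqslant 1$, and $Y$ is independent, so $N_G(y_1)=\{x_1\}$. The gap is in the second half: it is simply false that $x_t$ is always a leaf under condition $(*)$. Condition $(*)$ does not make $X$ an independent set, so edges $x_tx_j$ can genuinely occur, and your attempted application of $(5*)$ to rule them out is invalid: $(5*)$ requires \emph{three distinct} indices $i,j,k$ with $x_iy_k,x_kx_j\in E(G)$, whereas the pair you feed it, $x_jy_j$ and $x_jx_t$, involves only the two indices $j$ and $t$ (the edge $x_jy_j$ has $i=k=j$), so $(5*)$ yields nothing. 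Concrete counterexamples satisfying $(*)$: the path of length $3$ used throughout the paper (Figure \ref{pqTree}), labeled $x_1=a$, $y_1=x$, $x_2=b$, $y_2=y$, where $x_1x_2\in E(G)$ and $x_t=x_2$ has degree $2$; or the graph of Proposition \ref{triangle} with $t=3$ (a triangle $x_1x_2x_3$ with pendant vertices $y_1,y_2,y_3$), which is Cohen-Macaulay, very well-covered, satisfies $(*)$, and has $\deg_G(x_3)=3$. In both cases the second leaf is some $y_k$, not $x_t$. Your fallback (Lemma \ref{cmCover} plus unmixedness) cannot repair this, since these counterexamples are genuinely Cohen-Macaulay and very well-covered; no additional hypothesis will make $x_t$ a leaf.

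The missing idea is a case distinction on $x_1$, which is exactly how the paper proceeds. If some $y_k$ has $x_1y_k\notin E(G)$, take $k$ minimal with this property; then $y_k$ must be a leaf, because a neighbor $x_i$ of $y_k$ with $1<i<k$ would give $x_1y_i\in E(G)$ by minimality of $k$, and then $x_1y_k\in E(G)$ by $(4*)$, a contradiction. Otherwise $x_1y_k\in E(G)$ for all $k$, and only in this case does your conclusion hold: $(3*)$ gives $x_1x_k\notin E(G)$ for all $k\geqslant 2$, and for $2\leqslant i<j\leqslant t$ an edge $x_ix_j$ together with $x_1y_i\in E(G)$ would force $x_1x_j\in E(G)$ by $(5*)$ (here the three indices $1,i,j$ really are distinct), contradicting $(3*)$; hence $x_t$ has no neighbor in $X$, and by $(2*)$ its only neighbor in $Y$ is $y_t$, so $x_t$ is a leaf. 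Your argument thus covers exactly one of the two cases, and the case it misses is the one occurring in the simplest examples.
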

\begin{proof} The case $t=1$ is obvious, so  we assume $t\geqslant 2$. Since $y_1$ is a leaf, it remains to show that $G$ has another leaf. We consider the following two cases:

(1) If there exists some  $y_k$ such that $x_1y_k \notin E(G)$, then we take $k$ to be a minimal index among such indices. In this case, we claim that $y_k$ is a leaf, so that   $y_1$ and $y_k$ are desired leaves.

In fact, if  $y_k$ is not a leaf, then $x_iy_k\in E(G)$ for some index $1<i< k$. It follows that $x_1y_i\in E(G)$ by the minimality of $k$, so $x_1y_k\in E(G)$ according to Property $(4*)$, which is a contradiction.

(2) If $x_1y_k\in E(G)$ for all $k=1,\ldots,t$, then, by Property $(3*)$, we get $x_1x_k\notin E(G)$ for all $k=2,\ldots,t$.
In this case, we claim that: $x_ix_j\notin E(G)$ for all $2\le i<j\le t$. 
Indeed, if there exist some $2\le i<j\le t$ such that $x_ix_j\in E(G)$. Since $x_1y_i\in E(G)$, we can obtain that $x_1x_j\in E(G)$ by  Property $(5*)$, which contradicts Property $(3*)$. So $x_t$ is a leaf, as required.
\end{proof}

\begin{lem}\label{bp} If $G$ has exactly two leaves, then it is bipartite.
\end{lem}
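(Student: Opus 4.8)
My plan is to recast bipartiteness as a $2$-colouring problem governed by the partial order that $(2*)$ and $(4*)$ secretly define. Since $Y$ is independent, $G$ is bipartite exactly when it admits a proper $2$-colouring; and because each matching edge $x_iy_i$ forces $x_i,y_i$ to get opposite colours, such a colouring is the same datum as a map $c\colon\{1,\dots,t\}\to\{0,1\}$ (colour $x_i$ by $c_i$ and $y_i$ by $1-c_i$) subject to exactly two constraint families: $c_i=c_j$ whenever $x_iy_j\in E(G)$ with $i\ne j$, and $c_i\ne c_j$ whenever $x_ix_j\in E(G)$. I would first record that the relation defined by $i\preceq j\iff i=j$ or $x_iy_j\in E(G)$ is a partial order refining $\le$ (reflexivity is clear, antisymmetry comes from $(2*)$, transitivity from $(4*)$), that by $(3*)$ no $\prec$-comparable pair spans an edge inside $X$, and that the leaves of $G$ are precisely the $y_i$ with $i$ minimal together with the $x_i$ with $i$ maximal and incident to no edge inside $X$.

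Next I would use the hypothesis of exactly two leaves. As $y_1$ comes from the always-minimal index $1$, the number of minimal elements is $1$ or $2$ (three or more would force three or more leaves). If there is a single minimal element, the descent step below shows $G$ has no edge inside $X$, so $(X,Y)$ is itself a bipartition and we are done. The substantial case is therefore two minimal elements, say $1$ and $k$; here $y_1,y_k$ already exhaust the leaves, so $G$ has no leaf in $X$, i.e.\ \emph{every maximal element carries an edge inside $X$}.

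The engine is a descent lemma read off from $(5*)$: if $x_ax_b\in E(G)$ and $a'\preceq a$ is minimal, then applying $(5*)$ to $x_{a'}y_a$ and $x_ax_b$ gives $x_{a'}x_b\in E(G)$; iterating on both ends shows every edge inside $X$ descends to an edge between two minimal indices, hence to $\{1,k\}$. The crux, which I expect to be the main obstacle, is the claim that \emph{no index lies $\prec$-above both $1$ and $k$}. I would argue by contradiction: choose a maximal $M$ above both; by the previous paragraph $M$ is incident to some $x_Mx_j\in E(G)$, and two applications of $(5*)$ (through $1\prec M$ and $k\prec M$) yield $x_1x_j,x_kx_j\in E(G)$; then descending $x_1x_j$ on the $j$-side produces a minimal index below $j$ that must equal $k$, whence $k\prec j$, contradicting $x_kx_j\in E(G)$ by way of $(3*)$. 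This is exactly the point where the absence of a third leaf, i.e.\ every maximal element having an edge inside $X$, is indispensable.

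Granting the claim, every index lies above exactly one of $1,k$, so I may set $c_i=0$ or $1$ according to whether $1\preceq i$ or $k\preceq i$; this is well defined. Monotonicity of ``the minimal index below $i$'' makes $c$ constant along $\prec$, so the first constraint family holds, while the descent of every edge inside $X$ to $\{1,k\}$ puts its two endpoints above different minimal indices, so $c$ separates them and the second family holds. Hence $c$ gives a proper $2$-colouring and $G$ is bipartite. The only routine residue is checking that $\preceq$ is a partial order and that the colouring is proper on each of the three edge types.
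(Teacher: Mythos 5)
Your proof is correct, and it takes a genuinely different route from the paper's. The paper argues by cases on where the two leaves sit: if $y_1$ is the only leaf in $Y$, it shows directly that $X$ is independent; otherwise the leaves are $y_1,y_m$ and it splits further on whether $x_1x_m\in E(G)$, invoking well-coveredness (via \cite[Lemma 1]{FHN}) to force $X$ to be independent when $x_1x_m\notin E(G)$, and proving its Claims $2$--$3$ to exhibit the bipartition $(N_G(x_1),N_G(x_m))$ when $x_1x_m\in E(G)$. Your argument replaces all of this with the poset $\preceq$ and the descent lemma extracted from $(5*)$: every edge inside $X$ descends to an edge joining two \emph{distinct} minimal indices. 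This single lemma simultaneously disposes of the paper's case $(1)$ (one minimal element, hence no edge inside $X$), of its subcase (i) (two non-adjacent minimal elements, hence again no edge inside $X$ --- so you never need the well-covered property or \cite{FHN} at all), and drives the $2$-colouring in the remaining case, where your crux claim (no index lies above both minimal elements) plays the role of the paper's Claims $2$--$3$, and your colour classes agree in substance with the paper's sets $A$ and $B$. The checks you deferred as routine do all go through: antisymmetry and transitivity of $\preceq$ come from $(2*)$ and $(4*)$ exactly as you say, and in each application of $(5*)$ the required distinctness of the three indices follows from $(3*)$ (for instance, in the descent step $a'=b$ would give $x_by_a\in E(G)$ against $x_ax_b\in E(G)$, and $b'=a'$ would give $x_{a'}y_b\in E(G)$ against $x_{a'}x_b\in E(G)$). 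As for what each approach buys: the paper's is elementary adjacency chasing but imports an external structural fact about well-covered graphs, while yours is self-contained given condition $(*)$ and the leaf count, treats the cases uniformly, and the order-theoretic viewpoint (leaves of $G$ correspond to extremal elements of the poset) makes the mechanism of the proof transparent.
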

\begin{proof} The case where $t = 1$ is trivial, since in this case $G$ has exactly one edge. If $t=2$, then $G$ is a path of length $3$, and it is obvious that it is bipartite.
Suppose $t\geqslant 3$. It follows  from Property $(2*)$ that $y_1$ is a leaf.  Consider the following two cases:

(1) If $y_j$ is not a leaf of $G$ for all  $j\geqslant 2$, then $x_1y_j\in E(G)$ for all $2\le j\le t$. Indeed,
 if $x_1y_j\notin E(G)$ for some $2\le j\le t$, then we see  from the proof of Lemma \ref{leaf2} that $y_j$ is a leaf of $G$,   a contradiction.

Now we  show that in this case  $X$ is an independent set. Suppose  that this is not the case. Then there exist  $p,q$ with $1\le p<q\le t$ such that  $x_px_q\in E(G)$, so $x_1x_q\in E(G)$  by Property $(5*)$, since $x_1y_p\in E(G)$. This contradicts Property $(3*)$, so $X$ is an independent set of $G$. This implies that $G$ is bipartite.

\medskip
(2) Suppose there exists some $m$ with $2\leqslant  m\leqslant t$ such that $y_m$ is a leaf in $G$, then the following claim holds:

{\it Claim $1$}:  $x_1y_j\in E(G)$ or $x_my_j\in E(G)$ for all $j\notin \{1,m\}$.  If this is not the case, let $j$ be the minimum index  such that $x_1y_j,x_my_j \notin E(G)$. Since
both $y_1$ and $y_m$ are leaves and $G$ has only two leaves,   $y_j$ is not a leaf. Therefore, by Property $(2*)$,
 there exists some $x_iy_j\in E(G)$ with $i<j$.
 By the minimality of $j$, $x_1y_i\in E(G)$ or  $x_my_i\in E(G)$. We can assume that $x_1y_i\in E(G)$, but then $x_1y_j\in E(G)$ by Property $(4*)$, a contradiction.

We consider the following two subcases:

(i) If $x_1x_m\notin E(G)$, then we have $G\setminus N_G[x_1,x_m]$ is an induced subgraph of $G[V(G)\setminus \{x_1,x_m\}]$. On the other hand, since $G$ is well-covered, by \cite[Lemma 1]{FHN} we have
$$\alpha(G\setminus N_G[x_1,x_m]) = t-2=|X|-2.$$
It forces $X\setminus \{x_1,x_m\}$ to be an independent set of $G$ and $N_G[x_1,x_m] \cap (X\setminus \{x_1,x_m\}) = \emptyset$. This forces $X$ to be an independent set of $G$, so $G$ is bipartite.

\medskip
(ii) If $x_1x_m\in E(G)$. First, we will  prove the  following two claims:

{\it Claim $2$}: $x_1y_j\notin E(G)$ or  $x_my_j\notin E(G)$  for all $j\notin \{1,m\}$. On the contrary, assume that  there exists some $j\notin \{1,m\}$ such that $x_1y_j, x_my_j\in E(G)$. Let $j$ be the maximum index among such indices. Since $x_j$ is not a leaf, we have either $x_jy_q\in E(G)$ for $j<q$ or $x_ix_j\in E(G)$ for $i\ne j$. In the first case,  by Property $(4*)$, we have $x_1y_q,x_my_q\in E(G)$, which contradicts the maximality of $j$.  Hence $x_ix_j\in E(G)$ for some $i\ne j$. By Property $(3*)$, one has $i\notin \{1,m\}$. On the other hand, either $x_1y_i\in E(G)$ or $x_my_i\in E(G)$  by  Claim $1$, so $x_1x_j\in E(G)$ or $x_mx_j\in E(G)$ by Property $(5*)$. This contradicts Property $(3*)$.  Hence $x_1y_j\notin E(G)$ or  $x_my_j\notin E(G)$  for all $j\notin \{1,m\}$.

{\it Claim $3$}:  $x_1x_j\in E(G)$  or  $x_mx_j\in E(G)$ for all $j\notin \{1,m\}$, but  not both. In fact, this is not the case,  let $j$ be the maximal index such that $x_1x_j,x_jx_m \notin E(G)$. Since $x_j$ is not a leaf, we have either $x_jy_k\in E(G)$ for some  $j<k$, or $x_jx_s\in E(G)$ for  $s\notin \{1,j,m\}$. In the first case, we can obtain that $x_1y_k\in E(G)$ or $x_my_k\in E(G)$   by  Claim $1$.  If $x_1y_k\in E(G)$, then $x_1x_k\notin E(G)$ by Property $(3*)$. So $x_kx_m\in E(G)$ by the maximality of $j$. It follows that  $x_jx_m\in E(G)$ by Property $(5*)$.
 If $x_my_k\in E(G)$, then  $x_1x_j\in E(G)$ by similar arguments as in the case $x_1y_k\in E(G)$. These two cases contradict the hypothesis that
$x_1x_j,x_jx_m \notin E(G)$.
For the second case,  by similar arguments as for the first case, we can get that then $x_1x_j\in E(G)$ if $x_1y_s\in E(G)$, and $x_mx_j\in E(G)$
if   $x_my_s\in E(G)$. This also contradicts the hypothesis that $x_1x_j,x_jx_m \notin E(G)$. Therefore, $x_1x_j\in E(G)$  or  $x_mx_j\in E(G)$ for all $j\notin \{1,m\}$.
Since $x_1y_j\in E(G)$ or $x_my_j\in E(G)$  for all $j\notin \{1,m\}$, $x_1x_j\in E(G)$ and  $x_mx_j\in E(G)$ do not simultaneously hold by Property $(3*)$, as claimed.
  
Now we return  to the proof of the lemma. Let $A=\{v\mid vx_1\in E(G)\}$ and $B = \{v\mid vx_m\in E(G)\}$. By Claims $1$, $2$ and $3$, we conclude that $V(G) = A \cup B$ and $A \cap B = \emptyset$. It remains to show that $A$ and $B$ are independent sets of $G$. If $A$ is not an independent set of $G$, then we have two possible cases:

{\it Case $1$}: If $x_ix_j\in E(G)$ for some $x_i,x_j\in A$, then $x_i,x_j\notin B$, since $A \cap B=\emptyset$. It follows that $i\ne m$ and $j\ne m$. Indeed, if $i=m$, then $x_mx_j\in E(G)$.  By the definition of $B$, $x_j\in B$, so $x_j\in A\cap B$, a contradiction.
Similarly, $j\ne m$. On the other hand, since $x_1x_i,x_1x_j\in E(G)$, we have $x_1y_i,x_1y_j\notin E(G)$ by  Property $(3*)$. This forces $x_my_i,x_my_j\in E(G)$ by  Claims $1$ and $2$. Again, using $x_ix_j\in E(G)$ and $x_my_j\in E(G)$, we can see that $x_ix_m\in E(G)$. It follows that $x_i\in A\cap B$, which is impossible.

{\it Case $2$}: If $x_iy_j\in E(G)$ for some $x_i,y_j\in A$, then $y_j\notin B$ since $A \cap B=\emptyset$.  By the definition of $B$, $i\ne m$. Since $x_i,y_j\in A$, $x_1x_i\in E(G)$, $x_1y_j\in E(G)$. It follows from Property $(3*)$ that $i\ne j$.
On the other hand, since $x_1x_i\in E(G)$, we have $x_1y_i\notin E(G)$ by  Property $(3*)$. This forces $x_my_i\in E(G)$ by  Claims $1$ and $2$. Again, using $x_iy_j\in E(G)$, we can see that $x_my_j\in E(G)$. It follows that $y_j\in A\cap B$, which is impossible.

So two cases are impossible. This means that $A$ is an independent set of $G$. Similarly, we can prove that $B$ is also an independent set of $G$. Hence, $G$ is a bipartite graph, as required.
\end{proof}

We now characterize the weight function $\omega$ such that $I(G_\omega)^2$ are Cohen-Macaulay. If $t=1$, then $G$ has only one edge, and any weight function is possible. If $t = 2$, we get the following result.

\begin{lem} \label{twotimes} Let $G_\omega$ be a weighted path  of length $3$ with $E(G) = \{ax,ab,by\}$. If $I(G_\omega)^n$ is Cohen-Macaulay for some $n\geqslant 2$, then $\min\{\omega(ax), \omega(by)\} \geqslant 2 \omega(ab)$.
\end{lem}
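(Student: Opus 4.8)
The plan is to argue by contradiction, using the symbolic power of $I:=I(G_\omega)$ together with the depth formula \cite[Theorem 7.1]{HT}. Write $p=\omega(ax)$, $q=\omega(ab)$, $r=\omega(by)$ and $R=K[x,a,b,y]$, so that $I=((xa)^p,(ab)^q,(by)^r)$. Here $G$ is the path $x$--$a$--$b$--$y$, whose minimal vertex covers give $\Min(I)=\{(a,b),(a,y),(x,b)\}$ and $\dim R/I^n=2$. The configuration is symmetric under the exchange $(x,a)\leftrightarrow(y,b)$, which interchanges $p$ and $r$ and fixes $q$; hence I may assume $p\leqslant r$, so that $\min\{p,r\}=p$, and it suffices to prove $p\geqslant 2q$. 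Suppose to the contrary that $p<2q$ (recall $p\geqslant q$ by Lemma \ref{L4}). The first step is to pin down the minimal primary components of $I^n$: by definition $I^{(n)}=\bigcap_{\p\in\Min(I)} I^nR_\p\cap R$, and computing each localization with Lemma \ref{localization}(2) gives $I^{(n)}=Q_1\cap Q_2\cap Q_3$, where $Q_1=(a^p,(ab)^q,b^r)^n$ is primary to $(a,b)$, $Q_2=(a^q,y^r)^n$ is primary to $(a,y)$, and $Q_3=(x^p,b^q)^n$ is primary to $(x,b)$. I will use repeatedly that, for two variables $u,v$, a monomial lies in $(u^s,v^t)^n$ precisely when $\lfloor \deg_u/s\rfloor+\lfloor\deg_v/t\rfloor\geqslant n$. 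The argument then splits according to the size of $r$, the point being that the two regimes fail Cohen-Macaulayness for different reasons.

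If $r\geqslant 2q$, I would show that $I^n$ is not even unmixed. Consider $f=a^{p}b^{r(n-1)}y^{r(n-1)}$. Divisibility of $f$ by the generator $a^p b^{r(n-1)}$ of $Q_1$ gives $f\in Q_1$; the inequalities $\lfloor p/q\rfloor+(n-1)\geqslant n$ and $\lfloor r(n-1)/q\rfloor\geqslant 2(n-1)\geqslant n$ (using $p\geqslant q$, $r\geqslant 2q$ and $n\geqslant 2$) give $f\in Q_2$ and $f\in Q_3$, so $f\in I^{(n)}$. On the other hand $f\notin I^n$: in any factorization of $f$ into $n$ generators of $I$ one has $i_1=0$ since $\deg_x f=0$, and $\deg_a f=p<2q$ forces at most one factor $(ab)^q$; then either $n$ factors $(by)^r$ demand $\deg_y f\geqslant rn$, or one $(ab)^q$ together with $n-1$ factors $(by)^r$ demand $\deg_b f\geqslant q+r(n-1)$, and both inequalities fail. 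Hence $I^n\ne I^{(n)}$, so $I^n$ has an embedded prime and is not unmixed, contradicting the Cohen-Macaulayness of $I^n$.

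If instead $r<2q$, then $I^n$ is still unmixed, so $I^n=I^{(n)}=Q_1\cap Q_2\cap Q_3$, and I would produce a colon ideal of small depth. Take $f=a^{p+(n-2)q}b^{r+(n-2)q}$, which equals the generator $(a^p)((ab)^q)^{n-2}(b^r)$ of $Q_1$, so $f\in Q_1$. Since $\deg_a f=qn-(2q-p)<qn$ and $\deg_b f=qn-(2q-r)<qn$, we obtain $f\notin Q_2$ and $f\notin Q_3$. Therefore $I^n:f=(Q_2:f)\cap(Q_3:f)$ and $\sqrt{I^n:f}=(a,y)\cap(x,b)=(ax,ab,xy,by)$, the edge ideal of the $4$-cycle on $x,a,b,y$; its independence complex is a pair of disjoint edges, so $R/\sqrt{I^n:f}$ is disconnected and $\depth R/\sqrt{I^n:f}=1$. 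By the depth formula \cite[Theorem 7.1]{HT}, $\depth R/I^n\leqslant 1<2=\dim R/I^n$, again contradicting Cohen-Macaulayness. In either case we reach a contradiction, so $p\geqslant 2q$, that is, $\min\{\omega(ax),\omega(by)\}\geqslant 2\omega(ab)$.

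The main obstacle is precisely that a single witness monomial will not handle both ranges: when both pendant weights lie below $2q$ the power $I^n$ remains unmixed, and the failure of Cohen-Macaulayness can only be detected through a colon ideal that localizes to the well-covered but non-Cohen-Macaulay $4$-cycle; whereas when one pendant weight is at least $2q$ the obstruction already appears in the strict inclusion $I^n\subsetneq I^{(n)}$. Thus the conceptual crux is to identify the correct witness in each regime, after which the remaining work is the bookkeeping that verifies the stated membership and non-membership relations, namely the floor inequalities for $Q_2,Q_3$ and the forced shape of any factorization of the witness into generators of $I$.
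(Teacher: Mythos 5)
Your proposal is correct, and while it draws on the same toolkit as the paper, it is organized along a genuinely different route. In your notation ($p=\omega(ax)$, $q=\omega(ab)$, $r=\omega(by)$), the paper first proves the strict inequality $\omega(ab)<\omega(by)$ by ruling out the all-equal case $p=q=r$ with a colon-ideal witness $a^{q}b^{(n-1)q}$ whose radical is $(a,y)\cap(x,b)$; it then assumes $p<2q$, localizes at $\{y\}$ via Lemma \ref{localization}, and exhibits the monomial $(a^{q}b^{q})^{n-2}a^{p}b^{\max\{r,2q\}}$ lying in $I^{(n)}_{\{y\}}\setminus I^{n}_{\{y\}}$, contradicting unmixedness of the localized ideal. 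You never localize: you split instead on the size of $r$, detecting failure of unmixedness directly in the full ring when $r\geqslant 2q$ (your witness $f=a^{p}b^{r(n-1)}y^{r(n-1)}\in I^{(n)}\setminus I^{n}$) and failure of depth when $r<2q$ (your witness $f=a^{p+(n-2)q}b^{r+(n-2)q}$, whose colon radical is the $4$-cycle ideal $(a,y)\cap(x,b)$, combined with the Takayama depth formula). Thus both arguments rest on exactly the same two mechanisms---the strict inclusion $I^{n}\subsetneq I^{(n)}$, and a colon ideal with disconnected independence complex---but you assign them to complementary weight regimes, and your second case subsumes the paper's preliminary all-equal claim rather than treating it separately. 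What the paper's localization buys is lighter bookkeeping (after inverting $y$ only two primary components remain); what your version buys is a self-contained argument in the full ring and a transparent dichotomy between the two ways Cohen-Macaulayness can fail. Two harmless expository gaps: the identification $Q_3=(x^p,b^q)^n$ silently uses $r\geqslant q$ (true, since $r\geqslant p\geqslant q$ in your reduction), and it is the independence complex of $\sqrt{I^n\colon f}$, not the quotient ring itself, that is disconnected---though your parenthetical about the pair of disjoint edges makes clear this is what you meant.
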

\begin{proof} For simplicity, let $I = I(G_\omega)$ and let $p = \omega(ax)$, $k = \omega(ab)$, $q = \omega(by)$ (see Figure \ref{pqTree}). Then 
$$I(G_\omega) = ((ax)^p,(ab)^k,(by)^q)$$
and we need to prove $2k\leqslant\min\{p,q\}$.

First, since $I^n$ is unmixed, we have $k \leqslant \min\{p,q\}$ by Lemma \ref{L4}.

\begin{figure}[ht]
\includegraphics[scale=0.5]{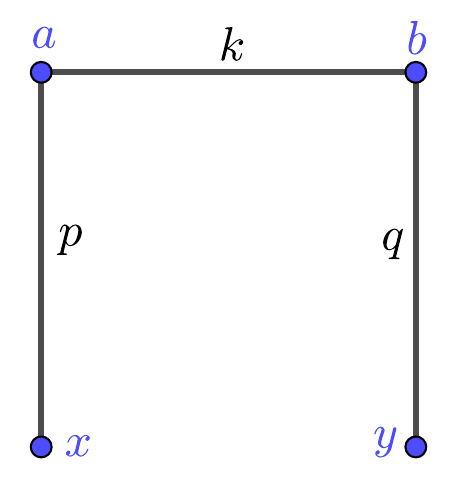}\\
\medskip
\caption{The weighted path of length $3$.}
\label{pqTree}
\end{figure}    

We can assume that $p \leqslant q$, so that $k \leqslant p\leqslant q$. First,  We  claim that $k < q$. Assume that this is not the case, so that $p = k = q$. Hence,
$$I = ((ab)^k,(ax)^k,(by)^k)= (a^k,b^k)\cap (a^k,y^k) \cap (b^k,x^k).$$

Since $I^n$ is unmixed, we have
$$I^n = (a^k,b^k)^n\cap (a^k,y^k)^n \cap (b^k,x^k)^n.$$
Let $f = a^kb^{(n-1)k}$. Using the fact that $n\geqslant 2$, we have $f\in (a^k,b^k)^n$, $f\notin (a^k,y^k)^n$ and $f\notin (b^k,x^k)^n$. It follows that
$$\sqrt{I^n \colon f} = (a,y) \cap (b,x).$$
Note that $\sqrt{I^n \colon f}$ is not Cohen-Macaulay. This contradicts Lemma \ref{lemCM}. Therefore  we have  $k < q$.

We now proceed to prove the lemma. It suffices to show that $p \geqslant 2k$. Suppose, on the contrary that, that $p < 2k$. Since $I^n$ is Cohen-Macaulay,  the localization $I^n_{\{y\}}$ of $I^n$ with respect to $\{y\}$ is also Cohen-Macaulay   by Lemma \ref{localization}. 
 In particular, $I^n_{\{y\}} = I^{(n)}_{\{y\}}$, where $I^{(n)}_{\{y\}}$ is the $n$-th symbolic power of $I_{\{y\}}$. Note that $I_{\{y\}} = (a^px^p, a^kb^k,b^q) = (a^p, a^kb^k,b^q) \cap (b^k, x^p)$, so
$$I^{(n)}_{\{y\}} = (a^p, a^kb^k,b^q)^n \cap (b^k, x^p)^n.$$

Let $f = (a^kb^k)^{n-2}a^pb^s$ where $s = \max\{q,2k\}$, then $f \in (a^p, a^kb^k,b^q)^n \cap (b^k, x^p)^n$. Thus $f\in I^{(n)}_{\{y\}}$ . 

We claim that $f\notin I^n_{\{y\}}$. Indeed, suppose on the contrary that $f\in I^n_{\{y\}}$. Then $f=g(a^kb^k)^{n-u} b^{qu}$, where $g$ is a monomial in $R$ and $0\leqslant u\leqslant n$. It follows that
$$(n-2)k + p \geqslant (n-u)k \text{ and } (n-2)k+s\geqslant (n-u)k + qu,$$
and so $(u-2)k + p \geqslant 0 \text{ and } (u-2)k+s\geqslant qu$.

Since $p < 2k$, the inequality $(u-2)k + p \geqslant 0$ yields $u\geqslant 1$. Note that $k< q$, we obtain
\begin{align*}
q &\leqslant (u-2)k +s -(u-1)q \leqslant(u-2)k+s-(u-1)k=s-k\\
&= \max\{2k-k, q-k\} =\max\{k,q-k\}.
\end{align*}
This is impossible because $1\leqslant k < q$. Hence $f \notin I^n_{\{y\}}$, and hence $I^n_{\{y\}} \ne I^{(n)}_{\{y\}}$, a contradiction. Therefore, $p \geqslant 2k$, and the proof is complete.
\end{proof}

\medskip

\begin{lem} \label{reduceEdges} If  $I(G_\omega)^2$ is unmixed, then  $I((G\setminus \{x_k,y_k\})_\omega)^2$ is also unmixed for all $k=1,\ldots,t$.   
\end{lem}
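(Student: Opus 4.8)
The plan is to realize $I((G\setminus\{x_k,y_k\})_\omega)^2$ as a \emph{restriction} of $I(G_\omega)^2$ and then to track the heights of the associated primes through that restriction. Set $I=I(G_\omega)$, $H=G\setminus\{x_k,y_k\}$, and $W=V(G)\setminus\{x_k,y_k\}$. The first step is the identification $I|_W=I(H_\omega)$: a minimal generator $(uv)^{\omega(uv)}$ of $I$ has $\supp((uv)^{\omega(uv)})=\{u,v\}\subseteq W$ precisely when $uv$ is an edge of the induced subgraph $H$, and on such edges the weight function is unchanged. Consequently Lemma \ref{delete}(2) yields $I(H_\omega)^2=(I|_W)^2=(I^2)|_W$, which reduces the problem to analysing the restriction of $I^2$.

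Next I would expand $I^2$ via its primary decomposition. Since $I^2$ is unmixed, exactly the argument in the proof of Lemma \ref{L1} gives $I^2=Q_1^2\cap\cdots\cap Q_m^2$, where the $Q_i$ are the height-$\hgt(I)$ primary components of $I$ and $P_i=\sqrt{Q_i}=(C_i)$ for minimal vertex covers $C_i$ of $G$. Because $G$ is very well-covered, every minimal vertex cover has cardinality $t$, so $\hgt(P_i)=|C_i|=t$ for all $i$. Applying Lemma \ref{delete}(1) repeatedly and then Lemma \ref{delete}(2) to this intersection gives the decomposition $I(H_\omega)^2=(Q_1|_W)^2\cap\cdots\cap(Q_m|_W)^2$.

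The heart of the argument is to understand each restricted component $Q_i|_W$. Since $Q_i$ is $P_i$-primary with $P_i=(C_i)$ a monomial prime, every minimal generator of $Q_i$ is supported on $C_i$, and $Q_i$ contains the (minimal) pure power of each variable of $C_i$. By Lemma \ref{cmCover}, $C_i$ contains exactly one of $x_k,y_k$, so restricting to $W$ discards precisely the generators involving that single variable, leaving a monomial ideal whose generators are supported on $\bar{C}_i:=C_i\setminus\{x_k,y_k\}$ and which still contains the pure powers of all variables of $\bar{C}_i$. Hence $Q_i|_W$ is $(\bar{C}_i)$-primary, and therefore so is $(Q_i|_W)^2$, with $\hgt(\sqrt{(Q_i|_W)^2})=|\bar{C}_i|=t-1$.

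Finally I would conclude unmixedness. The displayed decomposition exhibits $I(H_\omega)^2$ as a (possibly redundant) intersection of primary ideals in which \emph{every} component has radical of height exactly $t-1$; since the associated primes of a cyclic module are always contained in the set of radicals of the components of any primary decomposition, every element of $\ass(R/I(H_\omega)^2)$ has height $t-1$, so $I(H_\omega)^2$ is unmixed. The step I expect to require the most care — and the one where the hypotheses genuinely enter — is the primality claim for $Q_i|_W$ together with the exact count $|\bar{C}_i|=t-1$: this relies on Lemma \ref{cmCover} (exactly one of $x_k,y_k$ lies in each cover) and on very-well-coveredness ($|C_i|=t$), without which the restricted components could have differing heights. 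The degenerate case $t=1$, where $H$ has no edges and the claim is vacuous, should be disposed of separately.
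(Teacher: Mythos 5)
Your proposal is correct and follows essentially the same route as the paper's own proof: restrict to $W=V(G)\setminus\{x_k,y_k\}$ via Lemma \ref{delete}, write $I^2=Q_1^2\cap\cdots\cap Q_s^2$ using unmixedness, and invoke Lemma \ref{cmCover} to see that each component's height drops by exactly one under restriction. The only difference is that you spell out details the paper leaves implicit — notably why each $Q_i|_W$ is again primary with radical $(C_i\setminus\{x_k,y_k\})$ — which is a useful clarification but not a different argument.
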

\begin{proof} Let $I = I(G_\omega)$ and $J=I((G\setminus \{x_k,y_k\})_\omega)$. Since $I^2$ is unmixed, it has the irredundant primary decomposition as
$$ I^2 = Q_1^2\cap Q_2^2\cap\cdots\cap Q_s^2,$$
where each $Q_i$ is a primary monomial ideal and $\p_i = \sqrt{Q_i}$ is the associated prime ideal of $I(G)$. 

Let $W = V(G)\setminus \{x_k,y_k\}$, then  $J = I|_W$ is the restriction of $I$ on $W$. It follows that
\begin{equation} \label{J2EQ}
J^2 = (Q_1|_W)^2\cap (Q_2|_W)^2\cap\cdots\cap (Q_s|_W)^2.    
\end{equation}

On the other hand, for each $i$, by Lemma \ref{cmCover}, $\p_i$ contains either $x_k$ or $y_k$, but not both. It follows that $\hgt(Q_i|_W) = \hgt(Q_i)-1$. Combining this fact with Eq. $(\ref{J2EQ})$, we conclude that $J^2$ is unmixed, as required.
\end{proof}

\begin{lem}\label{TwoQ} Suppose further that $X$ is an independent set of $G$. If  $I(G_\omega)^2$ is Cohen-Macaulay, then the following results hold:
\begin{enumerate}
    \item If  $x_iy_j\in E(G)$ with $i<j$, then $2\omega(x_iy_j) \leqslant \min\{\omega(x_iy_i), \omega(x_jy_j)\}$.
    \item If $x_iy_k$, $x_ky_j\in E(G)$ with  $i<k<j$, then $2\omega(x_iy_j)\leqslant \min\{\omega(x_iy_k), \omega(x_ky_j)\}$.
\end{enumerate}
\end{lem}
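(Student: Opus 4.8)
The plan is to prove both statements by localizing at a suitable single vertex and then reducing to the already-established path case, Lemma \ref{twotimes}. Since $X$ is an independent set, $G$ is bipartite with bipartition $(X,Y)$, so the only edges are of the form $x_ay_b$. For part (1), suppose $x_iy_j\in E(G)$ with $i<j$. The idea is to localize $I(G_\omega)^2$ at the set $W = V(G)\setminus\{x_i,x_j,y_i,y_j\}$ (equivalently, invert the product of those variables) and use Lemma \ref{localization}(2),(3): the localization $\bigl(I(G_\omega)^2\bigr)_W = \bigl(I(G_\omega)_W\bigr)^2$ is again Cohen-Macaulay. The first task is to identify $I(G_\omega)_W$ explicitly as the weighted edge ideal of an induced-type subgraph on the four vertices $\{x_i,x_j,y_i,y_j\}$: the surviving generators are exactly those whose support lies in these four variables, namely $(x_iy_i)^{\omega(x_iy_i)}$, $(x_jy_j)^{\omega(x_jy_j)}$, and $(x_iy_j)^{\omega(x_iy_j)}$, together with any of $(x_iy_i)$-type edges among them. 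Using Properties $(2*)$ and $(3*)$ (so that $x_jy_i\notin E(G)$ since $j>i$ would be needed, and $X$ independent kills $x_ix_j$), this localized graph is precisely the weighted path $y_i\,x_i\,y_j\,x_j$ of length $3$, with middle edge $x_iy_j$.

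Once that identification is made, I would invoke Lemma \ref{twotimes} directly: the localized ideal is $I(H_\omega)^2$ for the weighted path $H$ with $E(H)=\{x_iy_i,\,x_iy_j,\,x_jy_j\}$ and middle edge $x_iy_j$, and it is Cohen-Macaulay, so by Lemma \ref{twotimes} (with $n=2$) we get $2\omega(x_iy_j)\le \min\{\omega(x_iy_i),\omega(x_jy_j)\}$, which is exactly the claim. The only subtlety here is checking that no \emph{extra} generators survive the localization that would spoil the path structure — in particular that there is no edge $x_iy_k$ or $x_jy_k$ forcing additional support, which is handled because localizing removes all variables outside the four-element set and all such mixed generators contain a variable in $W$ and hence disappear.

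For part (2), the approach is entirely parallel but the localizing set and the target path differ. Given $x_iy_k,\,x_ky_j\in E(G)$ with $i<k<j$, Property $(4*)$ already guarantees $x_iy_j\in E(G)$, so the generator $(x_iy_j)^{\omega(x_iy_j)}$ is present. Here I would localize at $W' = V(G)\setminus\{x_i,\,y_k,\,x_k,\,y_j\}$, keeping the four vertices $x_i,y_k,x_k,y_j$. The surviving generators supported on these four variables are $(x_iy_k)^{\omega(x_iy_k)}$, $(x_ky_j)^{\omega(x_ky_j)}$, and $(x_iy_j)^{\omega(x_iy_j)}$; note $x_iy_i$ and $x_jy_j$ are gone since $y_i,x_j\notin W'$-complement, and $x_ky_k$ is gone since neither... (one must verify $x_kx_i,\,x_ky_k$ contribute nothing, using $X$ independent and the labeling). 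This again yields a weighted path of length $3$ whose end edges are $x_iy_k$ and $x_ky_j$ and whose middle edge is $x_iy_j$, and applying Lemma \ref{twotimes} gives $2\omega(x_iy_j)\le\min\{\omega(x_iy_k),\omega(x_ky_j)\}$.

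The main obstacle I anticipate is the bookkeeping in the localization step: one must argue carefully that $I(G_\omega)_{W}$ is \emph{exactly} the weighted edge ideal of the intended length-$3$ path and contains no stray generators, since $I_W = IR_W\cap R$ can in principle pick up generators beyond the naive restriction. The clean way to control this is to observe that localizing inverts all variables outside the four chosen vertices, so any minimal generator of $I(G_\omega)$ involving a vertex of $W$ becomes a unit times a lower-degree monomial and is absorbed; what remains are precisely the edge-generators whose \emph{both} endpoints lie in the four-element set. Verifying that the remaining three generators form the stated path—and in particular that the middle edge is $x_iy_j$ in each case—relies essentially on Properties $(2*)$--$(4*)$ together with the hypothesis that $X$ is independent, and this combinatorial check is the only real content beyond the mechanical application of Lemmas \ref{localization} and \ref{twotimes}.
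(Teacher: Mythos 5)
Your reduction breaks at the localization step, and it breaks already in part (1). In this paper's convention, $I_W=IR_W\cap R=I\colon f^\infty$ with $f=\prod_{z\in W}z$, i.e.\ the variables \emph{in} $W$ are the ones inverted. With your choice $W=V(G)\setminus\{x_i,y_i,x_j,y_j\}$ and $t\geqslant 3$, any matching edge $x_ky_k$ with $k\notin\{i,j\}$ has \emph{both} endpoints in $W$, so $(x_ky_k)^{\omega(x_ky_k)}$ divides a power of $f$; hence $1\in I\colon f^\infty$ and $\bigl(I(G_\omega)^2\bigr)_W=R$ is the unit ideal. Nothing survives and no inequality can be extracted. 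Even apart from this, your claim that generators meeting $W$ are ``absorbed'' is false: a generator $(x_iy_b)^{\omega(x_iy_b)}$ with $y_b\in W$ localizes to the pure power $x_i^{\omega(x_iy_b)}$, which does \emph{not} lie in the ideal generated by $(x_iy_i)^{\omega(x_iy_i)},(x_iy_j)^{\omega(x_iy_j)},(x_jy_j)^{\omega(x_jy_j)}$, so a localized ideal of this kind is never just the weighted path ideal, and Lemma \ref{twotimes} (stated for the path ideal on the nose) cannot be invoked. The paper itself shows this phenomenon: in the proof of Lemma \ref{twotimes} the localization $I_{\{y\}}$ acquires the pure power $b^q$, and in Case $3$ of the paper's proof of the present lemma the localization at $\{y_1,x_t\}$ acquires $x_1^{\omega_{11}}$ and $y_t^{\omega_{tt}}$, which the argument must track explicitly.

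The operation you actually want --- keep exactly the generators supported on $\{x_i,y_i,x_j,y_j\}$ --- is the restriction $I|_{\{x_i,y_i,x_j,y_j\}}$, not the localization, and there is no lemma (in the paper or in general) asserting that restriction to an arbitrary vertex subset preserves Cohen--Macaulayness; it is false in general (whisker every vertex of a $4$-cycle: the whiskered graph is Cohen--Macaulay, but the induced $4$-cycle is unmixed and not Cohen--Macaulay). The paper's only tool of this kind is Lemma \ref{L2}, which deletes the support vertex of a \emph{pendant} edge, and even that requires a genuine primary-decomposition argument. This is why the paper proves the lemma by induction on $t$: it deletes leaves via Lemma \ref{L2} whenever some $x_py_p$ with $p\notin\{1,t\}$ is pendant, and in the remaining rigid case ($x_1y_p,\,x_py_t\in E(G)$ for all $p$) it passes to the $6$-vertex induced subgraph $H$ using the unmixedness statement of Lemma \ref{reduceEdges}, localizes only at $\{y_1,x_t\}$, computes the two primary components $Q_1,Q_2$ of $J=I(H_\omega)_{\{y_1,x_t\}}$ explicitly, and tests the monomials $x_1^{\omega_{11}}y_t^{\omega_{kt}}$ and $x_1^{\omega_{1k}}y_t^{\omega_{tt}}$ against $J^2=Q_1^2\cap Q_2^2$ to force the inequalities. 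Your proposal contains none of this; the ``combinatorial check'' you flag as the only real content is not where the difficulty lies.
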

\begin{proof} By our assumption, $G$ is a bipartite graph with bipartition $(X,Y)$. We prove the statement by induction on $t$. If $t = 2$, then it follows from Lemma \ref{twotimes}.

Suppose $t\geqslant 3$.  If $i  \ne 1$, let $H = G\setminus x_1$. Since $x_1y_1$ is a pendant edge with $\deg_G(y_1)=1$, we have $I(H_\omega)^2$ is Cohen-Macaulay by Lemma \ref{L2}. Applying the induction hypothesis to $H_\omega$, we obtain  the desired inequalities. Similarly, if $j\ne t$, then the desired inequalities follow by considering the weighted graph $(G\setminus y_t)_\omega$. 

This argument also shows that all the inequalities hold if either $i\ne 1$ or $j \ne t$. Let $\omega_{ij} = \omega(x_iy_j)$ for all $x_iy_j\in E(G)$. It remains to prove the case $i=1$ and $j=t$, i.e., we will prove that 
\begin{equation}\label{PR1}
2\omega_{1t} \leqslant \min\{\omega_{11},\omega_{tt}\} \text { if } x_1y_t\in E(G),
\end{equation}
and
\begin{equation}\label{PR2}
2\omega_{1t}\leqslant \min\{\omega_{1k}, \omega_{kt}\} \text{ if } x_1y_k, x_ky_t \in E(G) \text { with } 1<k<t.
\end{equation}

We consider the following  three cases:

    {\it Case $1$}: If there exists an edge $x_py_p$ such that $x_1y_p\notin E(G)$, then we can assume that $p$ is the smallest integer such that $x_1y_p\notin E(G)$. We then claim that $y_p$ is a leaf. Indeed, if $y_p$ is not a leaf, then $x_qy_p\in E(G)$ for some  $q < p$. By the minimality of $p$, we see that  $x_1y_q\in E(G)$, so $x_1y_p\in E(G)$ by  Property $(4*)$, a contradiction. So $y_p$ is a leaf of $G$. Then, by considering the weighted graph $(G\setminus x_p)_\omega$, we  obtain the desired inequalities.

    {\it Case $2$}: If there exists an edge $x_py_p$ such that $x_py_t\notin E(G)$, then we can assume that $p$ is the largest integer such that $x_py_t\notin E(G)$. We then claim that $x_p$ is a leaf. Indeed, if $x_p$ is not a leaf, then $x_py_q\in E(G)$ for some  $p<q<t$.  By the maximality of $p$, we have  $x_qy_t\in E(G)$, so  $x_py_t\in E(G)$ by  Property $(4*)$, a contradiction. So $x_p$ is a leaf of $G$. Considering the weighted graph $(G\setminus y_p)_\omega$, we get the desired inequalities.

    {\it Case $3$}: If $x_py_t, x_1y_p \in E(G)$ for all $p=2,\ldots, t-1$. Fix an integer $k$ such that $2\leqslant k \leqslant t-1$. By applying Lemma \ref{reduceEdges} repeatedly, we obtain that $I(H_\omega)^2$ is unmixed, where $H = G[\{x_1,y_1,x_k,y_k,x_t,y_t\}]$ is the induced subgraph of $G$ on the set $\{x_1,y_1,x_k,y_k,x_t,y_t\}$ (see Figure \ref{wbh}).
    \begin{figure}[ht]
\includegraphics[scale=0.5]{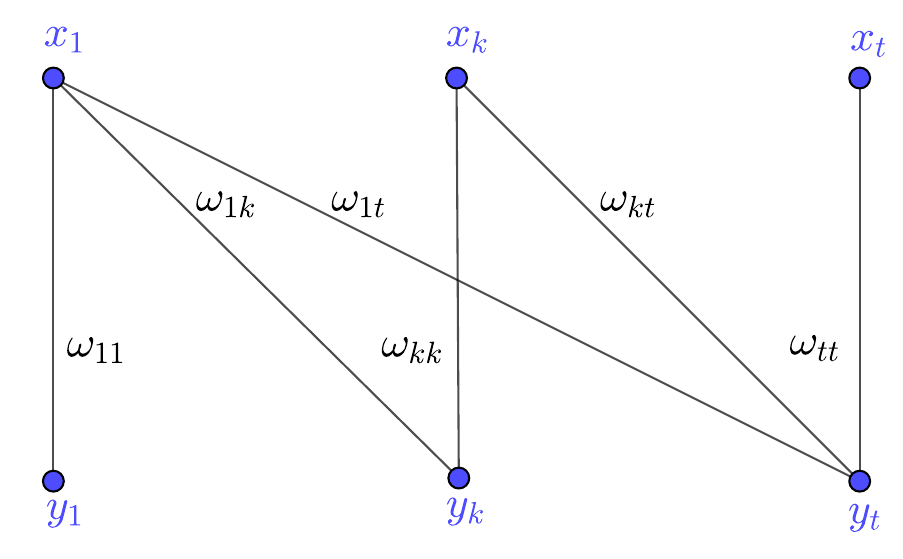}\\
\medskip
\caption{The weighted bipartite graph $H_\omega$.}
\label{wbh}
\end{figure}

Since $1<k<t$, by the induction hypothesis, we get
\begin{equation}\label{IFS}
    2\omega_{1k} \leqslant \min\{\omega_{11}, \omega_{kk}\} \text{ and } 2\omega_{kt} \leqslant \min\{\omega_{kk}, \omega_{tt}\}.
\end{equation}

Let $W = \{y_1,x_t\}$ and let $J = I(H_\omega)_W$ be the localization of $I(H_\omega)$ with respect to
$W$. Then $J$ is Cohen-Macaulay and 
\begin{equation}\label{FJ2}
J = (x_1^{\omega_{11}}, (x_1y_k)^{\omega_{1k}}, (x_1y_t)^{\omega_{1t}},(x_ky_k)^{\omega_{kk}}, (x_ky_t)^{\omega_{kt}}, y_t^{\omega_{tt}}).    
\end{equation}
So  $\sqrt{J} = (x_1,x_ky_k, y_t) = \p_1\cap \p_2$, where $\p_1=(x_1,x_k,y_t)$ and $\p_2=(x_1,y_k,y_t)$, we have $J^2 = J^{(2)}= Q_1^2\cap Q_2^2$, where $Q_1 = JR_{\p_1}\cap R$ and $Q_2=JR_{\p_2}\cap R$.

From Eq. (\ref{IFS}), we have $\omega_{1k} \leqslant \omega_{11}$, and so
\begin{align*}
JR_{\p_1} &= (x_1^{\omega_{11}}, x_1^{\omega_{1k}}, (x_1y_t)^{\omega_{1t}},x_k^{\omega_{kk}}, (x_ky_t)^{\omega_{kt}}, y_t^{\omega_{tt}})R_{\p_1}\\
&= (x_1^{\omega_{1k}}, (x_1y_t)^{\omega_{1t}},x_k^{\omega_{kk}}, (x_ky_t)^{\omega_{kt}}, y_t^{\omega_{tt}})R_{\p_1}.
\end{align*}
Hence $Q_1=JR_{\p_1} \cap R = (x_1^{\omega_{1k}}, (x_1y_t)^{\omega_{1t}},x_k^{\omega_{kk}}, (x_ky_t)^{\omega_{kt}}, y_t^{\omega_{tt}})$.

Similarly, 
\begin{align*}
JR_{\p_2} &= (x_1^{\omega_{11}}, (x_1y_k)^{\omega_{1k}}, (x_1y_t)^{\omega_{1t}},y_k^{\omega_{kk}}, y_t^{\omega_{kt}}, y_t^{\omega_{tt}})R_{\p_2}\\
&=(x_1^{\omega_{11}}, (x_1y_k)^{\omega_{1k}}, (x_1y_t)^{\omega_{1t}},y_k^{\omega_{kk}}, y_t^{\omega_{kt}})R_{\p_2},
\end{align*}
and so $Q_2 = JR_{\p_2} \cap R = (x_1^{\omega_{11}}, (x_1y_k)^{\omega_{1k}},(x_1y_t)^{\omega_{1t}},y_k^{\omega_{kk}}, y_t^{\omega_{kt}}).$

Let $f = x_1^{\omega_{11}}y_t^{\omega_{kt}}$. Since $\omega_{11} \geqslant 2\omega_{1k}$ by Eq. (\ref{IFS}), we have $f\in Q_1^2$. Note that $f\in Q_2^2$ since $x_1^{\omega_{11}},y_t^{\omega_{kt}}\in Q_2$, so $f\in J^2$.  Note that $y_t^{\omega_{tt}}\nmid f$ since $\omega_{tt}\ge 2\omega_{kt}>\omega_{kt}$ by Eq. (\ref{IFS}). Together with the fact that $f\in J^2$, this implies that either $x_1^{\omega_{11}} (x_1y_t)^{\omega_{1t}} \mid f$ or $(x_1y_t)^{2\omega_{1t}} \mid f$. The first case is impossible because $\deg_{x_1}(x_1^{\omega_{11}} (x_1y_t)^{\omega_{1t}}) > \deg_{x_1}(f)$. Hence  $(x_1y_t)^{2\omega_{1t}} \mid f$. It follows that $2\omega_{1t}\leqslant \omega_{11}$ and $2\omega_{1t}\leqslant \omega_{kt}$. 

Similarly, if we choose $g=x_1^{\omega_{1k}}y_t^{\omega_{tt}}$, then we can get $2\omega_{1t}\leqslant \omega_{tt}$ and $2\omega_{1t}\leqslant \omega_{1k}$. The proof  is  now complete.
\end{proof}

\begin{lem}\label{VC1} If $I(G_\omega)^2$ is Cohen-Macaulay, then
\begin{enumerate}
	 \item $2\omega(x_ix_j) \leqslant \min\{\omega(x_iy_i), \omega(x_jy_j)\}$ if $x_ix_j\in E(G)$  with $i\ne j$.
	\item $2\omega(x_ix_j)\leqslant \min\{\omega(x_iy_k), \omega(x_kx_j)\}$ for all different indices $i,j,k$ such that $x_iy_k$, $x_kx_j\in E(G)$. 
\end{enumerate}
\end{lem}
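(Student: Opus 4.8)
The plan is to argue by induction on $t$, following the scheme of Lemma~\ref{TwoQ} but with its $X$-$Y$ edges replaced by $X$-$X$ edges and with Properties~$(3*)$ and $(5*)$ playing the role of Property~$(4*)$. The base case is $t=2$: if $x_1x_2\in E(G)$ then $(3*)$ gives $x_1y_2\notin E(G)$ and $(2*)$ gives $x_2y_1\notin E(G)$, so $G$ is the path $y_1 - x_1 - x_2 - y_2$ and part~(1) is exactly Lemma~\ref{twotimes}. For the inductive step I would first peel away every leaf that sits at a non-distinguished pair: since $y_1$ is always a leaf (by $(2*)$), whenever index~$1$ is not among the distinguished indices Lemma~\ref{L2} shows $I((G\setminus x_1)_\omega)^2$ is Cohen-Macaulay, and discarding the now-isolated $y_1$ yields a Cohen-Macaulay very well-covered graph on $t-1$ pairs to which the induction hypothesis applies; the same works for any leaf at a non-distinguished pair $\{x_m,y_m\}$. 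By Lemma~\ref{leaf2} such a leaf exists unless both leaves of $G$ lie at distinguished pairs, so this reduces part~(1) to edges incident to $x_1$ and part~(2) to triples involving index~$1$, with $G$ having no non-distinguished leaf.

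The second step is to identify the induced subgraph cut out by the distinguished vertices. For part~(1) the four vertices $x_i,y_i,x_j,y_j$ span the path $y_i - x_i - x_j - y_j$, again because $(3*)$ forbids $x_iy_j$ and $(2*)$ forbids $x_jy_i$. For part~(2), Property~$(3*)$ excludes both possible chords: from $x_iy_k\in E(G)$ it gives $x_ix_k\notin E(G)$, and from $x_kx_j\in E(G)$ it gives $x_jy_k\notin E(G)$; together with $x_ky_k\in E(G)$ and $x_ix_j\in E(G)$ (the latter from $(5*)$) this shows that $x_i,y_k,x_k,x_j$ span the $4$-cycle $x_i - y_k - x_k - x_j - x_i$, in which the edge $x_ix_j$ is adjacent to exactly the two edges $x_iy_k$ and $x_kx_j$ whose weights appear in the claimed bound.

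For the remaining configurations --- where every leaf is distinguished --- I would run the computation of Case~$3$ of Lemma~\ref{TwoQ}. Since we are no longer in the component case, $x_i$ or $x_j$ carries a further edge; I would keep one witnessing pair (the pair $\{x_k,y_k\}$ in part~(2), or a pair $\{x_m,y_m\}$ joined to $x_1$ or $x_j$ in part~(1)) and apply Lemma~\ref{reduceEdges} repeatedly to pass to the induced subgraph $H$ on these three pairs, so that $I(H_\omega)^2$ is unmixed. I would then localize at a two-element set $W$, inverting one vertex on each side as $W=\{y_1,x_t\}$ does in Lemma~\ref{TwoQ}, to obtain an ideal $J$ with exactly two minimal primes, whence $J^2=J^{(2)}$. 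Feeding in the shorter inequalities already supplied by the induction --- for instance $2\omega(x_kx_j)\leqslant\omega(x_ky_k)$, coming from part~(1) for $G\setminus\{x_1,y_1\}$ --- I would exhibit an explicit monomial $f\in J^{(2)}=J^2$ and read off from the possible factorizations of $f$ into two generators of $J$ that a squared generator $(x_ix_j)^{2\omega(x_ix_j)}$ must divide it, which forces the desired inequality $2\omega(x_ix_j)\leqslant\min\{\omega(x_iy_k),\omega(x_kx_j)\}$ (and likewise for part~(1)).

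The main obstacle is the loss of the symmetric ``top'' reduction that drives the bipartite proof. There one may delete $y_t$ because $x_t$ is forced to be a leaf, and this is exactly what makes the shorter inequalities --- the analogues of the estimates~$(\ref{IFS})$ --- available for the extremal computation even when index~$1$ is involved. Since $X$ is no longer independent, $x_t$ need not be a leaf, so a shorter inequality attached to a witness edge $x_1x_m$ that itself involves index~$1$ is not handed to us for free by peeling; and the factor~$2$ cannot be recovered from a bare path or $4$-cycle, whose unmixed square yields only the weaker bound $\omega(x_ix_j)\leqslant\min\{\dots\}$ of Lemma~\ref{L4}. Circumventing this --- whether by a secondary induction on the larger index, or by choosing a single localization of $I(G_\omega)^2$ that simultaneously controls the witness weight --- is the crux, and the chord exclusions of the second step are precisely what guarantee that no competing generator of $J$ can absorb the degree of $(x_ix_j)^{2\omega(x_ix_j)}$ in a factorization of $f$.
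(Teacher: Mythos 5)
Your setup is on the right track: the induction on $t$, peeling pendant pairs at non-distinguished indices via Lemma \ref{L2}, and the observation that after peeling every leaf sits at a distinguished pair are exactly how the paper begins. But the proposal does not complete the proof, and you say so yourself: the ``main obstacle'' you describe — the missing analogues of the inequalities (\ref{IFS}), which in Lemma \ref{TwoQ} are produced by the symmetric peeling of $x_1$ and $y_t$ available only when $X$ is independent — is left as an unresolved ``crux,'' with two possible workarounds named but neither carried out. That gap is genuine: the explicit-monomial computation in Case 3 of Lemma \ref{TwoQ} really does need those shorter inequalities as input (they are what let one rule out the competing factorizations of $f$), and a bare path or $4$-cycle with an unmixed square only yields the weaker bound $\omega(x_ix_j)\leqslant\min\{\cdots\}$ of Lemma \ref{L4}, as you note. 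So the extremal step for the configurations where both leaves are distinguished is missing.

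The idea you are missing is that no new computation is needed at all. Once you have peeled down to the case where $x_iy_i$ and $x_jy_j$ are the only pendant edges, Lemma \ref{leaf2} says $y_i,y_j$ are the only two leaves, and Lemma \ref{bp} then says $G$ is \emph{bipartite}. By \cite[Theorem 3.4]{HH}, $G$ admits a bipartition $(U,V)$ with a labeling satisfying condition $(*)$, and by \cite[Theorem 0.2]{CRT} its perfect matching is unique, so $\{x_1y_1,\ldots,x_ty_t\}=\{u_1v_1,\ldots,u_tv_t\}$ and hence $\{x_iy_i,x_jy_j\}=\{u_1v_1,u_tv_t\}$, these being the pendant edges. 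Matching up leaves gives $y_i=v_1$, $y_j=u_t$, hence $x_i=u_1$, $x_j=v_t$; in part (2) the middle pair becomes $x_k=u_s$, $y_k=v_s$ for some $1<s<t$. Under this relabeling the troublesome $X$--$X$ edge $x_ix_j$ is the edge $u_1v_t$ joining the two sides of the bipartition, and the desired inequalities become literally $2\omega(u_1v_t)\leqslant\min\{\omega(u_1v_1),\omega(u_tv_t)\}$, respectively $2\omega(u_1v_t)\leqslant\min\{\omega(u_1v_s),\omega(u_sv_t)\}$ — that is, the statements of Lemma \ref{TwoQ}, whose hypothesis (that $X$ is an independent set) is now satisfied. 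The whole non-bipartite difficulty thus dissolves into a change of coordinates, and your planned re-run of Case 3, the auxiliary localizations, and the $4$-cycle analysis are unnecessary.
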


\begin{proof} $(1)$ We prove the ststement  by induction on $t\geqslant 2$. If $t = 2$, then it follows from Lemma \ref{twotimes}.
Suppose $t\geqslant 3$. Note that $i\ne j$. If $G$ has a pendant edge $x_py_p$ with $p\notin \{i,j\}$, then $I((G\setminus\{x_p,y_p\})_\omega)^2$ is Cohen-Macaulay by Lemma \ref{L2}. By applying the induction hypothesis on $H_\omega$, we obtain  the desired inequalities. Therefore, we  can assume that $x_py_p$ is not a pendant edge for $p\notin\{i,j\}$, and hence $x_iy_i$ and $x_jy_j$ are the only pendant edges by Lemma \ref{leaf2}. In particular, $y_i$ and $y_j$ are the only leaves of $G$.  
Therefore, by Lemma \ref{bp}, $G$ is bipartite. Thus, by \cite[Theorem $3.4$]{HH}, $G$ has a bipartition $(U,V)$ that satisfies the condition $(*)$. Furthermore, if $U =\{u_1,\ldots,u_t\}$ and $V = \{v_1,\ldots,v_t\}$,  by \cite[Theorem 0.2]{CRT},  $G$ has a unique perfect matching, we have
$$\{x_1y_1,\ldots,x_ty_t\} = \{u_1v_1,\ldots,u_tv_t\}.$$
Since $u_1v_1$ and $u_tv_t$ are pendent edges, it follows that $\{x_iy_i,x_jy_j\} = \{u_1v_1,u_tv_t\}$.  We can assume that $x_iy_i = u_1v_1$ as edges of $G$ so that $x_jy_j = u_tv_t$. Since $v_1, u_t$ are leaves, we deduce that $y_i = v_1$ and $y_j = u_t$. Then,  the inequality now becomes $2\omega(u_1v_t) \leqslant \min\{\omega(u_1v_1), \omega(u_tv_t)\}$,  which  follows from  Lemma \ref{TwoQ}.

$(2)$ The proof is almost the same as in the previous case. We prove by induction on $t\geqslant 3$. If $t = 3$, since $x_ky_k$ is not a pendent edge, we deduce that $G$ is a bipartite graph by Lemma \ref{bp}. Actually, we can check that $(\{x_i,x_k,y_j\}, \{y_i,y_k,x_j\})$ is a  bipartition of $G$ and it has a representation as in Figure \ref{WGH2}. With this configuration of $G$, the desired inequality follows from Lemma \ref{TwoQ}.

\begin{figure}[ht]
\includegraphics[scale=0.45]{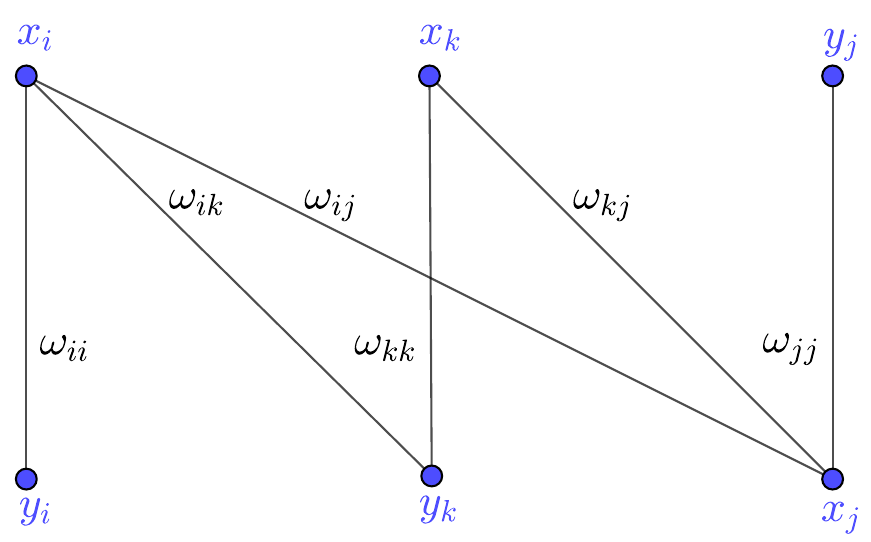}\\
\medskip
\caption{The graph $G_\omega$ after swapping the endpoints of $x_jy_j$.}
\label{WGH2}
\end{figure}

Suppose that $t\geqslant 4$. If $G$ has a pendant edge $x_py_p$ with $p\notin \{i,j\}$, then $I((G\setminus\{x_p,y_p\})_\omega)^2$ is Cohen-Macaulay by Lemma \ref{L2}. Applying the induction hypothesis to $H_\omega$, we obtain  the desired inequalities. Hence we can assume that $x_py_p$ is not a pendant edge for $p\notin\{i,j\}$, and hence $x_iy_i$ and $x_jy_j$ are the only pendant edges of $G$ by Lemma \ref{leaf2}. In particular, $y_i$ and $y_j$ are the only leaves of $G$.  In this case,  $G$ is bipartite by Lemma \ref{bp}. 

By the same argument as in the previous case, $G$ has a bipartition $(U,V)$ that satisfies the condition $(*)$, and if $U =\{u_1,\ldots,u_t\}$ and $V = \{v_1,\ldots,v_t\}$, then
$$\{x_1y_1,\ldots,x_ty_t\} = \{u_1v_1,\ldots,u_tv_t\}.$$

Since $u_1v_1$ and $u_tv_t$ are pendent edges of $G$, it follows that $\{x_iy_i,x_jy_j\} = \{u_1v_1,u_tv_t\}$. So either $x_iy_i = u_1v_1$ and $x_jy_j=u_tv_t$ or $x_iy_i = u_tv_t$ and $x_jy_j=u_1v_1$. This also implies that $x_ky_k = u_sv_s$ for some $s$ with $1 < s < t$. We now consider two possible cases:

{\it Case $1$}: $x_iy_i = u_1v_1$ and $x_jy_j=u_tv_t$. Then $y_i=v_1$ and $y_j = u_t$, since $v_1$ and $u_t$ are the only leaves of $G$, and then $x_i = u_1$ and $x_j = v_t$. Since $x_kx_j\in E(G)$, we conclude that $x_k = u_s$ and $y_k = v_s$. The desired inequality now becomes $2\omega(u_1v_t)\leqslant\min\{\omega(u_1v_s),\omega(u_sv_t)\}$. In this context, this inequality follows from Lemma \ref{TwoQ}.

{\it Case $2$}: $x_iy_i = u_tv_t$ and $x_jy_j=u_1v_1$. We follow the same proof as in the previous case, and the proof  is now complete.
\end{proof}

The following lemma can be proved by similar arguments as in the proof of  Lemma \ref{VC1}, we omit its proof.

\begin{lem}\label{VC2} If $I(G_\omega)^2$ is Cohen-Macaulay, then the following conclusions
hold:
\begin{enumerate}
\item $2\omega(x_iy_j) \leqslant \min\{\omega(x_iy_i), \omega(x_jy_j)\}$ if $x_iy_j\in E(G)$ with $i\ne j$.
\item $2\omega(x_iy_j)\leqslant \min\{\omega(x_iy_k), \omega(x_ky_j)\}$ for all different indices $i,j,k$ such that $x_iy_k$, $x_ky_j\in E(G)$.
\end{enumerate}
\end{lem}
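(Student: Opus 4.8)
The plan is to prove both inequalities by induction on $t$, mirroring the proof of Lemma \ref{VC1} and reducing the general very well-covered situation to the bipartite case governed by Lemma \ref{TwoQ}. For part $(1)$ I would induct on $t\geqslant 2$. The base case $t=2$ is exactly the path of length $3$ handled in Lemma \ref{twotimes}: indeed $x_1y_2\in E(G)$ forces $x_1x_2\notin E(G)$ by Property $(3*)$, so $G$ is the path $y_1x_1y_2x_2$ with middle edge $x_1y_2$. For the inductive step $t\geqslant 3$, I first remove any pendant edge $x_py_p$ with $p\notin\{i,j\}$: by Lemma \ref{L2} the ideal $I((G\setminus\{x_p,y_p\})_\omega)^2$ is Cohen-Macaulay, and the induction hypothesis applied to this smaller graph gives the desired inequality. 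Hence I may assume that $x_iy_i$ and $x_jy_j$ are the only pendant edges, so by Lemma \ref{leaf2} they carry exactly two leaves and by Lemma \ref{bp} the graph $G$ is bipartite.

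In the bipartite situation I invoke \cite[Theorem 3.4]{HH} to obtain a bipartition $(U,V)$ satisfying Property $(*)$ and \cite[Theorem 0.2]{CRT} to see that the perfect matching $\{x_1y_1,\ldots,x_ty_t\}$ coincides with the matching $\{u_1v_1,\ldots,u_tv_t\}$ of the new labeling. Its two pendant edges $u_1v_1$ and $u_tv_t$, with leaves $v_1$ and $u_t$, are then precisely $x_iy_i$ and $x_jy_j$. The decisive point is the identification of the leaves: because $x_iy_j\in E(G)$ with $i\neq j$, both $x_i$ and $y_j$ have degree at least $2$, so the two leaves are forced to be $y_i$ and $x_j$, rather than $y_i$ and $y_j$ as in Lemma \ref{VC1}. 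Carrying this through the relabeling, in either matching $\{x_iy_i,x_jy_j\}=\{u_1v_1,u_tv_t\}$ the edge $x_iy_j$ becomes the cross edge $u_1v_t$ while $x_iy_i,x_jy_j$ become $u_1v_1,u_tv_t$; Lemma \ref{TwoQ}$(1)$ applied with the ordered indices $1<t$ yields $2\omega(u_1v_t)\leqslant\min\{\omega(u_1v_1),\omega(u_tv_t)\}$, which is the required inequality.

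For part $(2)$ the argument is parallel, with induction on $t\geqslant 3$. In the base case $t=3$ the indices must be $i=1$, $k=2$, $j=3$, and Property $(3*)$ together with $x_1y_3\in E(G)$ (from Property $(4*)$) excludes every edge inside $X$, so $G$ is already bipartite with bipartition $(X,Y)$ and Lemma \ref{TwoQ}$(2)$ applies directly. This is simpler than the corresponding step of Lemma \ref{VC1}, where the edge $x_kx_j$ inside $X$ forced a swap of the endpoints of $x_jy_j$. For $t\geqslant 4$ I again delete any pendant edge $x_py_p$ with $p\notin\{i,j\}$ and use Lemma \ref{L2} with the induction hypothesis; here $x_ky_k$ is automatically non-pendant, since both $x_k$ and $y_k$ have degree at least $2$, so the index $k$ is never removed. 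Once $G$ is reduced to the bipartite two-leaf case, the same relabeling sends the middle matching edge $x_ky_k$ to some $u_sv_s$ with $1<s<t$ (it is interior precisely because it is non-pendant) and sends $x_iy_j,x_iy_k,x_ky_j$ to $u_1v_t,u_1v_s,u_sv_t$; Lemma \ref{TwoQ}$(2)$ with $1<s<t$ then completes the proof.

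The step I expect to be the main obstacle is the bookkeeping in the bipartite reduction: one must check that, after passing to the canonical bipartition $(U,V)$, the cross edge $x_iy_j$ and the whole configuration of part $(2)$ land in exactly the indexed shape consumed by Lemma \ref{TwoQ}, and in particular that the leaves are $y_i,x_j$ and not $y_i,y_j$. Both matchings $\{x_iy_i,x_jy_j\}=\{u_1v_1,u_tv_t\}$ have to be examined, but each yields $x_iy_j=u_1v_t$, so the two sub-cases give the same conclusion; beyond this case analysis the computation is routine once Lemma \ref{TwoQ} is in hand.
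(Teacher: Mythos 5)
Your proposal is correct and takes essentially the same approach the paper intends: the paper omits the proof of this lemma, saying only that it follows by the same arguments as Lemma \ref{VC1}, and your induction on $t$ with pendant-edge deletion (Lemma \ref{L2}), the two-leaf bipartite reduction (Lemmas \ref{leaf2} and \ref{bp}), and the final appeal to Lemma \ref{TwoQ} is exactly that argument carried out. You also correctly pin down the only genuine points of divergence from Lemma \ref{VC1}: since the edge in question is $x_iy_j$, the two leaves are $y_i$ and $x_j$ (not $y_i$ and $y_j$), so in either identification $\{x_iy_i,x_jy_j\}=\{u_1v_1,u_tv_t\}$ the cross edge becomes $u_1v_t$, and in the $t=3$ base case of part $(2)$ the configuration is already bipartite so no endpoint swap is needed.
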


\begin{lem}\label{bUM} Suppose that  $G_\omega$ satisfies all the conditions $(1)$ and $(2)$ in Lemmas \ref{VC1} and \ref{VC2}. Then $I(G_\omega)^2$ is unmixed.
\end{lem}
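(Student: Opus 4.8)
The plan is to reduce the unmixedness of $I(G_\omega)^2$ to the identity $I(G_\omega)^2=I(G_\omega)^{(2)}$ and then to prove that identity. Write $I=I(G_\omega)$. Because $G$ is a Cohen--Macaulay very well-covered graph, Lemma \ref{cmCover} shows that every minimal vertex cover $C$ meets each pair $\{x_k,y_k\}$ in exactly one vertex, so $|C|=t$ and every minimal prime $\p_C=(v\mid v\in C)$ of $I$ has height $t$. For each such $C$ put $Q_C=IR_{\p_C}\cap R$. Inverting the variables outside $C$ turns each generator $(vw)^{\omega(vw)}$ into a power of its endpoint lying in $C$, so $Q_C$ is generated by monomials supported on $C$; hence both $Q_C$ and $Q_C^2$ are $\p_C$-primary. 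It follows that $I^{(2)}=\bigcap_C\bigl(I^2R_{\p_C}\cap R\bigr)=\bigcap_C Q_C^2$ is an intersection of $\p_C$-primary ideals, all of height $t$, so $I^{(2)}$ is unmixed. Since $I^2\subseteq I^{(2)}$ always holds, it suffices to prove the reverse inclusion $\bigcap_C Q_C^2\subseteq I^2$; this yields $I^2=I^{(2)}$, which is unmixed, and the lemma follows.

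For the inclusion I would argue by induction on $t$. The cases $t=1$ (a single weighted edge, where $I^2$ is principal, hence unmixed) and $t=2$ (a weighted path of length $3$) are settled by a direct computation of the primary decomposition, using the factor-$2$ inequalities provided by the hypotheses. For the inductive step, by Lemma \ref{leaf2} together with property $(2*)$ the vertex $y_1$ is a leaf whose only neighbor is $x_1$, so $x_1y_1$ is a pendant edge. Set $H=G\setminus\{x_1,y_1\}$; then $H_\omega$ again satisfies $(*)$ and inherits conditions $(1)$ and $(2)$ of Lemmas \ref{VC1} and \ref{VC2} by restriction, so induction gives $I(H_\omega)^2=I(H_\omega)^{(2)}$. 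Given a monomial $u\in\bigcap_C Q_C^2$, I would split the minimal covers of $G$ into those containing $x_1$ and those containing $y_1$, noting that a cover $C\ni y_1$ must also contain $y_j$ for every $j$ with $x_1y_j\in E(G)$, and that the only generators of $Q_C$ involving $x_1$ or $y_1$ arise from the edges incident to $x_1$. Evaluating $u\in Q_C^2$ over these covers, combined with the factor-$2$ inequalities of conditions $(1)$ and $(2)$, should force either $(x_1y_1)^{2\omega(x_1y_1)}\mid u$, whence $u\in I^2$ at once, or else allow one edge generator incident to $x_1$ to be peeled off so that the remaining factor lies in $\bigcap_{C'}Q_{C'}^2$ for the covers $C'$ of $H$, where the inductive hypothesis applies.

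The main obstacle is precisely this inductive step: controlling the interplay between the pendant edge $x_1y_1$ and the remaining edges at $x_1$, namely the cross edges $x_1y_j$ and, when $G$ is not bipartite, the within-cover edges $x_1x_j$. This is where the length-two path conditions---part $(2)$ of Lemma \ref{VC1} for configurations $x_iy_k,x_kx_j$ and part $(2)$ of Lemma \ref{VC2} for $x_iy_k,x_ky_j$---must be used, in order to rule out a monomial that lies in every $Q_C^2$ through ``diagonal'' products of two generators supported on a single cover yet fails to be a multiple of a genuine product of two edge generators of $I$. Closing this gap requires a case analysis over the three edge types and over which vertex of each matched pair is chosen into the cover; the inequalities carrying the factor $2$ are exactly the numerical input that makes each case go through.
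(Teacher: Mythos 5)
Your reduction is sound: using Lemma \ref{cmCover} all minimal vertex covers have size $t$, each $Q_C=I(G_\omega)R_{\p_C}\cap R$ is indeed $\p_C$-primary with $Q_C^2$ again $\p_C$-primary, so $I(G_\omega)^{(2)}=\bigcap_C Q_C^2$ is unmixed and the lemma is equivalent to the inclusion $I(G_\omega)^{(2)}\subseteq I(G_\omega)^2$. But that inclusion is the entire mathematical content of the statement, and your proposal does not prove it: the inductive step is described only in conditional language (``should force either \dots or else allow one edge generator \dots to be peeled off''), and your final paragraph explicitly concedes that ``closing this gap requires a case analysis'' which you do not carry out. Moreover the peeling step as stated is not even cleanly formulated: for a cover $C\ni x_1$ the ideal $Q_C$ acquires pure powers of $x_1$ coming from \emph{all} edges at $x_1$, so dividing $u$ by an ``edge generator incident to $x_1$'' does not obviously leave a monomial lying in $Q_{C'}^2$ for every cover $C'$ of $H=G\setminus\{x_1,y_1\}$; and if the remainder really did lie in $I(H_\omega)^{(2)}=I(H_\omega)^2$, you would be placing $u$ in $I(G_\omega)^3$, a sign that the case split has not been thought through. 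So this is a genuine gap, not a routine verification.

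For comparison, the paper avoids both the symbolic-power decomposition and the induction on $t$. It argues by contradiction directly on associated primes: if $I(G_\omega)^2$ were not unmixed, there would be a monomial $f\notin I(G_\omega)^2$ with $\p=I(G_\omega)^2\colon f$ containing both $x_k$ and $y_k$ for some $k$ (by Lemma \ref{cmCover}); writing $fx_k$ and $fy_k$ as monomials times products of two generators, it first shows $(x_ky_k)^{\omega(x_ky_k)}$ cannot occur among those generators --- this step uses the unmixedness of $I(G_\omega)$ itself, which is already known from the first-power classification --- and then identifies the generators dividing $fx_k$ and $fy_k$ as coming from edges $x_iy_k$ and $x_kx_j$ (or $x_ky_j$); properties $(4*)$ and $(5*)$ produce the edge $x_ix_j$ (or $x_iy_j$), and the factor-$2$ inequalities of Lemmas \ref{VC1} and \ref{VC2} force $f\in I(G_\omega)^2$, a contradiction. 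If you want to salvage your route, you must either execute the full case analysis over the cover types and edge types at $x_1$, or switch to an argument of this associated-prime type, where the hypothesis that $I(G_\omega)$ is unmixed can be invoked to kill the problematic diagonal products.
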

\begin{proof}
Let $I=I(G_\omega)$. Suppose that  $I^2$ is not unmixed, then there exists an associated prime ideal $\p$ of $I^2$ containing at least $t+1$ variables. In particular, it contains $x_k,y_k$ for some $k$. Let $f$ be a monomial such that $\p = I^2 \colon f$. Then
$fx_k = uf_1f_2$ and $fy_k = vg_1g_2$, where $f_1,f_2,g_1,g_2$ are monomial generators of $I$ and $u,v$ are monomials in $R$. Note that $f\notin I^2$, so $x_k \nmid u$ and $y_k \nmid v$. We next prove the following claim.

{\it Claim }: $(x_ky_k)^{\omega(x_ky_k)}\notin\{f_1,f_2,g_1,g_2\}$. Assume  by contradiction that the monomial $(x_ky_k)^{\omega(x_ky_k)}$ is in this set. If $(x_ky_k)^{\omega(x_ky_k)}\in\{f_1,f_2\}$, then we can assume that $f_1=(x_ky_k)^{\omega(x_ky_k)}$. It follows that $f=ux_k^{\omega(x_ky_k)-1}y_k^{\omega(x_ky_k)}f_2$ and 
\begin{equation}\label{F1}
 y_k f= ux_k^{\omega(x_ky_k)-1}y_k^{\omega(x_ky_k)+1}f_2= vg_1g_2.
\end{equation}
This gives $y_k^{\omega(x_ky_k)+1}\mid g_1g_2$ since $y_k \nmid v$. In particular, $\deg_{y_k}(g_1g_2)\geqslant \omega(x_ky_k)+1$. Using  the conditions $(1)$ in Lemmas $(\ref{VC1})$ and $(\ref{VC2})$, we conclude that either $g_1=(x_ky_k)^{\omega(x_ky_k)}$ or $g_2=(x_ky_k)^{\omega(x_ky_k)}$. We can assume that $g_1=(x_ky_k)^{\omega(x_ky_k)}$, then $g_1=f_1$.
Since $(x_ky_k)^{\omega(x_ky_k)} \mid fx_k$ and $(x_ky_k)^{\omega(x_ky_k)} \mid fy_k$, it follows that $y_k^{\omega(x_ky_k)} \mid f$ and $x_k^{\omega(x_ky_k)} \mid f$, respectively. So $f_1\mid f$. Next, we  let $h=f/f_1$. Then $hx_k = uf_2\in I$ and $hy_k=vg_2\in I$. 

On the other hand,  by the conditions  $(1)$ and $(2)$ in Lemmas $(\ref{VC1})$ and $(\ref{VC2})$ and \cite[Theorem 3.4] {SSTY}, we see that $I$ is unmixed. In particular,  every associated prime ideal of $I$ contains either $x_k$ or $y_k$, but not both, by Lemma \ref{cmCover}. It follows that $h\in I$. Hence, $f = hf_1\in I^2$, a contradiction.

Similarly, the case $(x_ky_k)^{\omega(x_ky_k)}\in\{g_1,g_2\}$ also leads to a contradiction. This proves the claim.

\medskip

Now, since $y_k\nmid v$ and $fy_k = vg_1g_2$,   $y_k \mid g_1g_2$, and so we can assume $y_k\mid g_1$. Together with the above  claim, this gives $g_1 = (x_iy_k)^{\omega(x_iy_k)}$ for some  $i < k$. Hence $x_i^{\omega(x_iy_k)}\mid f$ by the expression $fy_k = vg_1g_2$.
Similarly, using $fx_k = uf_1f_2$ and $x_k\nmid u$, we can suppose that $x_k \mid f_1$. Since $f_1\ne (x_ky_k)^{\omega(x_ky_k)}$ by the above  claim, we have either $f_1=(x_kx_j)^{\omega(x_kx_j)}$ for some $j\ne k$, or $f_1= (x_ky_j)^{\omega(x_ky_j)}$ for some $k< j$.

If $f_1= (x_kx_j)^{\omega(x_kx_j)}$ for some $j\ne k$, then  $x_j^{\omega(x_kx_j)}\mid f$. Since $g_1=(x_iy_k)^{\omega(x_iy_k)}$, $x_iy_k\in E(G)$,  we  have $x_ix_j\in E(G)$ and $i\ne j$ by Property $(5*)$. By the inequalities in Lemma \ref{VC1}, $2\omega(x_ix_j)\leqslant \min\{\omega(x_iy_k), \omega(x_kx_j)\}$. Thus $(x_ix_j)^{2\omega(x_ix_j)} \mid f$, and hence $f\in I^2$,  a contradiction.

If $f_1= (x_ky_j)^{\omega(x_ky_j)}$ for some $k< j$, then $x_ky_j\in E(G)$.  Note that $x_iy_k\in E(G)$, since $g_1 =(x_iy_k)^{\omega(x_iy_k)}$.  This implies that $x_iy_j\in E(G)$ by Property $(4*)$. By   Lemma  \ref{VC2}, we get $(x_iy_j)^{2\omega(x_iy_j)}\mid f$,  and so $f\in I^2$, a contradiction.

In summary, all cases are impossible. So  $I^2$ is unmixed, as required.
\end{proof}

Now we are ready to prove the  main result of this section.

\begin{thm}\label{squarePowers} Let $G$ be a very well-covered graph with $2t$ vertices and let $\omega$ be a weight function on $E(G)$. Further, assume that the vertices of $G$ are labeled in such a way
that the condition $(*)$ at the beginning of this section holds. Then $I(G_\omega)^2$ is Cohen-Macaulay if and only if the following conditions hold:
\begin{enumerate}
\item $2\omega(x_iz_j) \leqslant \min\{\omega(x_iy_i), \omega(x_jy_j)\}$ for each $x_iz_j\in E(G)$ with $i\ne j$ and $z_j\in\{x_j,y_j\}$.
\item $2\omega(x_iz_j)\leqslant \min\{\omega(x_iy_k), \omega(x_kz_j)\}$ for all distinct indices $i,j,k$ such that $x_iy_k$, $x_kz_j\in E(G)$ with $z_j\in\{x_j,y_j\}$.
\end{enumerate}
\end{thm}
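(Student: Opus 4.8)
The forward implication is essentially already in hand: if $I(G_\omega)^2$ is Cohen--Macaulay, then specializing $z_j=x_j$ in conditions $(1)$ and $(2)$ recovers exactly the two inequalities of Lemma \ref{VC1}, while specializing $z_j=y_j$ recovers those of Lemma \ref{VC2}. Thus the entire content of the ``only if'' direction is the combination of Lemmas \ref{VC1} and \ref{VC2}, and I would dispatch it in a single line. Note that, conversely, assuming conditions $(1)$ and $(2)$ is \emph{equivalent} to assuming all four inequalities of Lemmas \ref{VC1} and \ref{VC2} at once, which is precisely the hypothesis of Lemma \ref{bUM}.

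For the ``if'' direction the plan is to run the criterion of Lemma \ref{lemCM}: writing $I=I(G_\omega)$, it suffices to show that $I^2$ has no embedded prime and that $\sqrt{I^2\colon f}$ is Cohen--Macaulay for every monomial $f\notin I^2$ (the passage from $I^2\colon f$ to its radical being governed by the depth formula of \cite[Theorem 7.1]{HT}). Unmixedness is free, since conditions $(1)$ and $(2)$ are exactly the hypotheses of Lemma \ref{bUM}; hence $I^2=I^{(2)}=\bigcap_{\p}Q_\p^2$, where $\p$ ranges over the minimal vertex covers of $G$, i.e.\ the associated primes of $I(G)$. Consequently, for each monomial $f\notin I^2$ one has $\sqrt{I^2\colon f}=\bigcap_{\p\in S_f}\p$ with $S_f=\{\p : f\notin Q_\p^2\}$, and the problem reduces to showing that every such intersection of minimal vertex covers defines a Cohen--Macaulay ring.

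To control these intersections I would induct on $t$. The base cases $t=1$ (a single weighted edge, a principal, hence complete-intersection, ideal) and $t=2$ (where $G$ is $2K_2$ or a path of length $3$) are handled directly; for the path of length $3$ the inequalities $(1)$--$(2)$ are exactly the hypothesis $\omega(x_iy_i)\geqslant 2\omega(x_ix_j)$ of Theorem \ref{Pn} with $k=2$, after the relabeling that casts its pendant matching into the required form. For the inductive step I would exploit the leaf $y_1$, which is always a leaf by Property $(2*)$, together with its neighbour $x_1$. By Lemma \ref{cmCover} each $\p\in S_f$ contains exactly one of $x_1,y_1$. In the principal case, where $f$ already carries a full power of $y_1$, multiplying by a power of $y_1$ as in the proof of Lemma \ref{L2} annihilates the components through $y_1$ and exhibits $\sqrt{I^2\colon f}$ in the form $(x_1)+\sqrt{I(H_\omega)^2\colon f'}$, where $H=G\setminus\{x_1,y_1\}$ is again very well-covered, satisfies $(*)$, and inherits conditions $(1)$--$(2)$; since adjoining the fresh variable $x_1$ preserves Cohen--Macaulayness, the induction hypothesis applied to $H_\omega$ closes this case, the symmetric case through $x_1$ being identical.

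The main obstacle is precisely this building-up step, which runs opposite to the easy deletion Lemma \ref{L2}: I must show that the factor $2$ in both families of inequalities is strong enough to prevent $\sqrt{I^2\colon f}$ from degenerating into a non--Cohen--Macaulay intersection of covers. This is the exact phenomenon isolated in Lemma \ref{twotimes}, where equality $k=q$ produced the non-CM ideal $(a,y)\cap(b,x)$, an intersection of two ``disconnecting'' covers. The heart of the argument is therefore to prove that when $S_f$ retains covers through both $x_1$ and $y_1$ simultaneously, the inequalities $2\omega(x_iz_j)\leqslant\min\{\dots\}$ force $f\in I^2$, so that such $f$ do not occur; bounding $\deg_{x_1}$ and $\deg_{y_1}$ of the generators of the components $Q_\p^2$ (together with the height drop $\hgt(Q_\p|_W)=\hgt(Q_\p)-1$ from Lemma \ref{reduceEdges}) should make the split $(x_1)+\sqrt{I(H_\omega)^2\colon f'}$ literal in the remaining cases. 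A secondary, bookkeeping-level difficulty is to treat the non-bipartite case ($X$ not independent, i.e.\ the $z_j=x_j$ inequalities) on the same footing as the bipartite one; here I would lean on Lemma \ref{VC1} and the structural Lemmas \ref{leaf2} and \ref{bp} to reduce, after peeling off the non-pendant matched pairs, to a configuration in which the leaf-removal argument above applies verbatim.
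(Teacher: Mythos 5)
Your ``only if'' direction is fine and coincides with the paper's (it is exactly Lemmas \ref{VC1} and \ref{VC2}), and your opening moves for the ``if'' direction are also sound: conditions $(1)$--$(2)$ give unmixedness by Lemma \ref{bUM}, hence $I^2=I^{(2)}=\bigcap_{\p}Q_\p^2$ and $\sqrt{I^2\colon f}=\bigcap_{\p\in S_f}\p$, so by Lemma \ref{lemCM} everything reduces to showing these intersections of minimal covers are Cohen--Macaulay. But the step you yourself flag as ``the heart of the argument'' is both unproven and, as you formulated it, false. You claim that if $S_f$ contains covers through $x_1$ and covers through $y_1$ simultaneously, then the weight inequalities force $f\in I^2$, so that only the ``pure'' cases (all covers through $x_1$, or all through $y_1$) survive and can be handled by splitting off $(x_1)$ and inducting on the smaller graph. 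Take $f=1$: then $f\notin I^2$, yet $S_f$ consists of \emph{all} associated primes, which by Lemma \ref{cmCover} includes covers through $x_1$ and covers through $y_1$; here $\sqrt{I^2\colon f}=I(G)$, which is Cohen--Macaulay but certainly mixed in your sense. So the mixed case genuinely occurs, the dichotomy collapses, and what actually has to be proved is that every mixed intersection arising as some $\sqrt{I^2\colon f}$ is Cohen--Macaulay --- which is the whole difficulty, and for which your proposal offers only the hope that degree bookkeeping in $x_1,y_1$ ``should'' suffice. The reverse-deletion step (going from Cohen--Macaulayness of $I(H_\omega)^2$, $H=G\setminus\{x_1,y_1\}$, back up to $G$) is likewise only sketched; Lemma \ref{L2} gives the opposite, easy, direction.

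The paper avoids this entire problem by a different mechanism, which you may want to internalize because it is what makes the theorem tractable. It inducts not on $t$ but on $m(G_\omega)=\sum_i\deg_G(y_i)$. If some $\deg_G(y_\ell)>1$, it builds a new very well-covered graph $G'$ by rerouting every edge $x_iy_\ell$ ($i<\ell$) to $x_ix_\ell$, keeping the weights; $G'$ still satisfies $(*)$ and conditions $(1)$--$(2)$, and $m(G'_{\omega'})<m(G_\omega)$, so $I(G'_{\omega'})^2$ is Cohen--Macaulay by induction. The base case $m(G_\omega)=t$ is the pendant-matching situation, settled by polarization in Theorem \ref{Pn}. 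The inductive step then uses that $y_1-x_1,\ldots,y_t-x_t$ is a system of parameters for both $R/I(G_\omega)^2$ and $R/I(G'_{\omega'})^2$, that $(R/I(G_\omega)^2)/(y_\ell-x_\ell)\cong(R/I(G'_{\omega'})^2)/(y_\ell-x_\ell)$, and --- this is where unmixedness (Lemma \ref{bUM}) and Lemma \ref{cmCover} are actually spent --- that $y_\ell-x_\ell$ avoids every associated prime of $I(G_\omega)^2$ and is therefore regular, so Cohen--Macaulayness lifts back from the quotient. In other words, the paper never needs to classify which radicals $\sqrt{I^2\colon f}$ occur; the single regular linear form $y_\ell-x_\ell$ replaces all of that case analysis. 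Your framework could in principle be completed, but it would require proving a structural statement about admissible intersections of covers that the paper deliberately sidesteps, so as it stands the proposal has a genuine gap at its central step.
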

\begin{proof}  $(\Longrightarrow)$: This follows from Lemmas \ref{VC1} and \ref{VC2}.

$(\Longleftarrow)$:  We prove the statement by induction on  $m(G_\omega): = \sum_{i=1}^t \deg_G(y_i)$. The base case where $m(G_\omega) = t$  follows from Theorem \ref{Pn}. Now suppose that $m(G_\omega)>t$. There exists some  $\ell$ such that $\deg_G(y_\ell)>1$. We can construct a  new very well-covered graph $G'$ from $G$ such that $V(G')=V(G)$ and the edge set is
$$E(G') = \{E(G) \setminus \{x_iy_\ell \mid x_iy_\ell\in E(G) \text{ with } i < \ell\}\} \cup \{x_ix_\ell\mid x_iy_\ell\in E(G) \text{ with } i < \ell\}.$$
We can check that $G'$ satisfies the condition $(*)$, so it is also very well-covered.

Next, we define the weight function  $\omega'$ on $E(G')$ by
$$
\omega'(e)=
\begin{cases}
\omega(x_iy_\ell) &\text{ if }  e =x_ix_\ell \text{ and } x_iy_\ell \in E(G) \text{ with } i < \ell,\\
\omega(e) &\text{ otherwise}.
\end{cases}
$$
We can check that the weight function  $\omega'$ on $E(G')$ satisfies all the  conditions of our theorem and that $m(G'_{\omega'})< m(G_\omega)$.  By the induction hypothesis, $I(G'_{\omega'})^2$ is Cohen-Macaulay. Since
$$(R/I(G_\omega)^2)/(y_\ell-x_\ell) \cong (R/I(G'_{\omega'})^2)/(y_\ell-x_\ell)$$
and $y_1-x_1,\ldots,y_t-x_t$ is a system of parameters for both $R/I(G_\omega)^2$ and $R/I(G'_{\omega'})^2$. This implies that $(R/I(G_\omega)^2)/(y_\ell-x_\ell)$ is Cohen-Macaulay.

So $I(G_\omega)^2$ is Cohen-Macaulay if $y_\ell-x_\ell$ is regular on $R/I(G_\omega)^2$. We now prove that $y_\ell-x_\ell$ is  indeed regular on $R/I(G_\omega)^2$. Since $I(G_\omega)^2$ is unmixed by Lemma \ref{bUM}, we have
$$\ass(R/I(G_\omega)^2) = \ass(R/I(G)).$$

On the other hand,  by Lemma \ref{cmCover}, $y_\ell-x_\ell$ does not belong to the set of  associated prime ideals of $I(G)$, so
$$y_\ell-x_\ell\notin \bigcup_{\p\in \ass(R/I(G_\omega)^2)}\p.$$
So  $y_\ell-x_\ell$ is regular on $R/I(G_\omega)^2$, hence $R/I(G_\omega)^2$ is Cohen-Macaulay. The proof of the theorem is now  complete.
\end{proof}

\section{Higher powers of edge ideals of weighted graphs}
\label{sec:Higher}

In this section, we study the Cohen-Macaulayness of higher powers of $I(G_\omega)$. Actually, Theorem \ref{Pn} deals with this problem for the case where $G$ has a perfect matching consisting of  pendant edges. Note that this graph is a very well-covered graph. The following result generalizes this theorem for any very well-covered graph.

\begin{thm}\label{Tk} Let $G$ be a very well-covered graph satisfying the condition $(*)$ at the beginning of Section \ref{sec:Second}. Suppose  there exists an integer $\ell\geqslant 1$ such that
 the following conditions hold:
 \begin{enumerate}
     \item $\ell\omega(x_iz_j) \leqslant \min\{\omega(x_iy_i), \omega(x_jy_j)\}$ for each $x_iz_j\in E(G)$ with $i\ne j$ and $z_j\in\{x_j,y_j\}$.
    \item $\ell\omega(x_iz_j)\leqslant \min\{\omega(x_iy_k), \omega(x_kz_j)\}$ for all distinct indices $i,j,k$ such that $x_iy_k$, $x_kz_j\in E(G)$ with $z_j\in\{x_j,y_j\}$.
\end{enumerate}
Then $I(G_\omega)^n$ is Cohen-Macaulay for all $n=1,\ldots,\ell$.
\end{thm}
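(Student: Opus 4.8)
The plan is to mirror the inductive structure of Theorem \ref{squarePowers}, replacing the exponent $2$ there by the general exponent $\ell$, and to run the induction on the quantity $m(G_\omega) = \sum_{i=1}^t \deg_G(y_i)$. The base case $m(G_\omega) = t$ is exactly the situation where $G$ has a perfect matching consisting of pendant edges $x_1y_1,\ldots,x_ty_t$, and the hypotheses $(1)$ and $(2)$ reduce to the single condition $\omega(x_iy_i) \geqslant \ell\,\omega(x_ix_j)$ for every edge $x_ix_j$; this is precisely the input to Theorem \ref{Pn} with $k = \ell$, which gives that $I(G_\omega)^n$ is Cohen-Macaulay for all $n = 1,\ldots,\ell$. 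So the base case is immediate.

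For the inductive step, suppose $m(G_\omega) > t$, so that $\deg_G(y_\ell) > 1$ for some index (I would rename this index to avoid collision with the fixed integer $\ell$, say $\deg_G(y_r) > 1$). I would construct the same auxiliary graph $G'$ on the same vertex set, obtained by deleting each edge $x_iy_r$ with $i < r$ and inserting the edge $x_ix_r$ instead, and define $\omega'$ by transporting the weight $\omega(x_iy_r)$ to the new edge $x_ix_r$ and leaving all other weights unchanged. One checks $G'$ still satisfies the labeling condition $(*)$, that $\omega'$ inherits conditions $(1)$ and $(2)$ (this is a direct bookkeeping check, since the inequalities are stated symmetrically in the $x$- and $y$-endpoints through $z_j\in\{x_j,y_j\}$), and that $m(G'_{\omega'}) < m(G_\omega)$. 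By the induction hypothesis $I(G'_{\omega'})^n$ is Cohen-Macaulay for all $n = 1,\ldots,\ell$. The key structural identity is the isomorphism
\[
(R/I(G_\omega)^n)/(y_r - x_r) \;\cong\; (R/I(G'_{\omega'})^n)/(y_r - x_r),
\]
together with the fact that $y_1-x_1,\ldots,y_t-x_t$ is a system of parameters for both quotient rings; this forces $(R/I(G_\omega)^n)/(y_r-x_r)$ to be Cohen-Macaulay for each such $n$.

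It then remains to show, for each fixed $n\in\{1,\ldots,\ell\}$, that $y_r - x_r$ is a nonzerodivisor on $R/I(G_\omega)^n$, which upgrades Cohen-Macaulayness modulo $(y_r-x_r)$ to Cohen-Macaulayness of $R/I(G_\omega)^n$ itself. For this I need $I(G_\omega)^n$ to be unmixed, so that $\ass(R/I(G_\omega)^n) = \ass(R/I(G))$, after which Lemma \ref{cmCover} guarantees that no associated prime contains both $x_r$ and $y_r$ and hence that the linear form $y_r - x_r$ avoids every associated prime. The main obstacle is precisely the unmixedness of $I(G_\omega)^n$ for general $n$: Lemma \ref{bUM} establishes this only for $n = 2$, so I expect to need an analogue of Lemma \ref{bUM} valid for all $n\leqslant\ell$ under hypotheses $(1)$ and $(2)$. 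I would prove such an unmixedness statement by the same strategy as Lemma \ref{bUM} — assuming an associated prime of $I(G_\omega)^n$ contained both $x_k$ and $y_k$, writing the corresponding witness monomial relations, and using the inequalities $\ell\,\omega(x_iz_j)\leqslant\min\{\cdots\}$ (which dominate the factor $n\leqslant\ell$ needed to force an extra power of a generator of $I(G_\omega)$ to divide $f$) to derive the contradiction $f\in I(G_\omega)^n$. The degree counting is heavier than in the $n=2$ case because a product of $n$ generators can distribute the variables $x_k,y_k$ among several factors, so the care lies in showing that the hypotheses still force enough concentration of degree to reconstruct a full extra generator; this bookkeeping is the crux of the argument, while the descent via $G'$ and the regular-element conclusion are routine once unmixedness is in hand.
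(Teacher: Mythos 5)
Your proposal is correct and follows essentially the same route as the paper: the paper's own proof is exactly a two-step sketch, first establishing unmixedness of $I(G_\omega)^n$ for $n\leqslant\ell$ by rerunning the argument of Lemma \ref{bUM} (with induction on $n$, the case $n=1$ coming from \cite[Theorem 3.4]{SSTY}), and then repeating the induction on $m(G_\omega)$ from the proof of Theorem \ref{squarePowers} --- base case via Theorem \ref{Pn}, the modified graph $G'_{\omega'}$, the system of parameters $y_1-x_1,\ldots,y_t-x_t$, and regularity of $y_r-x_r$ via Lemma \ref{cmCover}. You correctly identify the generalized unmixedness lemma as the only nontrivial new ingredient, which is precisely where the paper also places the burden.
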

\begin{proof}
 This can be shown by arguments similar to the only part of   Theorem \ref{squarePowers}, and we sketch the main ideas of the proof here.

The first step is to prove that $I(G_\omega)^n$ is  unmixed. To prove this,  we do induction on $n$. If $n = 1$, this is  done by \cite[Theorem 3.4] {SSTY}. If $n>1$, we do the same  as in the proof of Lemma \ref{bUM}.

The second step is to prove that $I(G_\omega)^n$ is Cohen-Macaulay by doing the same proof as in the proof of Theorem \ref{squarePowers}, using the fact that $I(G_\omega)^n$ is unmixed, which was proved in the previous step.
\end{proof}

By virtue of Theorem \ref{Tk}, for any  $\ell\geqslant 1$ and   any very well-covered graph $G$, we can construct a weight function  $\omega$ on $E(G)$ such that $I(G_\omega)^n$ is Cohen-Macaulay for all $n =1,\ldots, \ell$. However, a characterization of $G_\omega$ for which $I(G_\omega)^n$ is Cohen-Macaulay for a given integer $n$ is still unknown.

\medskip
Our calculation supports the following conjecture.

\begin{conj} Let $n\geqslant 2$ be an integer and let  $G$ be a very well-covered graph and $\omega$ be a weight function  on $E(G)$. Then  $I(G_\omega)^n$ is Cohen-Macaulay if and only if $I(G_\omega)^2$ is Cohen-Macaulay and $I(G_\omega)^n$ is unmixed.
\end{conj}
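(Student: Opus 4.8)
The plan is to prove both implications by reducing to the second-power characterization in Theorem \ref{squarePowers}, treating the unmixedness hypothesis as the device that transports the factor-$2$ weight inequalities up to all higher powers. Throughout I fix a labelling of $V(G)$ satisfying condition $(*)$.

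For the direction $(\Longrightarrow)$, the unmixedness of $I(G_\omega)^n$ is automatic, since a Cohen-Macaulay ideal is always unmixed. It remains to deduce that $I(G_\omega)^2$ is Cohen-Macaulay, and by Theorem \ref{squarePowers} this amounts to verifying the two weight inequalities (1) and (2) stated there. The key observation is that these are precisely the conclusions of Lemmas \ref{VC1} and \ref{VC2}, whose proofs are assembled from the pendant-deletion step (Lemma \ref{L2}), the edge-contraction reduction (Lemma \ref{reduceEdges}), localization (Lemma \ref{localization}), and the single numerical input of Lemma \ref{twotimes}. First I would record that all of these tools are insensitive to the exponent: Lemmas \ref{L2} and \ref{localization} hold for every $n$, the analogue of Lemma \ref{reduceEdges} for $n$-th powers follows verbatim once one knows $I(G_\omega)^n$ is unmixed, and Lemma \ref{twotimes} is already stated for arbitrary $n\geqslant 2$ with the constant $2$ appearing on the right-hand side. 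Re-running the inductions of Lemmas \ref{VC1} and \ref{VC2} with $n$ in place of $2$ therefore yields the very same factor-$2$ inequalities, and Theorem \ref{squarePowers} then gives that $I(G_\omega)^2$ is Cohen-Macaulay.

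For the direction $(\Longleftarrow)$, I would mimic the sufficiency part of Theorem \ref{squarePowers}, inducting on $m(G_\omega)=\sum_{i=1}^t\deg_G(y_i)$, but carrying the unmixedness hypothesis along the induction. Since $I(G_\omega)^2$ is Cohen-Macaulay, Theorem \ref{squarePowers} supplies the factor-$2$ inequalities (1) and (2); these are preserved by the edge-contraction passage from $G_\omega$ to the graph $G'_{\omega'}$ constructed in that proof, and $m(G'_{\omega'})<m(G_\omega)$. The new point, compared with the second-power argument, is that one cannot invoke an analogue of Lemma \ref{bUM} to manufacture unmixedness from the weight inequalities, because for $n\geqslant 3$ the factor-$2$ conditions need not force $I(G_\omega)^n$ to be unmixed. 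Instead I would show directly that unmixedness of $I(G_\omega)^n$ is inherited by $I(G'_{\omega'})^n$ under the contraction, so that the induction hypothesis applies to $G'_{\omega'}$. Once $I(G'_{\omega'})^n$ is Cohen-Macaulay, the isomorphism $(R/I(G_\omega)^n)/(y_\ell-x_\ell)\cong (R/I(G'_{\omega'})^n)/(y_\ell-x_\ell)$ together with the regularity of $y_\ell-x_\ell$ on $R/I(G_\omega)^n$, which follows from $\ass(R/I(G_\omega)^n)=\ass(R/I(G))$ and Lemma \ref{cmCover} exactly as in Theorem \ref{squarePowers}, upgrades Cohen-Macaulayness from $G'_{\omega'}$ to $G_\omega$.

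The main obstacle is the base case $m(G_\omega)=t$, where $G$ carries a perfect matching of pendant edges $x_1y_1,\ldots,x_ty_t$ and condition (2) degenerates, leaving only the factor-$2$ bound $\omega(x_iy_i)\geqslant 2\,\omega(x_ix_j)$ on the graph $G[\{x_1,\ldots,x_t\}]$. Here I would argue by polarization, following Theorem \ref{Pn} and Lemma \ref{lemPol}: the goal is to identify $(I(G_\omega)^n)^{\pol}$ with the polarization of the Artinian ideal $J^n$ obtained by collapsing each pair $x_i,y_i$ to a single variable $u_i$. In Theorem \ref{Pn} this identification rests on the strong hypothesis $\omega(x_iy_i)\geqslant n\,\omega(x_ix_j)$, which here is replaced only by $\omega(x_iy_i)\geqslant 2\,\omega(x_ix_j)$ coming from $I(G_\omega)^2$. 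The crux is therefore to show that, in the presence of unmixedness of $I(G_\omega)^n$, the factor-$2$ bound already suffices to force the polarization identity $f^{\pol}=g^{\pol}$ for every length-$n$ product $f$ of generators; I expect this to be the delicate combinatorial heart of the conjecture, since it is precisely the place where unmixedness must be converted into quantitative control of the exponents $\deg_{x_i}(f)$ and $\deg_{y_i}(f)$. The results quoted in the introduction for paths of length $3$ and for the complete-graph case, where the factor-$2$ condition alone already yields Cohen-Macaulayness of every power, strongly suggest that this step holds in general and provide a template for the required bookkeeping.
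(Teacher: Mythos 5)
This statement is not a theorem of the paper: it appears as an open \emph{conjecture} (the authors say only that their calculations support it, and the partial evidence consists of Theorem \ref{Tk}, Proposition \ref{n2}, Theorem \ref{thmStar} and Proposition \ref{triangle}). So there is no proof in the paper to compare against, and your proposal has to stand on its own. It does not: it is a program rather than a proof, and the steps you defer are exactly the ones that constitute the conjecture. In the $(\Longrightarrow)$ direction you assert that Lemmas \ref{VC1} and \ref{VC2} ``re-run verbatim'' with $n$ in place of $2$; but the engine of those lemmas, Lemma \ref{TwoQ}, is genuinely $n=2$-specific --- its Case 3 hinges on testing the particular monomials $x_1^{\omega_{11}}y_t^{\omega_{kt}}$ and $x_1^{\omega_{1k}}y_t^{\omega_{tt}}$ against the decomposition $Q_1^2\cap Q_2^2$, and the divisibility analysis there does not transfer to $Q_1^n\cap Q_2^n$ without new choices and new estimates. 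In the $(\Longleftarrow)$ direction you assert, without proof, that unmixedness of $I(G_\omega)^n$ descends to the contracted graph $I(G'_{\omega'})^n$; that is a real obligation, not a routine check.

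The most serious problem is the base case of your $(\Longleftarrow)$ induction. The identity you propose to force --- $f^{\pol}=g^{\pol}$ for every product $f$ of $n$ generators, assuming only the factor-$2$ inequalities plus unmixedness --- is not merely ``delicate''; it is false. The mechanism of Theorem \ref{Pn} requires that the total contribution of the non-matching generators to $\deg_{x_i}(f)$ never exceed $\omega(x_iy_i)$, so that the interleaved polarization of powers of $x_i$ never spills into the blocks reserved for $y_i$; this is precisely what the hypothesis $\omega(x_iy_i)\geqslant n\,\omega(x_ix_j)$ buys, and factor $2$ does not. Concretely, take the star of Theorem \ref{thmStar} with $t=3$, center $x_3$, and weights $\omega(x_1x_3)=1$, $\omega(x_2x_3)=2$, $\omega(x_1y_1)=2$, $\omega(x_2y_2)=\omega(x_3y_3)=4$. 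All conditions of Theorem \ref{thmStar} are satisfied, so $I(G_\omega)^n$ is Cohen-Macaulay (hence unmixed) for every $n\geqslant 1$, and in particular $I(G_\omega)^2$ is Cohen-Macaulay; moreover every $y_i$ is a leaf, so this graph lies in your base case. Yet for $n=3$ and the product $f=(x_1x_3)^3$ one has $\deg_{x_1}(f)=3>2=\omega(x_1y_1)$, and the paper's interleaved polarization gives $(x_1^3)^{\pol}=x^1_1x^1_2x^1_5$, while $(u_1^3)^{\pol}=x^1_1x^1_2x^1_3$; so $f^{\pol}\neq g^{\pol}$ even though the conclusion you want still holds (Theorem \ref{thmStar} proves it by an unmixedness argument combined with specialization along the regular elements $y_k-x_k$, not by polarization). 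Thus the route you identify as the ``combinatorial heart'' cannot be repaired in the form you describe, and the conjecture remains open after your proposal.
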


In the rest of this section, we are interested in characterizing $G_\omega$ such that  $I(G_\omega)^n$ is Cohen-Macaulay for all $n\geqslant 1$ in the case where  $G$ is a tree. The structure of the graph $G$ is given in \cite[Theorem 6.3.4]{Vi}: $G$ has a perfect matching $\{x_1y_1,\ldots, x_ty_t\}$, where each $y_i$ is a leaf vertex, and $G[x_1,\ldots,x_t]$ is a tree.

\medskip
For the case $t = 2$, $G[x_1,x_2]$ is  only one  edge, and we have the following result.

\begin{prop}\label{n2} Let $G_\omega$ be a weighted tree with the edge set $E(G) = \{ab,ax,by\}$. Then $I(G_\omega)^n$ is Cohen-Macaulay for all $n\geqslant 1$ if and only if $\min\{\omega(ax), \omega(by)\} \geqslant 2 \omega(ab)$.
\end{prop}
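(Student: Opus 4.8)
The plan is to prove both directions of Proposition \ref{n2} for the path $G$ of length $3$ with edges $ab, ax, by$, where $a$ and $b$ are the internal vertices and $x,y$ are the leaves. Write $p = \omega(ax)$, $k = \omega(ab)$, $q = \omega(by)$, so that $I(G_\omega) = ((ax)^p, (ab)^k, (by)^q)$, and the goal is to show $I(G_\omega)^n$ is Cohen-Macaulay for all $n \geqslant 1$ if and only if $\min\{p,q\} \geqslant 2k$.

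For the forward direction $(\Longrightarrow)$, I would simply specialize the already-proved Lemma \ref{twotimes}. Since $I(G_\omega)^n$ is Cohen-Macaulay for all $n \geqslant 1$, in particular it is Cohen-Macaulay for $n = 2$, and Lemma \ref{twotimes} (applied with the edge labels $\omega(ax)=p$, $\omega(ab)=k$, $\omega(by)=q$) immediately gives $\min\{p,q\} \geqslant 2k$. This direction is essentially free.

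For the backward direction $(\Longleftarrow)$, I would invoke Theorem \ref{Pn}. The path $G$ has the perfect matching $\{ax, by\}$ consisting of pendant edges, with leaf vertices $x$ and $y$ and internal vertices $a, b$. Here the roles of $x_i y_i$ in Theorem \ref{Pn} are played by $ax$ and $by$, so $\omega(x_1 y_1) = p$ and $\omega(x_2 y_2) = q$, while the only internal edge is $ab$ with weight $\omega(x_1 x_2) = k$. The hypothesis $\min\{p,q\} \geqslant 2k$ says precisely that $\omega(x_i y_i) \geqslant 2\,\omega(x_1 x_2)$ for $i = 1,2$, so Theorem \ref{Pn} applies with the integer $k$ there taken to be $2$. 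That theorem then yields that $I(G_\omega)^n$ is Cohen-Macaulay for $n = 1, 2$. To reach all $n \geqslant 1$, I would argue that the condition $\min\{p,q\} \geqslant 2k \geqslant kn/\lfloor n/2\rfloor \cdots$ — more directly, since $\min\{p,q\} \geqslant 2k$ we in fact have $\omega(x_i y_i) \geqslant k' \omega(x_1 x_2)$ with $k' = \lfloor \min\{p,q\}/k \rfloor$ arbitrarily large only if the ratio is large, so the clean statement is to note that the hypothesis of Theorem \ref{Pn} with parameter $k_0 = 2$ guarantees Cohen-Macaulayness only up to $n = 2$.

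The main obstacle, and the step requiring genuine care, is bridging from Cohen-Macaulayness for $n \leqslant 2$ up to \emph{all} $n \geqslant 1$. The correct route is to observe that under $\min\{p,q\}\geqslant 2k$ the ideal $I(G_\omega)$ is, after the substitution identifying each pendant vertex with its internal neighbor (as in the proof of Theorem \ref{Pn}), equivalent via polarization to an Artinian ideal in a two-variable ring $K[u_1,u_2]$, and an Artinian ideal is automatically Cohen-Macaulay for every power. Concretely, I would reapply the polarization argument of Theorem \ref{Pn} but verify that the key claim $f^{\pol} = g^{\pol}$ holds for products of $n$ generators for \emph{every} $n$ — this is where the strict inequality $\min\{p,q\}\geqslant 2k$ (rather than merely $\geqslant k$) is used, since it ensures that in any product of $n$ generators the total exponent of $u_i$ contributed by pendant-edge generators $u_i^{2m_i}$ never overlaps destructively with contributions from the internal-edge generator $(u_1u_2)^k$ upon polarization. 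Once the claim is checked for all $n$, one concludes $(I^n)^{\pol} = (J^n)^{\pol}$ with $J$ Artinian, whence $I(G_\omega)^n$ is Cohen-Macaulay for every $n \geqslant 1$ by Lemma \ref{lemPol}. I would flag the polarization-compatibility check as the only place where the factor of $2$ is essential and where a careful exponent bookkeeping is needed.
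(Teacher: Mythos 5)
Your forward direction is fine and matches the paper: Cohen--Macaulayness at $n=2$ plus Lemma \ref{twotimes} gives $\min\{p,q\}\geqslant 2k$. The backward direction, however, has a genuine gap at exactly the step you flagged. The polarization identity $(I^n)^{\pol}=(J^n)^{\pol}$ from the proof of Theorem \ref{Pn} does \emph{not} hold for all $n$ under the fixed hypothesis $\min\{p,q\}\geqslant 2k$; it requires $\min\{p,q\}\geqslant nk$, which is precisely why Theorem \ref{Pn} with parameter $2$ stops at $n=2$. Concretely, take $p=q=2k$ and $n=3$. Then $f=\bigl((ab)^k\bigr)^3=a^{3k}b^{3k}$ is a minimal generator of $I^3$, and in the interleaved-block polarization of Theorem \ref{Pn} (with $m_1=p=2k$) the factor $a^{3k}$ polarizes to $x^1_1\cdots x^1_{2k}\,x^1_{4k+1}\cdots x^1_{5k}$, skipping the odd block reserved for the leaf $x$, whereas the corresponding generator $(u_1u_2)^{3k}$ of $J^3$ polarizes to $x^1_1\cdots x^1_{3k}x^2_1\cdots x^2_{3k}$. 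A degree count shows no generator of $(I^3)^{\pol}$ divides the latter monomial, so $(I^3)^{\pol}\ne (J^3)^{\pol}$ and your bridge collapses; the ``exponent bookkeeping'' you proposed to carry out cannot succeed, since the overlap you hoped to exclude genuinely occurs once $nk>\min\{p,q\}$.

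The paper's proof of this direction is entirely different and avoids polarization. It writes down the irredundant primary decomposition $I=(a^kb^k,a^p,b^q)\cap(a^k,y^q)\cap(b^k,x^p)$ and proceeds in two steps. Step 1 shows $I^n=I^{(n)}$ for every $n$ by an exchange argument: in a representation of an element of $(a^kb^k,a^p,b^q)^n$ choose the expression minimizing the number of factors $a^p$ and $b^q$; if both occur, the hypothesis $\min\{p,q\}\geqslant 2k$ lets one trade the pair $a^p\cdot b^q$ for $(a^kb^k)^2$ times a monomial, contradicting minimality. The crucial point --- and the idea your approach is missing --- is that this exchange is \emph{local}, involving only two factors at a time, so the fixed factor $2$ suffices uniformly in $n$. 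Step 2 then shows $I^n$ is Cohen--Macaulay via Lemma \ref{lemCM}: since $\ass(R/I^n)=\{(a,b),(a,y),(b,x)\}$, the only possible non--Cohen--Macaulay radical of a colon ideal is $(a,y)\cap(b,x)$, and a second exchange argument shows any monomial $f\in(a^kb^k,a^p,b^q)^n$ already lies in $(a^k,y^q)^n$ or $(b^k,x^p)^n$, ruling this out. To repair your proof you would need to replace the polarization bridge with an argument of this kind (or any other argument that exploits the decomposition rather than the generator-by-generator combinatorics of polarization).
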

\begin{proof} For simplicity, let $I = I(G_\omega)$ and $p=\omega(ax), k= \omega(ab), q = \omega(by)$. Then  the weighted graph $G_\omega$ is shown as in Figure \ref{pqTree} and $I = (a^px^p,a^kb^k, b^qy^q)$.

If $I^n$ is Cohen-Macaulay for all $n\geqslant 1$, then $\min\{p,q\} \geqslant 2k$ by Lemma \ref{twotimes}.

Conversely, suppose that $\min\{p,q\}\geqslant 2k$. Then $I$ has an irredundant primary decomposition
$$I = (a^kb^k, a^p,b^q) \cap (a^k,y^q) \cap (b^k,x^p).$$

For any integer $n\geqslant 1$, we will prove that $I^n$ is Cohen-Macaulay  in the following two steps.

{\it Step $1$}: We prove that $I^n = I^{(n)}$, where $I^{(n)}$ is the $n$-th symbolic power of $I$. Let $f$ be any monomial of $I^{(n)}$. Since $I^{(n)}=(a^kb^k, a^p,b^q)^n \cap (a^k,y^q)^n \cap (b^k,x^p)^n$, we can write $f = ga^{ku}b^{kv}x^{(n-v)p}y^{(n-u)q}$, where $g$ is a monomial in $R$ and $u,v \in \{0,\ldots,n\}$ such that $ga^{ku}b^{kv}\in (a^kb^k, a^p,b^q)^n$. So we can write
$$ga^{ku}b^{kv} = h(a^kb^k)^s (a^p)^i (b^q)^j,$$
where $h$ is a monomial in $R$ and $s+i+j = n$. We choose such an expression  for which $i+j$ is minimal. If $i\geqslant 1$ and $j \geqslant 1$, then
$$ga^{ku}b^{kv} = (ha^{p-2k}b^{q-2k})(a^kb^k)^{s+2} (a^p)^{i-1} (b^q)^{j-1}$$
since $\min\{p,q\}\geqslant 2k$. This contradicts the minimality of $i+j$, so either $i = 0$ or $j=0$. Without loss of generality, we can assume that $j = 0$.

If $i = 0$, then $s = n$ and  $ga^{ku}b^{kv} = h(a^kb^k)^n$. It follows that $f=ga^{ku}b^{kv}x^{(n-v)p}y^{(n-u)q}$ $= h(a^kb^k)^nx^{(n-v)p}y^{(n-u)q}\in I^n$. Suppose  $i > 0$. Then we have $b^k \nmid h$. Because if $b^k \mid h$,  we can write
$$ga^{ku}b^{kv} = h'(a^kb^k)^{s+1} (a^p)^{i-1}$$
where $h' = a^{p-k}h/b^k$, which also contradicts the minimality of $i+j$. So $b^k \nmid h$. Together with the equality $ga^{ku}b^{kv} = h(a^kb^k)^s (a^p)^i$, this forces $v\leqslant s$, so
$i = n-s \leqslant n-v$. It follows that
$$f = h(a^kb^k)^s (a^p)^i x^{(n-v)p}y^{(n-u)q}=hy^{(n-u)q}x^{(n-v-i)p} (a^kb^k)^s (a^px^p)^i\in I^n,$$
since $i+s=n$. So we always have $f\in I^n$, and hence $I^n = I^{(n)}$.

{\it Step $2$}:  We prove that $I^n$ is Cohen-Macaulay. Suppose, on the contrary, that $I^n$ is not Cohen-Macaulay.  Since $I^n = I^{(n)}$, it is unmixed, so $\sqrt{I^n\colon f}$ is not Cohen-Macaulay for some monomial $f\notin I^n$ by Lemma \ref{lemCM}.

Note that $\ass(R/I^n) = \{(a,b),(a,y),(b,x)\}$. So $\sqrt{I^n\colon f}$ is not Cohen-Macaulay if and only if $\sqrt{I^n\colon f} = (a,y) \cap (b,x)$. This means that
$$f\in (a^kb^k, a^p,b^q)^n, f \notin  (a^k,y^q)^n, \text{ and } f \notin (b^k,x^p)^n.$$

Since $f\in (a^kb^k, a^p,b^q)^n$, we can write $f = f_1(a^kb^k)^s(a^p)^i(b^q)^j$, where $f_1$ is a monomial in $R$ and $s+i+j = n$. Choose such an expression for which $i+j$ is minimal. By the same argument as in the previous step, we have either $i=0$ or $j=0$. Without loss of generality, we can assume that $j =0$. It follows that $f =f_1(a^kb^k)^s(a^p)^i =f_1b^{ks}a^{(p-k)i}(a^k)^n\in (a^k,y^q)^n$, a contradiction. So $I^n$ is Cohen-Macaulay, and the proof is now complete.
\end{proof}

We now consider the case $t=3$, in which case $G[x_1,x_2,x_3]$ is just a path of length $3$. For simplicity, consider the weighted tree $G_\omega$ with the edge set $$E(G) = \{ab,bc,ax,by,cz\}$$
and the weight  function
$$k = \omega(ab), m = \omega(bc), p = \omega(ax), q = \omega(by), r = \omega(cz),$$
which is shown in Figure \ref{pqrTree}.

\begin{figure}[ht]
\includegraphics[scale=0.5]{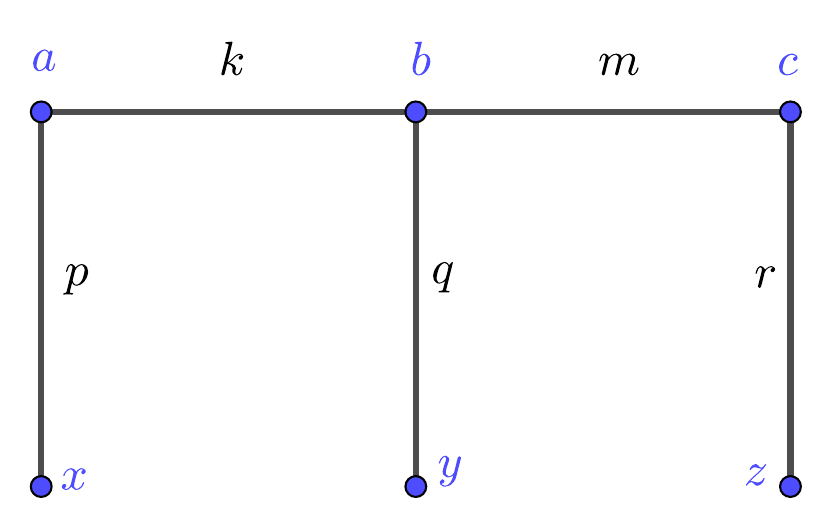}\\
\medskip
\caption{The weighted graph $G_\omega$.}
\label{pqrTree}
\end{figure}   

\begin{lem}\label{dif} Let $G_\omega$ be a weighted graph as shown in Figure \ref{pqrTree}. If $m = k$, then $I(G_\omega)^n$ is not Cohen-Macaulay for all $n\geqslant \max\{p,r\}/m+2$.
\end{lem}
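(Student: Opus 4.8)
The plan is to exhibit, for each $n \geqslant \max\{p,r\}/m + 2$, a monomial $f \notin I^n$ for which $\sqrt{I^n \colon f}$ fails to be Cohen-Macaulay, and then invoke Lemma \ref{lemCM}. Since $m = k$, the ideal is $I = (a^kb^k, b^kc^k, a^px^p, b^qy^q, c^rz^r)$, and its minimal primes correspond to the minimal vertex covers of the underlying path on $a,b,c$ together with its pendant edges. I would first record the irredundant primary decomposition of $I^n$ under the assumption that $I^n$ is unmixed: if $I^n$ is \emph{not} unmixed we are already done since a non-unmixed ideal cannot be Cohen-Macaulay, so the interesting case is when $I^n = I^{(n)}$ and the associated primes are exactly those of $I(G)$. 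The key observation driving the whole argument is that when $m = k$ the two edges $ab$ and $bc$ carry equal weight, so the vertex $b$ becomes "shared" symmetrically between $a$ and $c$; this is what lets a bad monomial simultaneously sit inside the components forcing $\{a,c\}$ (or their partners) into the radical while staying out of $I^n$.

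The heart of the proof is the construction of the witness monomial. I would aim for something of the shape $f = a^{\alpha} b^{\beta} c^{\gamma}$ (possibly with small $x,y,z$ adjustments) where the exponents are calibrated so that $f$ lies in all the primary components whose radical avoids, say, $\{a,c\}$, but $f$ fails to lie in the component(s) that would make the radical unmixed. Concretely, the target is to force $\sqrt{I^n \colon f}$ to equal an intersection like $(a,z)\cap(b,x)\cap(c, \text{?})$ or some analogous intersection of height-$2$ primes whose union corresponds to a disconnected/non-Cohen-Macaulay simplicial complex — exactly the mechanism used in Lemma \ref{twotimes}, where $\sqrt{I^n\colon f} = (a,y)\cap(b,x)$ was shown to be non-Cohen-Macaulay. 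The inequality $n \geqslant \max\{p,r\}/m + 2$ is precisely the threshold that guarantees enough room in the exponents: with $n$ this large one can pay the cost of $(n-2)$ copies of $(ab)^k$ or $(bc)^k$, reserve degree for the pendant generators $a^px^p$ and $c^rz^r$, and still have the total $b$-degree fall strictly below what membership in $I^n$ would demand. I would verify membership in the relevant components by the same "minimal $i+j$ expression" bookkeeping used in Proposition \ref{n2}, and verify non-membership in the remaining component by a degree count in $a$ or $c$.

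The main obstacle I anticipate is pinning down the exact exponents of the witness $f$ so that all three conditions hold at once: (i) $f \notin I^n$, (ii) $f$ lies in precisely the collection of primary components making the radical a union of at least two primes that do not meet in a Cohen-Macaulay way, and (iii) these work uniformly for every $n$ above the stated bound rather than just the boundary value. The bound $\max\{p,r\}/m$ strongly suggests that the binding constraint involves comparing the pendant weights $p, r$ against the shared edge weight $m=k$, so the exponent on $b$ should be tuned against $\max\{p,r\}$; getting the arithmetic of $(u-2)k$-type inequalities to line up — as in the final displayed chain of Lemma \ref{twotimes} — is the delicate part. Once $f$ is correctly specified, computing $\sqrt{I^n\colon f}$ reduces to checking, component by component, whether $f$ is in each $Q_i^n$, which is routine given the explicit primary decomposition; the final step is to observe that the resulting radical is a disconnected intersection of primes and hence, by the same reasoning as in Lemma \ref{twotimes}, not Cohen-Macaulay, so $I^n$ is not Cohen-Macaulay for all $n \geqslant \max\{p,r\}/m + 2$.
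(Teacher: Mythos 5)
Your high-level strategy is the same as the paper's (produce a monomial witness $f$, compute $\sqrt{I^n\colon f}$ componentwise from the primary decomposition, and invoke Lemma \ref{lemCM}), but the proposal stops exactly where the actual mathematical content of the proof begins, and it is missing two ingredients without which the plan cannot be executed. First, the paper does not start from unmixedness alone: assuming $I^n$ is Cohen-Macaulay, it first applies Lemma \ref{L2} to the subgraphs $G\setminus c$ and $G\setminus a$ and then Lemma \ref{twotimes} to conclude $\min\{p,q,r\}\geqslant 2m$. The inequality $q\geqslant 2m$ is indispensable, not a convenience: the paper's witness is $f=(b^m)^{n-1}a^pc^r$, and if one only had $q=m$ (unmixedness via Lemma \ref{L4} gives merely $q\geqslant m$), then $f=(b^q)^{n-1}a^p\cdot c^r$ would lie in the $(a,b,c)$-primary component raised to the $n$-th power, the prime $(a,b,c)$ would drop out of the radical, and no contradiction results. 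Your sketch never derives these inequalities, yet your "verify non-membership by a degree count in $a$ or $c$" tacitly needs them: in the paper the non-membership of $f$ in $((ab)^m,(bc)^m,a^p,b^q,c^r)^n$ is a four-case analysis of the system $p\geqslant ms+pu$, $m(n-1)\geqslant ms+mt+qv$, $r\geqslant mt+rw$, in which both $q\geqslant 2m$ and the threshold $n\geqslant \max\{p,r\}/m+2$ are used; a degree count in $a$ or $c$ alone does not rule out the decompositions with $u=w=0$ or $u=w=1$.

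Second, the witness itself is never specified, and the primary decomposition you would have to manage is larger than you indicate. The paper first localizes at $\{y\}$, replacing $I$ by $J=I_{\{y\}}$ (legitimate by Lemma \ref{localization}); this removes the fifth minimal prime $(a,y,c)$ and leaves exactly four components,
$$J = ((ab)^m,(bc)^m,a^p,b^q,c^r)\cap(a^p,b^m,z^r)\cap(b^m,c^r,x^p)\cap(b^m,x^p,z^r),$$
after which $f=(b^m)^{n-1}a^pc^r$ gives $\sqrt{J^n\colon f}=(a,b,c)\cap(b,x,z)$, which is not Cohen-Macaulay. Working directly with $I^n$ as you propose is possible in principle, but then you must also decide whether $f$ lies in $(a^m,y^q,c^m)^n$, and the answer genuinely depends on $n$, $p$, $r$; this case analysis is avoidable only by the localization step you omit. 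Note also that the relevant primes here have height $3$, not height $2$ as your guess $(a,y)\cap(b,x)$ from Lemma \ref{twotimes} suggests. In short: the outline is right, but the preliminary reduction $\min\{p,q,r\}\geqslant 2m$, the localization at $y$, the explicit witness, and the case analysis proving non-membership are precisely the proof, and all four are absent.
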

\begin{proof} Let $I = I(G_\omega)$, then $I = ((ab)^m, (bc)^m, (ax)^p,(by)^q,(cz)^r)$. Suppose by contradiction that $I(G_\omega)^n$ is Cohen-Macaulay for some  $n\geqslant \max\{p,r\}/m+2$.

First, we claim  that $\min\{p,q,r\} \geqslant 2m$. Indeed, let $H = G\setminus c$ so that $I(H_\omega) = ((ab)^m, (ax)^p, (by)^q)$. By Lemma \ref{L2}, $I(H_\omega)^n$ is Cohen-Macaulay, and hence $\min\{p, q\} \geqslant 2m$ by Lemma \ref{twotimes}. Similarly, by considering the graph $G\setminus a$, we obtain $r \geqslant 2m$, and the claim follows.

Let $J = I_{\{y\}}$ be  the localization of $I$ with respect to the set $\{y\}$, then $J= ((ab)^m, (bc)^m,(ax)^p,
b^q,(cz)^r)$. Using the  above claim, we can check that 
$$J = ((ab)^m, (bc)^m, a^p,b^q,c^r) \cap (a^p,b^m,z^r)\cap(b^m,c^r,x^p)\cap (b^m,x^p,z^r)$$
is an irredundant decomposition.

Since $I^n$ is Cohen-Macaulay,  $J^{n}$ is also Cohen-Macaulay by Lemma \ref{localization}. Consequently,
$$J^n = ((ab)^m, (bc)^m, a^p,b^q,c^r)^n \cap (a^p,b^m,z^r)^n\cap(b^m,c^r,x^p)^n\cap (b^m,x^p,z^r)^n.$$

Let $f=(b^m)^{n-1}a^p c^r$, then $f\in (a^p,b^m,z^r)^n\cap(b^m,c^r,x^p)^n$ and
$f \notin (b^m,x^p,z^r)^n$. Now we show that $f\notin ((ab)^m, (bc)^m, a^p,b^q,c^r)^n$.  If this is not the case, then $f$ could be written as
$$f = g((ab)^m)^s((bc)^m)^t(a^p)^u(b^q)^v(c^r)^w,$$
where $s,t,u,v,w\in \N$ with $s+t+u+v+w = n$ and $g$ is a monomial in $R$. Therefore,
$$
\begin{cases}
    p \geqslant ms + pu\\
    m(n-1)\geqslant ms+mt+qv\\
    r\geqslant mt + rw
\end{cases}
$$
The inequality $p \geqslant ms + pu$ forces $u\leqslant 1$. Similarly,  the inequality $r\geqslant mt + rw$ forces  $w\leqslant 1$.  So we have four possible cases:

{\it Case $1$}: If $u = w = 0$, then $s+t+v = n$. Since $q \geqslant 2m$, we have
$$m(n-1)\geqslant ms+mt+qv \geqslant ms+mt+2mv = m(s+t+v)+mv \geqslant mn,$$
a contradiction.

{\it Case $2$}: If $u=1$ and  $w = 0$, then $s+t+v = n-1$, and then
$$m(n-1) \geqslant ms+mt+2mv = m(s+t+v)+mv \geqslant m(n-1)+mv.$$
This implies $v = 0$. Since $u=1$,  combined with the inequality $p\geqslant ms +pu$, we get $s=0$. Thus
$t = n-(s+u+v+w) = n-1$.

On the other hand, since $r\geqslant mt+rw$ and $w=0$, we get $n = t+1 \leqslant r/m + 1$. This contradicts the assumption that  $n\geqslant \max\{p,r\}/m+2$.

{\it Case $3$}: When $u=0$ and $w = 1$. We derive a contradiction by the same argument as in the previous case.

{\it Case $4$}: If $u=w= 1$, then, in this case, we must have $s = t =0$. Hence, $v = n-(s+t+u+w)=n-2$, and hence
$m(n-1)\ge ms+mt+qv \geqslant q(n-2)\geqslant 2m(n-2)$. This forces $n\leqslant 3$, a contradiction.

\medskip

To sum up, all cases are impossible. It follows that $f\notin ((ab)^m, (bc)^m, a^p,b^q,c^r)^n$. Hence $J^n \colon f = ((((ab)^m, (bc)^m, a^p,b^q,c^r)^n)\colon f) \cap  ((b^m,x^p,z^r)^n\colon f)$,
and hence $\sqrt{J^n\colon f} = (a,b,c) \cap (b,x,z)$. In particular, $\sqrt{J^n\colon f}$ is not Cohen-Macaulay, it contradicts Lemma \ref{lemCM}.  Therefore, $I(G_\omega)^n$ is not Cohen-Macaulay for all $n\geqslant \max\{p,r\}/m + 2$, and the proof is complete.
\end{proof}

\begin{lem}\label{pqr} Let $G_\omega$ be a weighted tree as  shown in Figure \ref{pqrTree}, with $k < m$. If $I(G_\omega)^n$ is Cohen-Macaulay for all $n\geqslant 1$, then $p\geqslant k\lceil m/(m-k)\rceil$, where  $\lceil m/(m-k)\rceil$ is the smallest integer $\ge  m/(m-k)$.
\end{lem}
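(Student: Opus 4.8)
The plan is to argue by contraposition: assuming $I(G_\omega)^n$ is Cohen-Macaulay for all $n$ while $p < k\lceil m/(m-k)\rceil$, I will manufacture a single power that fails to be Cohen-Macaulay. Following the pattern of Lemma \ref{dif}, the first step is to pass to the localization $J := I(G_\omega)_{\{y\}}$; by Lemma \ref{localization} each $J^n$ is Cohen-Macaulay, hence unmixed, so $J^n = J^{(n)}$ and I may read it off from the primary decomposition of $J$. Inverting $y$ turns $(by)^q$ into $b^q$, and a direct computation gives $J = Q_1\cap Q_2\cap Q_4\cap Q_5$ with radicals $\p_1=(a,b,c)$, $\p_2=(a,b,z)$, $\p_4=(x,b,c)$, $\p_5=(x,b,z)$ (the component at $(a,y,c)$ is killed by inverting $y$). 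Because $k<m$, the components simplify to $Q_4=(x^p,b^k,c^r)$, $Q_5=(x^p,b^k,z^r)$, $Q_2=((ab)^k,a^p,b^m,z^r)$, and $Q_1=((ab)^k,(bc)^m,a^p,b^q,c^r)$, and $J^n=Q_1^n\cap Q_2^n\cap Q_4^n\cap Q_5^n$.

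Next I record which radical intersections are obstructions. The four primes correspond to the facets of the $4$-cycle on the vertex set $\{a,c,x,z\}$ (with $b$ a cone point): $\p_1\leftrightarrow\{x,z\}$, $\p_5\leftrightarrow\{a,c\}$, $\p_2\leftrightarrow\{x,c\}$, $\p_4\leftrightarrow\{a,z\}$. Checking all sub-intersections, only the two ``diagonal'' pairs $\p_1\cap\p_5=(a,b,c)\cap(x,b,z)$ and $\p_2\cap\p_4=(a,b,z)\cap(x,b,c)$ are non-Cohen-Macaulay, each being a union of two $2$-planes meeting only at the origin. By Lemma \ref{lemCM} it therefore suffices to produce an integer $n$ and a monomial $f\notin J^n$ with $\sqrt{J^n:f}=\p_1\cap\p_5$; since $\sqrt{J^n:f}=\bigcap_{f\notin Q_i^n}\p_i$, this means exactly $f\in Q_2^n\cap Q_4^n$ and $f\notin Q_1^n\cup Q_5^n$.

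I will look for $f$ of the shape $x^{\delta}a^{\alpha}b^{\beta}c^{r}$, where the two pairs of conditions decouple. Since $Q_4,Q_5$ differ only in $c^r$ versus $z^r$ and $f$ carries no $z$, one computes $f\in Q_5^n \iff \lfloor\delta/p\rfloor+\lfloor\beta/k\rfloor\geqslant n$ and $f\in Q_4^n \iff \lfloor\delta/p\rfloor+\lfloor\beta/k\rfloor\geqslant n-1$ (the extra $c^r$ frees one generator in $Q_4$ that has no counterpart in $Q_5$), so $f\in Q_4^n\setminus Q_5^n$ reduces to the single clean identity $\lfloor\delta/p\rfloor+\lfloor\beta/k\rfloor=n-1$. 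The condition $f\in Q_2^n\setminus Q_1^n$ depends only on $(\alpha,\beta)$ and the fixed $c$-degree $r$, and here lies the whole point: membership in $Q_2^n$ may raise the $b$-degree freely through the pure power $b^m$, whereas $Q_1$ can only do so through $(bc)^m$, which the $c$-budget $r$ caps at $\lfloor r/m\rfloor$ uses, or through $b^q$, which is expensive since $q>m$. So I will fix a $Q_2$-representation $(ab)^{ki}(a^p)^{u}(b^m)^{l}$ of $a^{\alpha}b^{\beta}$, set $\alpha,\beta$ to its $a$- and $b$-degrees, take $n$ large compared with $r/m$ so the $c$-budget genuinely binds, and choose $\delta$ from the displayed identity.

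The hard part is verifying $f\notin Q_1^n$ sharply, and this is exactly where the constant $\lceil m/(m-k)\rceil$ is forced. Blocking every $Q_1$-representation of $a^{\alpha}b^{\beta}c^{r}$ is an integer optimization in which $(ab)^k$ factors (cost $k$ in the $a$-budget $\alpha$, contribute $k$ to the $b$-degree) are traded against the $a^p$, $(bc)^m$ and $b^q$ factors under the $a$- and $c$-degree budgets. A real-relaxation count only rules out $Q_1$-membership for $p<mk/(m-k)$, while the claimed bound requires ruling it out for all $p<k\lceil m/(m-k)\rceil$; closing the remaining window $mk/(m-k)\leqslant p<k\lceil m/(m-k)\rceil$, which is nonempty precisely when $m/(m-k)\notin\Z$, is where the ceiling and the integrality of the number of $(ab)^k$-factors must be exploited, by tuning $l$ and $n$ so that a floor/ceiling inequality of the type $\lceil p(l-1)/(p-k)\rceil>\lfloor ml/k\rfloor$ (together with its analogues coming from the $(bc)^m$-representations) holds. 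I expect this integer analysis, rather than the homological input, to be the main obstacle.
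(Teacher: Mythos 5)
Your reduction is set up correctly as far as it goes: localizing at $\{y\}$, computing the four primary components $Q_1,Q_2,Q_4,Q_5$, observing that under your contradiction hypothesis $J^n=Q_1^n\cap Q_2^n\cap Q_4^n\cap Q_5^n$, that only the diagonal radical intersections $\p_1\cap\p_5$ and $\p_2\cap\p_4$ fail to be Cohen--Macaulay, and that $f\in Q_4^n\setminus Q_5^n$ amounts to $\lfloor\delta/p\rfloor+\lfloor\beta/k\rfloor=n-1$ --- all of this checks out. But the proof stops exactly where the lemma begins: producing, whenever $p<k\lceil m/(m-k)\rceil$, a monomial $f\in Q_2^n\cap Q_4^n$ with $f\notin Q_1^n\cup Q_5^n$. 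You concede that your counting argument only reaches $p<mk/(m-k)$ and that closing the window $mk/(m-k)\leqslant p<k\lceil m/(m-k)\rceil$ (nonempty exactly when $(m-k)\nmid m$, e.g.\ $k=3$, $m=5$, $p=8$) is ``the main obstacle''. That window is precisely where the ceiling in the statement has content, so the missing step is the mathematical heart of the lemma, not a technical loose end. Moreover, there is no a priori guarantee that a witness of your type exists there: what actually fails in that window is unmixedness (an embedded prime appears in a localization of $I^n$), and a failure of Cohen--Macaulayness of that kind need not be detectable as a disconnected colon radical built from the minimal components $Q_i^n$.

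The paper's proof runs the implication in the membership direction, which removes both difficulties at once. It localizes at $\{c,y\}$ rather than $\{y\}$: then $J=I_{\{c,y\}}=(a^kb^k,b^m,a^px^p,z^r)$ has just two components, $J=(a^kb^k,a^p,b^m,z^r)\cap(b^k,x^p,z^r)$ (your $Q_2\cap Q_5$). The monomial $f=a^pb^{m(m-1)}$ lies in the $m$-th power of each component by inspection ($f=a^p(b^m)^{m-1}$, and $(b^k)^m\mid b^{m(m-1)}$ since $k<m$), hence $f\in J^{(m)}$; Cohen--Macaulayness of $I^m$ gives unmixedness of $J^m$ via Lemma \ref{localization}, hence $f\in J^m$, hence $f=g(a^kb^k)^s(b^m)^t$ with $s+t=m$. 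Comparing $a$- and $b$-degrees yields $p\geqslant ks$ and $m(m-1)\geqslant ks+mt=m^2-(m-k)s$, so $(m-k)s\geqslant m$; since $s$ is an integer count of factors, $s\geqslant\lceil m/(m-k)\rceil$, and $p\geqslant ks$ gives the bound. In this direction no non-membership in any $Q_1^n$ ever has to be certified, and the integrality you were trying to extract from floor/ceiling estimates is automatic from $s\in\Z$. If you wish to keep your contrapositive framing, change the target from ``$\sqrt{J^n:f}$ is disconnected'' to ``$f\in J^{(m)}\setminus J^m$'': for this $f$ that is exactly what $p<k\lceil m/(m-k)\rceil$ forces, and it already contradicts unmixedness, hence Cohen--Macaulayness.
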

\begin{proof}
For simplicity, let $I = I(G_\omega)$, then $I = (a^kb^k, b^mc^m, a^px^p,b^qy^q,c^rz^r)$.
 Since $I^n$ is Cohen-Macaulay for all $n\geqslant 1$, we have $2k \leqslant \min\{p,q\}$ and $2m \leqslant \min\{q,r\}$ by Theorem \ref{squarePowers}.

Let $J = I_{\{c,y\}}$ be  the localization of $I$ with respect to the set 
$\{c,y\}$. Since $k < m < \min\{q,r\}$, we have
$$J = (a^kb^k, b^m, a^px^p,z^r).$$
Note also that $J$ has an irredundant primary decomposition as follows
$$J = (a^kb^k,a^p,b^m,z^r) \cap (b^k,x^p,z^r).$$

Let $f = a^pb^{m(m-1)}$, then $f \in (a^kb^k,a^p,b^m,z^r)^m$. On the other hand, since $k < m$, we have $m(m-1)\geqslant mk$, so $f \in (b^k,x^p,z^r)^m$. It follows that $f\in J^{(m)}$. In particular, $f \in J^m$ since $J^m$ is unmixed. So we can write $f$ as
$$f = g(a^kb^k)^s (b^m)^t$$
with $s+t = m$ and $g$ is a monomial in $R$. This yields
$$
\begin{cases}
    p \geqslant ks,\\
    m(m-1)\geqslant ks+mt.
\end{cases}
$$

Since $s+t = m$, the second inequality of the above system  gives $m(m-1)\geqslant ks+m(m-s)$, or equivalently $(m-k)s \geqslant m$. Hence, $s\geqslant m/(m-k)$, and hence $s\geqslant \lceil m/(m-k)\rceil$ since $s$ is an integer. Hence $p\geqslant ks \geqslant k\lceil m/(m-k)\rceil$, as required.
\end{proof}

\begin{lem}\label{rBound} Let $G_\omega$ be a weighted tree as shown in Figure \ref{pqrTree}, with $k<m$. If $I(G_\omega)^n$ is Cohen-Macaulay for all $n\geqslant 1$, then $r \geqslant m(\lceil m/(m-k)\rceil-2)$.
\end{lem}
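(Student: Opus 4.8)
The plan is to mirror the method of Lemmas \ref{dif} and \ref{pqr}: pass to a localization of $I=I(G_\omega)$ that keeps the edge $cz$ (so that the weight $r$ survives), exploit that Cohen--Macaulayness of $I^n$ descends to the localization, and test the Cohen--Macaulay criterion of Lemma \ref{lemCM} on a single well-chosen monomial. Concretely, I would set $J=I_{\{y\}}$, the localization of $I$ with respect to $\{y\}$, so that
$$J=(a^kb^k,\,b^mc^m,\,a^px^p,\,b^q,\,c^rz^r),$$
and record its irredundant primary decomposition $J=Q_1\cap Q_2\cap Q_3\cap Q_4$ with
$$\sqrt{Q_1}=(a,b,c),\quad \sqrt{Q_2}=(a,b,z),\quad \sqrt{Q_3}=(b,x,c),\quad \sqrt{Q_4}=(b,x,z),$$
where each $Q_i$ is obtained by collapsing the edges meeting the localized prime (this produces the ``pin'' generators $a^p$, $b^m$, $b^k$ and $c^r$ in the various components). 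Since $I^n$ is Cohen--Macaulay, so is $J^n$ by Lemma \ref{localization}; in particular $J^n$ is unmixed, hence $J^n=J^{(n)}=Q_1^n\cap Q_2^n\cap Q_3^n\cap Q_4^n$. Throughout I may use the inequalities already available under the hypothesis: $2m\leqslant r$ (from Theorem \ref{squarePowers}) and, crucially, $p\geqslant k\lceil m/(m-k)\rceil$ from Lemma \ref{pqr}, where I abbreviate $N=\lceil m/(m-k)\rceil$.

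The argument is by contradiction: assume $r\leqslant m(N-2)-1$ and produce an integer $n$ and a monomial $f$ for which $J^n\colon f$ has a non--Cohen--Macaulay radical, contradicting Lemma \ref{lemCM}. The target is a monomial $f$, supported essentially on $a,b,c$ (as in Lemma \ref{dif}, where the case $k=m$ is treated), whose membership pattern among the four components is exactly
$$f\in Q_2^n\cap Q_3^n,\qquad f\notin Q_1^n\cup Q_4^n,$$
so that $\sqrt{J^n\colon f}=\sqrt{Q_1}\cap\sqrt{Q_4}=(a,b,c)\cap(b,x,z)$. These two primes meet only in $b$, so $R/\big((a,b,c)\cap(b,x,z)\big)$ is a union of two codimension--$3$ linear spaces meeting in codimension $5$; it fails Hartshorne's connectedness in codimension one and is therefore not Cohen--Macaulay, which is the desired contradiction. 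To realize this pattern I would take $n$ large (to exhaust the $a$--degree budget, exactly as in Lemma \ref{dif}) and choose the exponents of $f$ so that $f$ is built cheaply inside $Q_2$ and $Q_3$ through the pin generators $b^m$, $b^k$, $a^p$, $c^r$, while forcing any expression of $f$ inside $J^n$ (respectively $Q_4^n$) to route its $b$--degree through the coupled edge generators $a^kb^k$, $b^mc^m$ or the expensive $b^q$.

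The heart of the proof---and the step I expect to be the main obstacle---is the simultaneous verification of these four membership conditions when $k<m$. The difficulty is genuinely sharper than in Lemma \ref{dif}: because $\sqrt{Q_4}$ supplies the cheap generator $b^k$ of weight $k<m$, excluding $f$ from $Q_4^n$ forces a tight upper bound on the $b$--degree of $f$, whereas keeping $f\in Q_2^n$ (whose cheap $b$--source $b^m$ has the larger weight $m$, used together with $a^p$) pulls in the opposite direction; reconciling the two is precisely what consumes the slack measured by $N$. The bound $p\geqslant kN$ is what lets the $a$--degree absorb the required copies of $a^kb^k$, and the inequality $2m\leqslant r$ controls how many copies of $c^r$ may appear; carrying out the generator count should then show that $f\notin Q_1^n$ can be arranged exactly while $r\leqslant m(N-2)-1$, with the surplus $-2$ coming from the two boundary slots occupied by $a^p$ and $c^r$ in the $Q_2$-- and $Q_3$--expressions of $f$. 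Once the witness is pinned down, assembling $\sqrt{J^n\colon f}=(a,b,c)\cap(b,x,z)$ and invoking Lemma \ref{lemCM} closes the contradiction and yields $r\geqslant m(N-2)$.
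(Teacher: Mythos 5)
You stop exactly where the proof has to happen: the witness monomial $f$, the exponent $n$, and the verification of the four membership conditions are never produced, and you yourself flag this step as the expected obstacle. Unfortunately the step is not merely hard; it is impossible, so the plan cannot be completed. Writing out the components of $J=I_{\{y\}}$ for $k<m\leqslant q$ (note they are not just ``pins'': the mixed generator $a^kb^k$ survives in the first two), one has $Q_1=(a^kb^k,b^mc^m,a^p,b^q,c^r)$, $Q_2=(a^kb^k,b^m,a^p,z^r)$, $Q_3=(b^k,x^p,c^r)$, $Q_4=(b^k,x^p,z^r)$. Now take any monomial $f=a^\alpha b^\beta c^\gamma$ (your ``supported essentially on $a,b,c$'' case; allowing $x,z$ only adds bookkeeping and does not help) with $f\in Q_2^n\cap Q_3^n$ and $f\notin Q_4^n$, and set $B=\lfloor\beta/k\rfloor$, $\Gamma=\lfloor\gamma/r\rfloor$. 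Since $f$ involves neither $x$ nor $z$, the condition $f\notin Q_4^n$ says exactly $B\leqslant n-1$, and $f\in Q_3^n$ says $B+\Gamma\geqslant n$, so $\Gamma\geqslant n-B\geqslant 1$. If $\alpha\geqslant kB$, then $(a^kb^k)^{B}(c^r)^{\,n-B}$ divides $f$, hence $f\in Q_1^n$. If instead $\alpha<kB$, pick a $Q_2$-certificate $u+v+s=n$ with $ku+mv\leqslant\beta$ and $ku+ps\leqslant\alpha<kB$; then $u<B-ps/k$, $\beta<k(B+1)$, and
\begin{align*}
mn &= mu+mv+ms \;\leqslant\; (m-k)u+\beta+ms\\
&< (m-k)\left(B-\frac{ps}{k}\right)+k(B+1)+ms \;=\; mB+k+s\cdot\frac{km-(m-k)p}{k}\;\leqslant\; mB+k,
\end{align*}
where the last inequality uses $(m-k)p\geqslant km$, i.e. $p\geqslant k\lceil m/(m-k)\rceil\geqslant km/(m-k)$, which is exactly the bound from Lemma \ref{pqr} that you quote as available. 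Hence $n<B+1$, contradicting $B\leqslant n-1$. So in every case $f\in Q_1^n$: your target pattern $f\in Q_2^n\cap Q_3^n$, $f\notin Q_1^n\cup Q_4^n$ can never be realized. Note that this argument nowhere used the contradiction hypothesis $r\leqslant m(N-2)-1$, so no cleverer choice of exponents can rescue it; moreover the opposite diagonal pattern ($f\in Q_1^n\cap Q_4^n$, $f\notin Q_2^n\cup Q_3^n$) is impossible as well, and every non-diagonal subset of $\{\sqrt{Q_1},\dots,\sqrt{Q_4}\}$ intersects to a Cohen--Macaulay ideal (the four primes form a $4$-cycle of planes), so within this localization there is no monomial whatsoever whose colon radical fails to be Cohen--Macaulay.

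The paper escapes this dead end by doing the opposite of what you attempt. It localizes at \emph{both} leaves, $J=I_{\{x,y\}}=(a^kb^k,b^mc^m,a^p,b^q,c^rz^r)$, so that $a$ and $b$ are pinned in every component and only two components remain, $J=(a^kb^k,b^mc^m,a^p,b^q,c^r)\cap(a^kb^k,a^p,b^m,z^r)$; the problematic primes containing $b^k$ and $x^p$ disappear. Then, with $i=\lceil m/(m-k)\rceil-1\geqslant 2$, it checks directly that $f=a^pb^{m(i-1)}c^r$ lies in $J^{(i)}$ (here Lemma \ref{pqr} gives $p\geqslant m(i-1)$), invokes unmixedness of $J^i$ --- the same consequence of Cohen--Macaulayness you use --- to conclude $f\in J^i$, and finally degree-counts an arbitrary factorization $f=g(a^kb^k)^u(b^mc^m)^v(a^p)^s(b^q)^t$ with $u+v+s+t=i$ to force $s=1$, $u=t=0$, $v=i-1$, whence $r\geqslant mv=m(i-1)$. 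In other words, the inequality is squeezed out of forcing the witness \emph{into} every component and analyzing how it must factor, not out of excluding it from some components; that reversal is the idea your proposal is missing.
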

\begin{proof} Let $I = I(G_\omega)$ and $i=\lceil m/(m-k)\rceil-1=\lceil k/(m-k)\rceil$. If $i=1$, the lemma is trivial, so we assume that $i\geqslant 2$. By Theorem \ref{squarePowers}, $\min\{p,q\}\geqslant 2k$ and $\min\{q,r\}\geqslant 2m$. By Lemma \ref{pqr}, we have
$$p\geqslant k\left\lceil \frac{m}{m-k}\right\rceil\geqslant k\frac{m}{m-k} = m\frac{k}{m-k}\geqslant m\left(\left\lceil \frac{k}{m-k}\right\rceil-1\right) = m(i-1).$$

Let $J = I_{\{x,y\}}$ be  the localization of $I$ with respect to the set 
$\{x,y\}$,  then $J= (a^kb^k,b^mc^m, a^p,b^q,c^rz^r)$ and it has an irredundant primary decomposition as follows
$$J = (a^kb^k,b^mc^m, a^p,b^q,c^r) \cap (a^kb^k, a^p,b^m,z^r).$$

Let $f =  a^pb^{m(i-1)}c^r$. Since $p \geqslant m(i-1)$, we have $(a^mb^m)^{i-1}c^r \mid f$, and so $f\in (a^kb^k,b^mc^m, a^p,b^q,c^r)^i$. Obviously, $f \in (a^kb^k, a^p,b^m,z^r)^i$, so $f \in J^{(i)}$. In particular, $f \in J^i$ because $J^i$ is unmixed. Therefore, we can write $f$ as
$$f = g(a^kb^k)^u (b^mc^m)^v (a^p)^s (b^q)^t,$$
where $u+v+s+t = i$ and $g$ is a monomial in $R$. It follows that
$$
\begin{cases}
    p \geqslant ku+ps,\\
    m(i-1)\geqslant ku+mv+qt,\\
    r\geqslant mv.
\end{cases}
$$

The first inequality implies that $s = 0$ or $s=1$. If $s = 0$, then $u+v+t = i$. Since $q\geqslant 2m> k$, from the second inequality of the above system,  we have
$$m(i-1) \geqslant ku+mv+qt \geqslant ku+kv+kt = k(u+v+t) = ki,$$
and so $i\geqslant m/(m-k)$. On the other hand, since $i=\lceil k/(m-k)\rceil$, we have
$i <k/(m-k)+1 = m/(m-k)$, a contradiction. So $s=1$.

Since $s=1$, we get $u = 0$, and  hence $v+t = i-1$. The second inequality of the above system yields $m(i-1-v)\geqslant qt$. Note that $i-1-v = t$, so $mt\geqslant qt$. Since $m < q$, we get $t = 0$. So $v = i-1$. Together with the last inequality of the  above system, this forces $r\geqslant m(i-1)$, and the proof is now  complete.
\end{proof}

Now we  give  a necessary condition for a weighted tree $G_\omega$ such that $I(G_\omega)^n$ is Cohen-Macaulay for all $n\geqslant 1$.

\begin{prop} \label{CMn} Let $G_\omega$ be a weighted tree.  If $I(G_\omega)^n$ is Cohen-Macaulay for all $n\geqslant 1$, then $G$ has a perfect matching $\{x_1y_1$, $\ldots$, $x_ty_t\}$, where each $y_i$ is a leaf vertex in $G$. Let $\omega_{ij}=\omega(x_ix_j)$ if $x_ix_j\in E(G)$ and $m_i = \omega(x_iy_i)$ for all $i=1,\ldots,t$. Then the following conditions hold:
\begin{enumerate}
    \item $2\omega_{ij}\leqslant \min\{m_i, m_j\}$ for any $x_ix_j\in E(G)$,
    \item For any two different $x_ix_j,x_jx_k\in E(G)$, we have  $\omega_{ij}\ne \omega_{jk}$. Furthermore, if $\omega_{ij} < \omega_{jk}$, then
\begin{enumerate}
\item $m_i \geqslant \omega_{ij}\lceil \omega_{jk}/(\omega_{jk}-\omega_{ij})\rceil$, and
\item $m_k \geqslant \omega_{jk}(\lceil \omega_{jk}/(\omega_{jk}-\omega_{ij})\rceil-2)$.
\end{enumerate}    
\end{enumerate}
\end{prop}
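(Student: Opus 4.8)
The plan is to first pin down the structural claim together with condition (1) from results already in hand, and then to derive condition (2) by pruning $G$ down to the three-edge path configuration of Figure \ref{pqrTree} and invoking Lemmas \ref{dif}, \ref{pqr} and \ref{rBound}. Taking $n=1$ shows that $I(G_\omega)$ is Cohen-Macaulay, so $I(G)=\sqrt{I(G_\omega)}$ is Cohen-Macaulay by \cite[Theorem 2.6]{HTT}; hence $G$ is a Cohen-Macaulay tree and \cite[Theorem 6.3.4]{Vi} supplies the perfect matching $\{x_1y_1,\ldots,x_ty_t\}$ with each $y_i$ a leaf and $G[x_1,\ldots,x_t]$ a tree. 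Since each $y_i$ is a leaf, its only neighbour is $x_i$, so every edge of $G$ is either a matching edge $x_iy_i$ or an edge among the $x$'s; in particular there is no edge $x_iy_j$ with $i\ne j$. Condition (1) is then immediate: $I(G_\omega)^2$ is Cohen-Macaulay, so condition (1) of Theorem \ref{squarePowers} (specialised to the tree, where the only possibility is $z_j=x_j$) gives $2\omega_{ij}\leqslant\min\{m_i,m_j\}$ for every $x_ix_j\in E(G)$.

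For condition (2), I would fix two distinct edges $x_ix_j, x_jx_k$ sharing the vertex $x_j$ and prune $G$ to the induced subgraph $H$ on $\{x_i,y_i,x_j,y_j,x_k,y_k\}$ (plus some isolated leaf vertices). The pruning proceeds by repeatedly applying Lemma \ref{L2}: whenever the current $x$-tree properly contains the path $x_i-x_j-x_k$, it has a leaf $x_p$ with $p\notin\{i,j,k\}$, and since its partner $y_p$ always has degree $1$, deleting $x_p$ preserves Cohen-Macaulayness of all powers. The vertex $x_j$ has degree at least $2$ in the $x$-tree (being adjacent to $x_i$ and $x_k$) and so is never a leaf; an elementary argument then shows that a removable leaf outside $\{x_i,x_j,x_k\}$ exists until the $x$-tree reduces to the path $x_i-x_j-x_k$. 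Because $G$ is a tree (no triangle $x_ix_jx_k$) and carries no edge $x_iy_j$ with $i\ne j$, the resulting $H$ acquires no extra edges and is precisely the weighted graph of Figure \ref{pqrTree} with $k=\omega_{ij}$, $m=\omega_{jk}$, $p=m_i$, $q=m_j$, $r=m_k$; the isolated vertices do not affect Cohen-Macaulayness, so $I(H_\omega)^n$ is Cohen-Macaulay for all $n\geqslant 1$.

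It then remains only to read off the inequalities. If $\omega_{ij}=\omega_{jk}$, that is $k=m$, then Lemma \ref{dif} forces $I(H_\omega)^n$ to fail to be Cohen-Macaulay for $n\geqslant\max\{p,r\}/m+2$, contradicting the hypothesis; hence $\omega_{ij}\ne\omega_{jk}$. Assuming $\omega_{ij}<\omega_{jk}$, i.e. $k<m$, Lemma \ref{pqr} gives $p\geqslant k\lceil m/(m-k)\rceil$, which is exactly $m_i\geqslant\omega_{ij}\lceil \omega_{jk}/(\omega_{jk}-\omega_{ij})\rceil$, establishing (2)(a); and Lemma \ref{rBound} gives $r\geqslant m(\lceil m/(m-k)\rceil-2)$, which is exactly $m_k\geqslant\omega_{jk}(\lceil \omega_{jk}/(\omega_{jk}-\omega_{ij})\rceil-2)$, establishing (2)(b).

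I expect the main obstacle to be the pruning step. One must verify carefully that each deletion falls under Lemma \ref{L2} (the partner leaf $y_p$ always has degree $1$), that the deletions can be ordered so that the central path $x_i-x_j-x_k$ never gets disconnected, that the final induced subgraph gains no spurious edges beyond those of Figure \ref{pqrTree}, and that the leftover isolated vertices are harmless for Cohen-Macaulayness. Once this reduction to the path configuration is justified, the three path lemmas deliver the conclusions directly, so the difficulty is entirely graph-theoretic bookkeeping rather than fresh commutative-algebra input.
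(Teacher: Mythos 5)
Your proposal is correct and follows essentially the same route as the paper's proof: the matching structure and condition (1) come from Cohen-Macaulayness of $I(G)$ and of $I(G_\omega)^2$ (the paper cites Lemma \ref{VC1}, you cite Theorem \ref{squarePowers}, which rests on the same lemma), and condition (2) comes from pruning to the configuration of Figure \ref{pqrTree} via Lemma \ref{L2} and then invoking Lemmas \ref{dif}, \ref{pqr} and \ref{rBound}. The only difference is that your pruning bookkeeping is unnecessary: Lemma \ref{L2} applies to any $x_p$ whose pendant partner $y_p$ has degree $1$ --- no leaf-of-the-$x$-tree or connectivity condition is needed --- so the paper simply deletes every $x_s$ with $s\notin\{i,j,k\}$ outright, the induced subgraph having no spurious edges because a tree contains no triangle $x_ix_jx_k$.
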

\begin{proof} $(1)$ Since $I(G_\omega)^n$ is Cohen-Macaulay,  $I(G)=\sqrt{I(G_\omega)^n}$  is also Cohen-Macaulay by \cite[Theorem 2.6]{HTT}.
By \cite[Proposition 3.3]{CRT}, we get that $G$ has a perfect matching $\{x_1y_1$, $\ldots$, $x_ty_t\}$, where each $y_i$ is a leaf vertex in $G$. On the other hand, since $I(G_\omega)^2$ is Cohen-Macaulay, by Lemma \ref{VC1}, $\omega(x_iy_i) \geqslant 2\omega(x_iz)$ for all $i=1,\ldots, t$ and $z\in N_G(x_i)\setminus \{y_i\}$. This proves $(1)$.

$(2)$ Let $H = G \setminus \{x_s\mid s\notin\{i,j,k\}\}$, then $E(H)=\{x_iy_i,x_jy_j,x_ky_k,  x_ix_j,x_jx_k\}$. Since $I(G_\omega)^n$ is Cohen-Macaulay for all $n\geqslant 1$,  $I(H_\omega)^n$ is also Cohen-Macaulay for all $n\geqslant 1$ by Lemma \ref{L2}. Thus $\omega(x_ix_j)\ne \omega(x_jx_k)$ by Lemma \ref{dif}.
Therefore, $(a)$ and $(b)$ follow from Lemmas \ref{pqr} and \ref{rBound}, respectively.
\end{proof}

To characterize the weighted trees $G_\omega$ such that $I(G_\omega)^n$ are Cohen-Macaulay for all $n\geqslant 1$, we propose the following conjecture.

\begin{conj}\label{conjTrees} The converse of Proposition \ref{CMn} is true.
\end{conj}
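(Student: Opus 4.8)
The plan is to follow the two-stage strategy of Section~\ref{sec:Higher}: assuming $G_\omega$ is a weighted tree with a perfect matching $\{x_1y_1,\dots,x_ty_t\}$ of pendant edges for which conditions $(1)$ and $(2)$ of Proposition~\ref{CMn} hold, Lemma~\ref{lemCM} reduces the Cohen--Macaulayness of $I(G_\omega)^n$ to proving, for every $n\geqslant 1$, that (i) $I(G_\omega)^n$ has no embedded prime (equivalently $I^n=I^{(n)}$), and (ii) $\sqrt{I(G_\omega)^n:f}$ is Cohen--Macaulay for every monomial $f\notin I(G_\omega)^n$. I would run the whole argument by induction on $t$, with Proposition~\ref{n2} furnishing the base case $t\leqslant 2$, and I would use the hypothesis that $G[x_1,\dots,x_t]$ is a tree to fix a leaf $x_p$ of this tree, adjacent to a unique neighbour $x_j$.

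For stage (i) I would adapt the argument of Lemma~\ref{bUM} (the same mechanism underlying the first step of Theorem~\ref{Tk}) from $n=2$ to arbitrary $n$. If some associated prime of $I^n$ contained both $x_k$ and $y_k$, one writes it as $I^n:f$ and expands $fx_k$ and $fy_k$ as products of generators; condition $(1)$, namely $2\omega_{ij}\leqslant\min\{m_i,m_j\}$, forces the pendant generator $(x_ky_k)^{m_k}$ to divide $f$ after cancellation, and since the tree is acyclic there is no cycle to obstruct the descent, so one recovers $f\in I^n$, a contradiction. Hence $\ass(R/I^n)=\ass(R/I(G))$ and $I^n$ is unmixed; in particular $\sqrt{I^n:f}$ is always an intersection of minimal primes of $I(G)$.

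For stage (ii) I would set $H=G\setminus\{x_p,y_p\}$ and note that conditions $(1)$ and $(2)$ restrict to the smaller weighted tree $H_\omega$ (removing $x_p$ only deletes constraints), so by the induction hypothesis $I(H_\omega)^n$ is Cohen--Macaulay for all $n$. Given $f\notin I^n$, I would read off which minimal primes survive in $\sqrt{I^n:f}$ from the exponents of $x_p,y_p,x_j$ in $f$, matching generators exactly as in Lemmas~\ref{L2} and \ref{bUM}, and then identify the surviving intersection of primes with the analogous localization on $H_\omega$. The role of conditions $(2a)$--$(2b)$, whose bounds are governed by the ceiling $\lceil\omega_{jk}/(\omega_{jk}-\omega_{ij})\rceil$, is precisely to forbid those monomials $f$ that produced the non-Cohen--Macaulay colons $(a,b,c)\cap(b,x,z)$ of Lemma~\ref{dif} and the failures quantified in Lemmas~\ref{pqr} and \ref{rBound}; once such $f$ are ruled out, every surviving intersection should reduce to a Cohen--Macaulay object coming from $H_\omega$.

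The hard part will be stage (ii). Unlike Lemma~\ref{L2}, which only transports Cohen--Macaulayness \emph{downward} to subgraphs, the induction here must build it \emph{upward}, so one cannot simply delete a pendant edge and quote the smaller case. Moreover the conditions $(2)$ are not a single uniform bound of the type $\omega(x_iy_i)\geqslant k\,\omega(x_ix_j)$ that drove the polarization-to-Artinian argument of Theorem~\ref{Pn}; the ceiling $\lceil\omega_{jk}/(\omega_{jk}-\omega_{ij})\rceil$ depends on the two adjacent branch weights in a way that must be made to control, \emph{simultaneously for every} $n$, the complete list of monomials for which no pendant generator dominates. Carrying out this bookkeeping uniformly in $n$ and across every branching of the tree is exactly where the obstruction lies, which is presumably why the statement is recorded as Conjecture~\ref{conjTrees} rather than proved.
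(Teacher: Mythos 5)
The statement you set out to prove is recorded in the paper as Conjecture~\ref{conjTrees}, and the paper itself offers no proof of it: the converse of Proposition~\ref{CMn} is established only in two special cases, namely $t=2$ (Proposition~\ref{n2}) and the case where $G[x_1,\ldots,x_t]$ is a star (Theorem~\ref{thmStar}, resting on Lemma~\ref{unmixed-star-center}). Your proposal is likewise not a proof: it is a two-stage plan whose decisive stage you explicitly leave open, conceding in your final paragraph that the bookkeeping ``is exactly where the obstruction lies.'' A plan that ends by acknowledging the obstruction restates the difficulty of the conjecture rather than resolving it, so there is a genuine gap --- essentially all of stage (ii), and much of stage (i) for a general tree, is missing.

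Two concrete points about why the outline would not go through as described. First, in stage (i) your appeal to acyclicity (``since the tree is acyclic there is no cycle to obstruct the descent'') is not what drives the unmixedness argument even in the star case: Lemma~\ref{unmixed-star-center} depends essentially on the fact that every non-center vertex $x_k$ satisfies $N_G(x_k)=\{y_k,x_t\}$ (see its Claims 3, 5 and 6, and Subcases 1.1 and 1.2, where the ceilings from conditions (2a)--(2b) enter the exponent counts), and this single-non-pendant-neighbour structure fails for a general tree, where a vertex may have several non-pendant neighbours whose weights condition (2) constrains only pairwise. Second, in stage (ii) your plan of deleting a pendant pair $\{x_p,y_p\}$ and lifting Cohen--Macaulayness from $H_\omega$ back to $G_\omega$ runs against the direction of Lemma~\ref{L2}, as you yourself note; but the mechanism the paper actually uses in the star case is entirely different from the generator-matching you then sketch --- it perturbs the weight function (doubling the pendant weights one vertex at a time), localizes, and performs a descending induction via the regular elements $y_k-x_k$, with an Artinian-type primary ideal as the base case $k=t$. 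It is this deformation-and-regular-element mechanism, not a deletion induction or an identification of colon ideals with localizations on $H_\omega$, that a proof of the conjecture would need to extend beyond the star. As it stands, your text correctly identifies the landmarks (Lemmas~\ref{dif}, \ref{pqr}, \ref{rBound}) but supplies no argument at the point where the conjecture needs one.
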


Note that the conjecture is true for $t=2$ by Proposition  \ref{n2}. We will prove the conjecture for the case where $G[x_1,\ldots,x_t]$ is a star. Recall that a graph $T$ is called a star if it has a vertex $v$ such that $E(T) = \{vu \mid u\in V(T)\setminus \{v\}\}$.  In this case, the vertex $v$ is called the center of $T$. We begin by proving the unmixedness of  the edge ideals of such weighted graphs.

\begin{lem}\label{unmixed-star-center} Suppose $G[x_1,\ldots,x_t]$ is a star with the center $x_t$. Let $m_i =\omega(x_iy_i)$ for $i=1,\ldots,t$ and let $d_i = \omega(x_ix_t)$ for $i\ne t$. Suppose that 
\begin{enumerate}
    \item $m_t \geqslant 2\max\{d_i \mid i\ne t\}$.
    \item $d_i \ne d_k$ for any two different $x_ix_t,x_kx_t\in E(G)$. Furthermore, if $d_i < d_k$, then
\begin{enumerate}
\item $m_i \geqslant d_i\lceil d_k/(d_k-d_i)\rceil$, and
\item $m_k \geqslant d_k\max\{2,\lceil d_k/(d_k-d_i)\rceil-2\}$.
\end{enumerate}
\end{enumerate}
Then, $I(G_\omega)^n$ is unmixed for all $n\geqslant 1$.
\end{lem}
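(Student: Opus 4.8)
The plan is to reduce unmixedness to a symbolic-power identity and then to verify that identity by a direct monomial-membership argument, in the spirit of Proposition \ref{n2}. First I would record the combinatorics of minimal vertex covers. Since $G$ carries the perfect matching $x_1y_1,\ldots,x_ty_t$ of pendant edges and $G[x_1,\ldots,x_t]$ is a star with center $x_t$, one checks that every minimal vertex cover has exactly $t$ elements and meets each pair $\{x_i,y_i\}$ in exactly one vertex: explicitly, a minimal cover is either $\{y_t,x_1,\ldots,x_{t-1}\}$ or of the form $\{x_t\}\cup\{z_i\mid i\ne t\}$ with $z_i\in\{x_i,y_i\}$. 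Hence $G$ is very well covered and every $\mathfrak p\in\Min(I)$ has height $t$, where $I=I(G_\omega)$. Because the minimal primes all share the height $t$, $I^n$ is unmixed if and only if $I^n=I^{(n)}$; as the inclusion $I^n\subseteq I^{(n)}$ is automatic, it suffices to prove $I^{(n)}\subseteq I^n$ for every $n\ge 1$.

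Next I would make this containment concrete by computing, for each minimal cover $C$, the primary component $Q_C=IR_{\mathfrak p_C}\cap R$. The hypotheses force each pendant weight to dominate the star weights incident to the same vertex, so that in every localization the pure pendant power is redundant against the incident star power; consequently each $Q_C$ is a monomial complete intersection (for instance $Q_{\{y_t,x_1,\ldots,x_{t-1}\}}=(x_1^{d_1},\ldots,x_{t-1}^{d_{t-1}},y_t^{m_t})$, and for the second type of cover the component is generated by the powers $z_i^{m_i}$ together with $(x_ix_t)^{d_i}$ or $x_t^{d_i}$ according as $z_i=x_i$ or $z_i=y_i$). These explicit shapes convert the statement $f\in I^{(n)}$, for a monomial $f$, into a finite list of numerical inequalities among the exponents $\deg_{x_t}(f)$, $\deg_{x_i}(f)$ and $\deg_{y_i}(f)$.

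The decisive step is then to exhibit, from such an $f$, a factorization $f=g\prod_{j=1}^{n}G_j$ with $g$ a monomial and each $G_j$ a minimal generator of $I$. I would choose how many copies of each star generator $(x_ix_t)^{d_i}$ and of each pendant generator to use, selecting the factorization that minimizes the number of star factors, exactly as in the minimality argument of Proposition \ref{n2}. Here condition (1), $m_t\ge 2\max_i d_i$, lets the central pendant power absorb any surplus exponent of $x_t$, while the pairwise conditions $m_i\ge d_i\lceil d_k/(d_k-d_i)\rceil$ and $m_k\ge d_k(\lceil d_k/(d_k-d_i)\rceil-2)$ are precisely what guarantee that whenever $\deg_{x_t}(f)$ is large enough to be shared between two spokes of distinct weights $d_i<d_k$, the forced powers of $x_i$ and $x_k$ are already present in $f$. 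This is where I expect the real difficulty to lie: because arbitrarily many spokes of pairwise distinct weights all meet $x_t$, distributing $\deg_{x_t}(f)$ among the generators $(x_ix_t)^{d_i}$ is a genuine combinatorial optimization, and the ceiling bounds, the same ones forced in Lemmas \ref{pqr} and \ref{rBound}, must be invoked spoke by spoke to eliminate the leftover configurations.

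Finally, I would keep in reserve the contradiction format of Lemma \ref{bUM} as an alternative packaging of the same computation: were $I^n$ not unmixed there would be an embedded prime $P\supseteq(x_k,y_k)$ and a monomial $f$ with $I^n\colon f=P$, so that $fx_k,fy_k\in I^n$ while $f\notin I^n$; expanding $fx_k$ and $fy_k$ as products of $n$ generators and comparing the factors meeting $x_k$ and $y_k$ would force, through the same weight inequalities, that $f$ already lies in $I^n$. Either way the essential inputs are the star geometry, which funnels all interaction through the center $x_t$, together with the ceiling conditions on the weights.
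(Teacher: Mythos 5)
Your opening reductions are fine: the list of minimal vertex covers is correct, all of them have size $t$, and unmixedness of $I^n$ is indeed equivalent to $I^{(n)}\subseteq I^n$. But the proposal then breaks in two places. First, the concrete claim that every primary component $Q_C$ is a monomial complete intersection is false. For the cover $C=\{x_t,x_1,\ldots,x_{t-1}\}$ (all $z_i=x_i$), contracting $IR_{\mathfrak p_C}$ gives
\begin{equation*}
Q_C=\bigl(x_t^{m_t}\bigr)+\bigl(x_i^{m_i},\,(x_ix_t)^{d_i}\mid i\neq t\bigr),
\end{equation*}
and since $m_i\geqslant 2d_i$ and $m_t\geqslant 2d_i$, none of these $2t-1$ generators is redundant, while $\hgt(\mathfrak p_C)=t$; so $Q_C$ is not a complete intersection for any $t\geqslant 2$. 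The consequence you actually need, namely $I^{(n)}=\bigcap_C Q_C^n$, can be rescued by a different observation (each $Q_C$ involves only the variables of $\mathfrak p_C$ and contains a pure power of each of them, hence every power $Q_C^n$ is again $\mathfrak p_C$-primary), but even then membership in $Q_C^n$ is not ``a finite list of numerical inequalities'': for these non-CI components it is a disjunction over all ways of dividing $f$ by a product of $n$ generators, which is precisely the combinatorial problem you have not solved.

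Second, and decisively, your ``decisive step'' is asserted rather than proved. Producing, for every monomial $f\in\bigcap_C Q_C^n$, a factorization $f=g\,G_1\cdots G_n$ into $n$ minimal generators of $I$ is the entire content of the lemma, and saying that the ceiling bounds ``are precisely what guarantee'' this, or ``must be invoked spoke by spoke,'' restates the conclusion. The paper's proof shows what this step actually costs: it argues by contradiction from an embedded prime (your reserve plan), writes $fx_i=gf_1\cdots f_n$ and $fy_i=hg_1\cdots g_n$, establishes six preparatory claims about these two factorizations (disjointness of the generator lists, which generators are forced to appear, etc.), and runs a genuine double induction --- on $n$, where unmixedness of $I^{n-1}$ is used to cancel a common factor of the two expressions, and on $t$, used to delete a spoke $x_kx_t$ whose generators occur in neither expression --- before a case analysis ($i\neq t$ versus $i=t$, each with subcases) finally extracts a contradiction from conditions $(2)(a)$ and $(2)(b)$. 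Your plan contains no inductive setup, no analogue of those claims, and no mechanism for deciding how $\deg_{x_t}(f)$ is distributed among spokes of distinct weights; the reserve sketch is the paper's method but is equally unexecuted. As it stands, the proposal is a correct framing of the problem, not a proof of it.
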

\begin{proof} Let $I = I(G_\omega)$, then
$$I =((x_ix_t)^{d_i}, (x_ky_k)^{m_k}\mid i\in [t] \setminus\{t\} \text{ and } k\in [t]).$$

We now prove by induction on $t$ that $I^n$ is unmixed. If $t = 2$, the lemma follows from Proposition \ref{n2}. Suppose $t\geqslant 3$.
We continue to prove the statement by induction on $n$. The case $n\leqslant 2$ follows from  Theorem \ref{Pn}. Suppose that $n\ge 3$ and $I^{n-1}$ is unmixed. Suppose by contradiction that $I^n$ is not unmixed, so  there exists an associated prime ideal $\p$ of $I^n$ containing at least $t+1$ variables. In particular, it contains  $x_i,y_i$ for some $i\in [t]$. Let $f$ be a monomial such that $\p = I^n \colon f$, then  $f\notin I^n$.
 Since $fx_i,fy_i \in I^n$, we can write $f x_i = gf_1\cdots f_n \text{ and } f y_i= h g_1\cdots g_n$, where $g,h$ are monomials and $f_i,g_i$ are monomial generators of $I$ for $i=1,\ldots,n$. We first prove the  following six claims:

\medskip

{\it Claim $1$}: $\{f_1,\ldots,f_n\} \cap \{g_1,\ldots,g_n\}=\emptyset$. Indeed, assuming by contradiction that this is not true, we can assume that $f_1=g_1$. First we show that $f_1\mid f$. It is obvious that $f_1\mid f$ if either $x_i\nmid f_1$ or $y_i\nmid f_1$, so we can assume  that $x_i\mid f_1$ and  $y_i\mid f_1$, it follows that $f_1= (x_iy_i)^{m_i}$. From $f_1 \mid fx_i$ we get $y_i^{m_i}\mid f$. Similarly, $x_i^{m_i} \mid f$, and hence $f_1\mid f$, as desired. Now, let $f' = f/f_1$. Then  $f'x_i = gf_2\cdots f_n \in I^{n-1}$ and $f'y_i = hg_2\cdots g_n \in I^{n-1}$.  
We can prove that $f^{\prime}\in I^{n-1}$. In fact, suppose that $I=Q_1\cap Q_2\cap\cdots\cap Q_s$ is an irredundant primary decomposition of $I$. 
By the induction hypothesis, $I^{n-1}$ is unmixed, so  $I^{n-1}=I^{(n-1)}$, where $I^{(n-1)}$ is the $(n-1)$-th symbolic power of $I$. In other words,  $I^{n-1}=Q_1^{n-1}\cap Q_2^{n-1}\cap\cdots\cap Q_s^{n-1}$. Conversely, if $f^{\prime}\notin I^{n-1}$, there exists some $ j\in [s]$ such that $f^{\prime}\notin Q_j^{n-1}$, i.e.  $\sqrt{Q_j^{n-1}:f^{\prime}}=\sqrt{Q_j}$. Thus forces
	\begin{eqnarray*}
		(I^{n-1}:f^{\prime})&=&(Q_1^{n-1}\cap Q_2^{n-1}\cap\cdots\cap Q_s^{n-1}):f^{\prime}\\
		&=&(Q_1^{n-1}:f^{\prime})\cap (Q_2^{n-1}:f^{\prime})\cap\cdots\cap(Q_s^{n-1}:f^{\prime})\\
		&\subseteq&(Q_j^{n-1}:f^{\prime})\subseteq\sqrt{Q_j^{n-1}:f^{\prime}}=\sqrt{Q_j}.\\
	\end{eqnarray*}
   Note that $\sqrt{Q_j}$ is an associated prime ideal of $I(G)$,  by Lemma \ref{cmCover}, we get $(x_i,y_i)\nsubseteq \sqrt{Q_j}$, which implies $(x_i,y_i)\nsubseteq (I^{n-1}:f^{\prime})$, i.e., $f^{\prime}x_i\notin I^{n-1}$ or $f^{\prime}y_i\notin I^{n-1}$, a contradiction. So $f^{\prime}\in I^{n-1}$.
  
\medskip

{\it Claim $2$}: $x_i \nmid g$ and $y_i\nmid h$. Indeed, if $x_i\mid g$, then $f=(g/x_i)f_1\cdots f_n\in I^n$. Similarly, if $y_i\mid h$, then $f\in I^n$. These two cases are impossible, since $f\notin I^n$, and the claim follows.

\medskip

{\it Claim $3$}: There is no index $k\ne t$ such that either $(x_kx_t)^{d_k}, (x_ky_k)^{m_k}\in \{f_1,\ldots,f_n\}$, or $(x_kx_t)^{d_k}, (x_ky_k)^{m_k}\in \{g_1,\ldots,g_n\}$. In fact, if this assertion is not true, we can assume that  $(x_kx_t)^{d_k}, (x_ky_k)^{m_k}\in \{f_1,\ldots,f_n\}$  for some $k\ne t$. By Claim $1$, we get $(x_kx_t)^{d_k},(x_ky_k)^{m_k}\notin \{g_1,\ldots,g_n\}$. It follows that $\gcd((x_ky_k)^{m_k},g_q)=1$ for $q=1,\ldots,n$. Hence  $(x_ky_k)^{m_k}\mid h$, since $ f y_i= h g_1\cdots g_n$. Let  $h= \lambda(x_ky_k)^{m_k}$ for some monomial $\lambda$, then $fy_i =\lambda (x_ky_k)^{m_k}g_1\cdots g_n $. This implies that $f\in I^n$, a contradiction, and the claim follows.

\medskip

{\it Claim $4$}: There exists some $p\in [n]$ such that  $g_p = (x_iy_i)^{m_i}$. Indeed, since $fy_i = hg_1\cdots g_n$ and $y_i\nmid h$ by  Claim $2$, we have $y_i\mid g_p$ for some $p\in [n]$, and hence $g_p = (x_iy_i)^{m_i}$.

\medskip

{\it Claim $5$}:  If $i\ne t$, then there exists some $p\in [n]$ such that $f_p = (x_ix_t)^{d_i}$.  Indeed, since $fx_i = gf_1\cdots f_n$ and $x_i\nmid g$ by  Claim $2$, we have $x_i\mid f_p$ for some $p\in [n]$. Note that $x_i\mid f_p$ if and only if  $f_p \in \{(x_iy_i)^{m_i}, (x_ix_t)^{d_i}\}$. By Claims $1$ and $4$, we get $f_p = (x_ix_t)^{d_i}$.

\medskip

{\it Claim $6$}: $\{(x_kx_t)^{d_k}\mid k\in [t] \setminus \{t\}\} \subseteq \{f_1,\ldots,f_n,g_1,\ldots,g_n\}$. 

In fact, 
if there exists some $k\in [t] \setminus \{t\}$ such that $(x_kx_t)^{d_k} \notin \{f_1,\ldots,f_n,g_1,\ldots,g_n\}$,
then  we can get  $k\ne i$. Indeed, the case $i = t$  is trivial. If $i\ne t$, then   $(x_ix_t)^{d_i}\in\{f_1,\ldots,f_n\}$ by Claim $5$, so  $k\ne i$.
Now we prove that  for such $k$, $ (x_ky_k)^{m_k}\notin \{f_1,\ldots,f_n,g_1,\ldots,g_n\}$. Conversely,  $(x_ky_k)^{m_k}\in \{f_1,\ldots,f_n\}$ or $(x_ky_k)^{m_k}\in\{g_1,\ldots,g_n\}$. We distinguish between the following two cases:

(i)  If  $(x_ky_k)^{m_k}\in \{f_1,\ldots,f_n\}$, then $(x_ky_k)^{m_k}\mid fx_i$ by the  expression of $fx_i$.  It follows that $(x_ky_k)^{m_k}\mid f$, since  $k\ne i$.  In particular, $(x_ky_k)^{m_k}\notin \{g_1,\ldots,g_n\}$ by Claim $1$.  Since $k\ne t$,   by Claim $5$, there exists some $p\in [n]$ such that  $f_p = (x_kx_t)^{d_k}$. Again, by Claim $1$, we get $(x_kx_t)^{d_k}\notin \{g_1,\ldots,g_n\}$. So   $x_k\nmid g_j$ and $y_k \nmid g_j$ for all  $j\in [n]$, since $N_G(x_k)=\{y_k,x_t\}$.
Since $(x_ky_k)^{m_k}\mid f$, we get $(x_ky_k)^{m_k}\mid fy_i=hg_1\cdots g_n$. This forces $(x_ky_k)^{m_k}\mid h$.
Let $h=\lambda(x_ky_k)^{m_k}$ for some monomial $\lambda$, then $fy_i=hg_1\cdots g_n=\lambda(x_ky_k)^{m_k}g_1\cdots g_n\in I^{n+1}$. This implies that  $f\in I^n$, a contradiction.

(ii)  If  $(x_ky_k)^{m_k}\in \{g_1,\ldots,g_n\}$, then, by similar arguments as in case (i) we also get  $f\in I^n$, a contradiction.

Therefore,  $(x_kx_t)^{d_k}, (x_ky_k)^{m_k}\notin \{f_1,\ldots,f_n,g_1,\ldots,g_n\}$.
In this case, let $H_\omega = G_\omega\setminus \{x_k,y_k\}$, then  $fx_i,fy_i\in I(H_\omega)^n$. Note that  $H_\omega$ is an induced subgraph of $G_\omega$ on the set $V(G_\omega)\setminus \{x_k,y_k\}$ and $G[x_1,\ldots,\hat{x}_k,\ldots, x_t]$ is a star with center $x_t$, where $\hat{x}_k$ denotes $x_k$ removed from $\{x_1,\ldots,x_t\}$. By the induction hypothesis, $I(H_\omega)^n$ is unmixed, so $f\in I(H_\omega)^n$ by the same argument as in Claim 1.  In particular,  $f\in I^n$, a contradiction. Hence  $\{(x_kx_t)^{d_k}\mid k\in [t] \setminus \{t\}\} \subseteq \{f_1,\ldots,f_n,g_1,\ldots,g_n\}$.

\medskip
Next, we will prove that $f\in I^n$ in the expression $\p = I^n \colon f$  of the associated prime ideal $\p$, which contradicts $f\notin I^n$. This implies that $I^n$ is unmixed. We distinguish into the following  two cases:

\medskip

{\it Case $1$}: If $i \ne t$, then $(x_ty_t)^{m_t} \notin \{g_1,\ldots,g_n\}$. Indeed, if this is not the case, then  we can assume that $g_{n-1}= (x_ty_t)^{m_t}$ and $g_n = (x_iy_i)^{m_i}$ by Claim $4$. Since $\min\{m_i,m_t\} \geqslant 2d_i$, we get $(x_ix_t)^{2d_i} \mid (x_iy_i)^{m_i} (x_ty_t)^{m_t}$, and hence $(x_iy_i)^{m_i} (x_ty_t)^{m_t}=y_i\lambda (x_ix_t)^{2d_i}$ for some monomial $\lambda$. It follows that
$$fy_i = y_i \lambda h g_1\cdots g_{n-2}  (x_ix_t)^{2d_i}, \text{ and so } f\in I^n,$$
a contradiction, and hence $(x_ty_t)^{m_t} \notin \{g_1,\ldots,g_n\}$.

Let $V_1 = \{k\mid  (x_kx_t)^{d_k} \in \{f_1,\ldots,f_n\}\}\setminus\{i\}$ and $V_2 = [t]\setminus (V_1\cup \{i,t\})$. Then, by Claims $1$, $3$, $4$ and $6$, we have
\begin{equation}\label{FX}
fx_i =  g (x_ix_t)^{t_id_i}\prod_{k\in V_1} (x_kx_t)^{t_kd_k} \prod_{p\in V_2\cup\{t\}} (x_py_p)^{s_p m_p},
\end{equation}
and
\begin{equation}\label{FY}
fy_i =  h(x_iy_i)^{\alpha_im_i}\prod_{a\in V_1} (x_ay_a)^{\alpha_a m_a}\prod_{b\in V_2} (x_bx_t)^{\beta_bd_b},
\end{equation}
where $t_k\geqslant 1$ for every $k\in V_1\cup \{i\}$,  $s_p\geqslant 0$ for every $p\in V_2\cup \{t\}$, $\alpha_i\ge 1$, $\alpha_a\ge 0$ for every $a\in V_1$ and $\beta_b\geqslant 0$ for every $b\in V_2$,
and
$$t_i+\sum_{k\in V_1} t_k + \sum_{p\in V_2\cup\{t\}} s_p = \alpha_i+\sum_{a\in V_1} \alpha_a+\sum_{b\in V_2}\beta_b = n.$$

Since $x_i \nmid g$, from Eqs. $(\ref{FX})$ and $(\ref{FY})$ we get $$t_id_i-1 =\deg_{x_i}(fx_i)-1 =  \deg_{x_i}(f) = \deg_{x_i}(fy_i)\geqslant \alpha_im_i,$$
and hence
\begin{equation}\label{STI}
t_id_i\geqslant \alpha_im_i+1.
\end{equation}

Next, we will show that $V_2\ne \emptyset$. Indeed, if $V_2=\emptyset$, then  $x_t^{t_id_i} \mid f$ by Eq, $(\ref{FX})$, so that $x_t^{t_id_i}\mid fy_i$. It follows that $x_t^{t_id_i}\mid h$ by Eq. $(\ref{FY})$. In particular, $x_t^{\alpha_im_i}\mid h$ by Eq. $(\ref{STI})$. Hence $h =\lambda x_t^{\alpha_im_i}$ for some monomial $\lambda$. Since $m_i> d_i$, from Eq. $(\ref{FY})$ we have
$$fy_i =  h(x_iy_i)^{\alpha_im_i}\prod_{a\in V_1} (x_ay_a)^{\alpha_a m_a},
$$
and hence
$$f = (\lambda x_i^{\alpha_im_i-\alpha_id_i}y_i^{\alpha_im_i-1})x_t^{\alpha_im_i-\alpha_id_i} (x_ix_t)^{\alpha_id_i}\prod_{a\in V_1} (x_ay_a)^{\alpha_a m_a} \in I^n,
$$
a contradiction. It follows that $V_2\ne \emptyset$.  Therefore,  there exists some $q\in V_2$. We   consider the following two subcases:

{\it Subcase $1.1$}: If $d_q < d_i$, then $s_q\geqslant 1$. Indeed, if $s_q = 0$, then  $x_q^{d_q}\mid fy_i$ by Eq. (\ref{FY}). This implies that $x_q^{d_q}\mid f$. It follows that $x_q^{d_q}\mid g$ by Eq. (\ref{FX}). Let $g = \lambda x_q^{d_q}$ for some monomial $\lambda$, then it follows from Eq. $(\ref{FX})$ that
$$f =  \lambda x_i^{d_i-1}x_t^{d_i-d_q}  (x_ix_t)^{(t_i-1)d_i} (x_qx_t)^{d_q}\prod_{x_k\in V_1} (x_kx_t)^{t_kd_k} \prod_{x_p\in V_2\cup\{t\}} (x_py_p)^{s_p m_p}\in I^n,$$
a contradiction. Hence  $s_q\geqslant 1$.

Now let $m = \lceil d_i/(d_i - d_q)\rceil$, then $m_q\geqslant md_q$ by the assumption that condition $(2)$$(a)$.  By the definition of $m$, $m \geqslant d_i/(d_i - d_q)$, i.e., $(m-1)d_i \geqslant md_q$. Hence $(x_qx_t)^{md_q} \mid x_q^{m_q}x_t^{(m-1)d_i}$, it follows that 
\begin{equation}\label{Sq}(x_qy_q)^{m_q}(x_ix_t)^{(m-1)d_i} = \lambda (x_qx_t)^{md_q}
\end{equation} for some monomial $\lambda$.

Claim:  $t_i\geqslant m-1$, where  $t_i$ is from Eq. $(\ref{FX})$. Indeed, 
$t_i \geqslant \alpha_i(m_i/d_i)+1/d_i$ by Eq. $(\ref{STI})$. This gives $t_i \geqslant \alpha_i(\lceil d_i/(d_i-d_q) \rceil-2) +1/d_i$ by assuming that condition $(2)$$(b)$. It follows that
$t_i \geqslant \alpha_i(\lceil d_i/(d_i-d_q) \rceil-2)+1=\alpha_i(m-2)+1$, since $t_i$ is an integer. In particular, $t_i \geqslant (m-2)+1=m-1$. 

From Eqs. (\ref{FX}) and (\ref{Sq}),  we get
\begin{align*}
fx_i &=  g (x_ix_t)^{t_id_i}(x_qy_q)^{s_q m_q}\prod_{k\in V_1} (x_kx_t)^{t_kd_k} \prod_{p\in (V_2 \cup\{t\})\setminus \{q\}} (x_py_p)^{s_p m_p},\\
&=g\lambda (x_qx_t)^{md_q} (x_ix_t)^{(t_i-m+1)d_i}(x_qy_q)^{(s_q-1) m_q}\prod_{k\in V_1} (x_kx_t)^{t_kd_k} \prod_{p\in (V_2 \cup\{t\})\setminus \{q\}} (x_py_p)^{s_p m_p},
\end{align*}
which contradicts Claim $1$, since $(x_qx_t)^{d_q}\in \{g_1,\ldots,g_n\}$ by Eq. (\ref{FY}). This implies that $f\in I^n$.

\medskip
{\it Subcase $1.2$}: If $d_i < d_p$ for all $p\in V_2$. Let  $d_p = \min\{d_k\mid k\in V_2\}$,  $m = \lceil d_p/(d_p - d_i)\rceil$  and $\beta= \sum_{b\in V_2} \beta_b$, where each $\beta_b$ is from Eq. $(\ref{FY})$. We distinguish into the following two cases:

(i) If $\beta\geqslant m-1$, then  by Claim $4$, we can assume that
\begin{equation}\label{NeyFY}
fy_i= hg_1\cdots g_{n-1}(x_iy_i)^{m_i},
\end{equation}
where $g_{n-m+1},\ldots,g_{n-1}\in \{(x_kx_t)^{d_k}\mid k\in V_2\}$.

Note that $\deg_{x_t}(g_{n-m+1}\cdots g_{n-1}) \geqslant (m-1)d_p$, since $\deg_{x_t}(g_k) \geqslant d_p$ for every $k\in V_2$. We can write
$g_{n-m+1}\cdots g_{n-1}=\lambda x_t^{(m-1)d_p}$ for some monomial $\lambda$.
By assuming that condition $(2)$$(a)$, we can verify that $m_i\geqslant md_i$ and $(m-1)d_p\geqslant md_i$ by the definition of $m$. It follows that
$$fy_i=(\lambda x_t^{(m-1)d_p-md_i}  x_i^{m_i-md_i}y_i^{m_i}) g_1\cdots g_{n-m} (x_ix_t)^{m d_i},$$
and hence
$$f=(\lambda x_t^{(m-1)d_p-md_i}  x_i^{m_i-md_i}y_i^{m_i-1}) g_1\cdots g_{n-m} (x_ix_t)^{m d_i} \in I^n.$$

(ii) If $\beta< m-1$, then  $x_t^{t_id_i} \mid h\prod_{b\in V_2} (x_bx_t)^{\beta_bd_b}$ by Eqs. $(7)$ and $(8)$. Hence
$h\prod_{b\in V_2} (x_bx_t)^{\beta_bd_b} = \lambda x_t^{t_id_i}$, where $\lambda$ is a monomial. 
We also write Eq. $(\ref{FY})$ in the form
$$fy_i = \lambda g_1\cdots g_{n-\beta-1} x_t^{t_id_i}(x_iy_i)^{m_i}.$$
where $g_1,\ldots,g_{n-\beta-1}$ are monomial generators of $I$. 

Note that $t_i \geqslant \alpha_i(m_i/d_i)+1/d_i \geqslant \alpha_i m + 1/d_i$ by Eq. $(\ref{STI})$ and the condition  $(2)$$(a)$. It follows that  $t_i \geqslant \alpha_i m + 1$, since $t_i$ is an integer. In particular, $t_i> m$. It follows that
$$fy_i = (\lambda x_t^{d_i(t_i-m)}y_i^{m_i})x_i^{m_i-md_i} g_1  \cdots g_{n-\beta-1} (x_ix_t)^{md_i},$$
and so
$$f = (\lambda x_t^{d_i(t_i-m)}y_i^{m_i-1})x_i^{m_i-md_i} g_1  \cdots g_{n-\beta-1} (x_ix_t)^{md_i} \in I^{n-\beta-1+m} \subseteq I^n,$$
as $\beta< m-1$.

\medskip
{\it Case $2$}: If $i = t$, then  $(x_ty_t)^{m_t}\in \{g_1,\ldots,g_n\}$ by Claim $4$. Using the same argument as at the beginning of the proof of Case $1$, we get $(x_ky_k)^{m_k} \notin \{g_1,\ldots,g_n\}$ for any $k\ne t$.
Note that $fx_t = gf_1\ldots f_n$ and $x_t\nmid g$, we have $x_t\mid f_p$ for some $p\in [n]$. Hence $f_p = (x_ty_t)^{m_t}$ or $f_p = (x_vx_t)^{d_v}$ for some $v\ne t$. The first case cannot occur by Claim $1$. So  $(x_vx_t)^{d_v}\in \{f_1,\ldots,f_n\}$.

Let $V_1 = \{k\mid (x_kx_t)^{d_k} \in \{g_1,\ldots,g_n\}\}$ and $V_2 = [t]\setminus (V_1\cup \{t\})$. Then, by Claim $1$, $3$, $4$ and $6$, we have
$$fx_t =  g\prod_{p\in V_1} (x_py_p)^{s_p m_p}\prod_{k\in V_2} (x_kx_t)^{t_kd_k},$$
and
$$fy_t =  h(x_ty_t)^{\alpha_tm_t}  \prod_{a\in V_1} (x_ax_t)^{\alpha_ad_a},$$
where  $s_p\geqslant 0$ for every $p\in V_1$, $t_k\geqslant 1$ for every $k\in V_2$,   $\alpha_t\ge 1$, $\alpha_a\ge 0$ for every $a\in V_1$,
 and
$$\sum_{p\in V_1} s_p +\sum_{k\in V_2} t_k = \alpha_t+\sum_{a\in V_1} \alpha_a= n.$$
Note that since $x_v^{d_v}\mid fx_t$ and $v\ne t$, we have $x_v^{d_v}\mid f$. This implies  that $x_v^{d_v}\mid fy_t$, so $x_v^{d_v} \mid h$, since $v \in V_2$. Let $h = \lambda x_v^{d_v}$, where  $\lambda$ is a  monomial. Since $m_t> d_v$ by the condition $(1)$ of the lemma,  it follows that
$$f = (\lambda x_t^{m_t-d_v}y_t^{m_t-1}) (x_vx_t)^{d_v}  (x_ty_t)^{(\alpha_t-1)m_t} \prod_{a\in V_1} (x_ax_t)^{\alpha_ad_a}\in I^n.$$

\medskip

In summary, we always get that  $f\in I^n$ in the expression $\p = I^n \colon f$  of the associated prime ideal $\p$, a contradiction.
The proof is now complete.
\end{proof}

We are now ready to prove Conjecture \ref{conjTrees} in the case where $G[x_1,\ldots,x_t]$ is a star.

\begin{thm} \label{thmStar}  Suppose $G[x_1,\ldots,x_t]$ is a star with the center $x_t$. Let $m_i =\omega(x_iy_i)$ for $i=1,\ldots,t$ and let $d_i = \omega(x_ix_t)$ for $i\ne t$.  Then $I(G_\omega)^n$ is Cohen-Macaulay for all $n\geqslant 1$ if and only if 
\begin{enumerate}
    \item  $m_t \geqslant 2\max\{d_i \mid i\ne t\}$.
    \item $d_i \ne d_k$ for any two different $x_ix_t,x_tx_k\in E(G)$. Furthermore, if $d_i < d_k$, then
\begin{enumerate}
\item $m_i \geqslant d_i\lceil d_k/(d_k-d_i)\rceil$, and
\item $m_k \geqslant d_k\max\{2,\lceil d_k/(d_k-d_i)\rceil-2\}$.
\end{enumerate}
\end{enumerate}
\end{thm}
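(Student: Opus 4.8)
\emph{The forward direction.} Suppose first that $I(G_\omega)^n$ is Cohen-Macaulay for all $n\geqslant 1$. Then Proposition \ref{CMn} applies to the perfect matching $\{x_iy_i\}$ and the interior edges $x_ix_t$ (so $\omega_{it}=d_i$). Its part $(1)$ gives $2d_i\leqslant\min\{m_i,m_t\}$ for each $i<t$; taking the minimum over $i$ on the right-hand factor $m_t$ yields $m_t\geqslant 2\max_i d_i$, which is $(1)$, while the bound $m_i\geqslant 2d_i$ is recorded for later. Applying part $(2)$ to two distinct interior edges $x_ix_t,x_tx_k$ sharing the center (i.e. with $j=t$) gives $d_i\neq d_k$, and when $d_i<d_k$ it gives $m_i\geqslant d_i\lceil d_k/(d_k-d_i)\rceil$ and $m_k\geqslant d_k(\lceil d_k/(d_k-d_i)\rceil-2)$. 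Combining the last inequality with $m_k\geqslant 2d_k$ produces $m_k\geqslant d_k\max\{2,\lceil d_k/(d_k-d_i)\rceil-2\}$, which is exactly $(2)(b)$. Hence $(1)$ and $(2)$ hold.

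\emph{The converse: setting up Lemma \ref{lemCM}.} Now assume $(1)$ and $(2)$. These are precisely the hypotheses of Lemma \ref{unmixed-star-center}, so $I(G_\omega)^n$ is unmixed for every $n$; in particular it has no embedded prime and $I(G_\omega)^n=I(G_\omega)^{(n)}$, the associated primes of $I(G_\omega)^n$ being the minimal primes $P_j=(C_j)$ attached to the minimal vertex covers $C_j$ of the star-with-pendants tree $G$. By Lemma \ref{lemCM} it then suffices to prove that $\sqrt{I(G_\omega)^n\colon f}$ is Cohen-Macaulay for every monomial $f\notin I(G_\omega)^n$. Writing $x_t$ for the center, the minimal primes are $P_0=(x_1,\dots,x_{t-1},y_t)$ together with $P_S=(x_t)+(y_i\mid i\in S)+(x_i\mid i\in[t-1]\setminus S)$ for $S\subseteq[t-1]$. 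Their complementary maximal independent sets $T_0=\{y_1,\dots,y_{t-1},x_t\}$ and $T_S=\{y_t\}\cup\{x_i\mid i\in S\}\cup\{y_i\mid i\in[t-1]\setminus S\}$ are the facets of the (Cohen-Macaulay) independence complex of $G$; a direct check shows that $T_S$ and $T_{S'}$ meet in a ridge exactly when $|S\triangle S'|=1$, and that $T_0$ meets only $T_\emptyset$ in a ridge. Thus the facet--ridge graph is the hypercube $Q_{t-1}$ on the $T_S$ with one pendant facet $T_0$ hung at $T_\emptyset$, and the whole complex is the cone, with apex $y_t$, over the boundary of a cross-polytope (together with the pendant facet $T_0$).

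\emph{The radical and the obstruction.} Fix $f\notin I(G_\omega)^n$. From the symbolic primary decomposition one has $\sqrt{I(G_\omega)^n\colon f}=\bigcap_{j\,:\,f\notin Q_j}P_j$, where $Q_j$ denotes the $P_j$-primary component of $I(G_\omega)^n$; this is the Stanley--Reisner ideal of the subcomplex generated by the surviving facets. Since a pure complex that is not connected in codimension one cannot be Cohen-Macaulay, the only way this can fail is for the surviving facets to fall apart at the apex $y_t$, the prototype being $P_\emptyset\cap P_{[t-1]}$, whose complex consists of two simplices meeting in the single vertex $y_t$; this is exactly the non-Cohen-Macaulay radical produced in the proof of Lemma \ref{dif}, and it is ruled out on the nose by $d_i\neq d_k$. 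The plan is therefore to show that the weight conditions $(1),(2)$ forbid any monomial $f\notin I(G_\omega)^n$ from isolating such a disconnected set of surviving primes. Concretely, I would translate each membership condition $f\in Q_S$ into linear inequalities on the exponents $\deg_{x_i}(f),\deg_{y_i}(f)$, exactly as in the exponent-counting of Lemmas \ref{dif}, \ref{pqr} and \ref{rBound}, and prove a propagation statement: if two covers $S,S'$ with $|S\triangle S'|\geqslant 2$ both survive, then every cover along a shortest $S$--$S'$ path in $Q_{t-1}$ survives as well (and if $P_0$ survives then so does $P_\emptyset$). The inequalities $m_i\geqslant d_i\lceil d_k/(d_k-d_i)\rceil$ and $m_k\geqslant d_k\max\{2,\lceil d_k/(d_k-d_i)\rceil-2\}$ are precisely what make this propagation go through, so the surviving set is always connected in codimension one.

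\emph{The main difficulty.} Connectedness in codimension one is necessary but not sufficient, so the final and hardest step is to upgrade it to full Cohen-Macaulayness: I would show that the surviving set of orthants determines a shellable subcomplex of the cone over the cross-polytope boundary (equivalently, verify Reisner's criterion for every link), most likely by induction on $t$, peeling off a pendant edge through the localizations $I(G_\omega)_W$ of Lemma \ref{localization} to descend to a smaller star. The essential obstacle is exactly this uniform control over all $n$ at once: for $n$ large relative to $m_i/d_i$ the crude uniform bound behind Theorem \ref{Pn} is unavailable, and it is the sharp ceiling inequalities in $(2)$ that both guarantee unmixedness (via Lemma \ref{unmixed-star-center}) and force each radical $\sqrt{I(G_\omega)^n\colon f}$ to be a connected, shellable subcomplex rather than a degenerate one of the type in Lemma \ref{dif}.
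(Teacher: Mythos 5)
Your forward direction is correct and coincides with the paper's: both conditions follow by specializing Proposition \ref{CMn} to the star (part $(1)$ applied to the edges $x_ix_t$, part $(2)$ applied with $j=t$), and combining $(2)(b)$ with $m_k\geqslant 2d_k$ to obtain the maximum in the statement.

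The converse, however, has a genuine gap: after correctly invoking Lemma \ref{unmixed-star-center} for unmixedness and reducing via Lemma \ref{lemCM} to showing that $\sqrt{I(G_\omega)^n\colon f}$ is Cohen-Macaulay for every monomial $f\notin I(G_\omega)^n$, you never actually prove this. Your description of the minimal primes $P_0$, $P_S$ and of the facet--ridge structure (the hypercube $Q_{t-1}$ with one pendant facet) is accurate, but the two steps that would close the argument --- the ``propagation statement'' that the surviving primes are always connected in codimension one, and the upgrade from connectedness to shellability or Reisner's criterion --- are only announced (``I would translate\dots'', ``I would show\dots''), not carried out. As you yourself note, connectedness in codimension one is necessary but not sufficient, so the entire analytic content of the theorem --- converting the ceiling inequalities in $(2)$ into Cohen-Macaulayness of every colon radical, uniformly in $n$ and over all subsets $S\subseteq[t-1]$ --- is left undone, and this exponent bookkeeping is precisely the hard part. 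The paper avoids this analysis entirely by a different and much shorter device: it introduces deformed weights $\omega_k$ doubling the pendant weights $m_1,\dots,m_k$ (these still satisfy the hypotheses of Lemma \ref{unmixed-star-center}, so every $I(G_{\omega_k})^n$ is unmixed), localizes at $W_k=\{y_1,\dots,y_k\}$ to obtain ideals $J_k$ with $J_0=I(G_\omega)^n$ and with $J_t$ primary (hence Cohen-Macaulay), and then descends from $k$ to $k-1$ by showing that $y_k-x_k$ is regular on $R/J_{k-1}$ (using unmixedness together with Lemma \ref{cmCover}) and that $(R/J_{k-1})/(y_k-x_k)\cong (R/J_k)/(y_k-x_k)$, where the differences $y_i-x_i$ form a system of parameters. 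To salvage your approach you would have to prove the propagation and shellability claims in full; alternatively, adopt the regular-element descent, which needs only the unmixedness you already established.
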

\begin{proof} $(\Longrightarrow)$: This follows from Proposition \ref{CMn}.

$(\Longleftarrow)$:  We will prove that $I(G_\omega)^n$ is Cohen-Macaulay for any $n\geqslant 1$. For $k=0,\ldots,t$, we define the weight function $\omega_k$ on $G$ as follows:
$$
\omega_k(e)=
\begin{cases}
2\omega(e) & \text{ if } e = x_iy_i \text{ with } i \leqslant k,\\
\omega(e) & \text{ otherwise}.
\end{cases}
$$
In particular, $\omega_0 =\omega$. By Lemma \ref{unmixed-star-center}, we have $I(G_{\omega_k})^n$ is unmixed for all $k=0,\ldots,t$.

For $k=0,\ldots,t$, let $I_k = I(G_{\omega_k})^n$ and $J_k = (I_k)_{W_k}$ be the localization of $I_k$ with respect to $W_k$, where $W_k = \{y_1,\ldots,y_k\}$, then $J_0 = I_0 = I(G_\omega)^n$ and $J_k$ is unmixed  by Lemma \ref{localization}, since $I_k$ is unmixed. 

We now prove that $J_k$ is Cohen-Macaulay for every $k$ by the descending induction on  $k$.  If $k = t$, then
$$J_k = ((x_ix_t)^{d_i}, x_q^{2m_q}\mid i\in [t]\setminus \{t\} \text{ and } q\in [t])^n$$
is a primary monomial ideal, so it is Cohen-Macaulay. 

Suppose that $J_k$ is Cohen-Macaulay for $k > 0$. Note that 
$$(R/J_{k-1})/(y_k-x_k) \cong (R/J_k)/(y_k-x_k)$$
and $y_1-x_1,\ldots,y_t-x_t$ is a system of parameters for both $R/J_{k-1}$ and $R/J_k$. This implies that  $(R/J_k)/(y_k-x_k)$ is Cohen-Macaulay. Hence  $(R/J_{k-1})/(y_k-x_k)$ is  Cohen-Macaulay.

Next, we prove that $y_k-x_k$ is regular on $R/J_{k-1}$. So $R/J_{k-1}$ is Cohen-Macaulay.
By Lemma \ref{cmCover}, $y_k-x_k$ does not belong to any associated prime ideal of $I(G)$, so
$$y_k-x_k \notin \bigcup_{\p\in \ass(R/I(G))}\p.$$
Since  $\ass(R/J_{k-1}) \subseteq \ass( R/I(G))$, we have 
$$y_k-x_k \notin \bigcup_{\p\in \ass(R/J_{k-1})}\p.$$
Hence  $y_k-x_k$ is regular on $R/J_{k-1}$, and this implies that $R/J_{k-1}$ is Cohen-Macaulay. 

If we choose $k=0$, then  $J_0$ is Cohen-Macaulay, i.e. $I(G_\omega)^n$ is Cohen-Macaulay. The proof is now complete.
\end{proof}

\begin{rem} For the edge ideal $I$ of a simple graph or more generalized, of a weighted oriented graph (see e.g. \cite{HLMRV} for the definition of such a graph), if $I^n$ is unmixed for all $n\geqslant 1$, then the underlying graph is bipartite (see \cite{GMV, SVV}). However, this is not true for  edge-weighted graphs, as in the following result.
\end{rem}

\begin{prop} \label{triangle} Let $G$ be a connected  graph with a perfect matching $\{x_1y_1, \ldots$, $x_ty_t\}$ with $t\geqslant 2$, where each $y_i$ is a leaf vertex in $G$. Suppose  $G[x_1,\ldots,x_t]$ is a complete graph and $\omega$ is a weight  function on $E(G)$ such that
$\omega(x_jx_k)=1$ for all $x_jx_k\in E(G)$. Then $I(G_\omega)^n$ is Cohen-Macaulay for all $n\geqslant 1$ if and only if $\omega(x_iy_i)\geqslant 2$ for all $i=1,\ldots,t$.
\end{prop}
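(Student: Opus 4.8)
The forward implication is immediate. Since $I(G_\omega)^n$ is Cohen--Macaulay for every $n$, in particular $I(G_\omega)^2$ is Cohen--Macaulay, so $I(G)=\sqrt{I(G_\omega)^2}$ is Cohen--Macaulay and $G$ is a Cohen--Macaulay very well-covered graph. Because $G$ has no edge of the form $x_iy_j$ with $i\ne j$, the labelling coming from the perfect matching satisfies condition $(*)$ with $(2*)$--$(5*)$ holding vacuously, so Lemma \ref{VC1} applies. Applying Lemma \ref{VC1}(1) to any clique edge $x_ix_j$ gives $2=2\omega(x_ix_j)\leqslant \min\{m_i,m_j\}$; as $G[x_1,\dots,x_t]=K_t$ is connected with $t\geqslant 2$, every $x_i$ lies on such an edge, whence $m_i\geqslant 2$ for all $i$.

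For the converse I would follow the strategy of Theorem \ref{thmStar}. First I would record the primary decomposition $I(G)=P_0\cap P_1\cap\cdots\cap P_t$ with $P_0=(x_1,\dots,x_t)$ and $P_{i_0}=(y_{i_0})+(x_k\mid k\ne i_0)$, all of height $t$; in particular Lemma \ref{cmCover} holds, $y_k-x_k$ avoids every associated prime of $I(G)$, and $y_1-x_1,\dots,y_t-x_t$ is a system of parameters for each quotient below. Next I would introduce the doubled weight functions $\omega_k$ for $k=0,\dots,t$ by $\omega_k(x_iy_i)=2m_i$ for $i\leqslant k$, $\omega_k(x_iy_i)=m_i$ for $i>k$, and $\omega_k(x_jx_l)=1$; set $I_k=I(G_{\omega_k})^n$ and let $J_k=(I_k)_{W_k}$ be the localization with respect to $W_k=\{y_1,\dots,y_k\}$, so that $J_0=I(G_\omega)^n$. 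The point of the doubling is that modulo $y_k-x_k$ the $k$-th pendant generator of $J_{k-1}$ (weight $m_k$, with $y_k$ not inverted) and that of $J_k$ (weight $2m_k$, with $y_k$ inverted) both specialize to $x_k^{2m_k}$, giving the isomorphism $(R/J_{k-1})/(y_k-x_k)\cong (R/J_k)/(y_k-x_k)$.

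With these in hand, Cohen--Macaulayness follows by descending induction on $k$. The base case $J_t=(x_1^{2m_1},\dots,x_t^{2m_t},x_jx_l\mid j<l)^n$ is a power of an ideal primary to $(x_1,\dots,x_t)$ and involving only the $x$-variables, so $R/J_t$ is a polynomial ring in the $y$'s over an Artinian ring, hence Cohen--Macaulay. Assuming $J_k$ Cohen--Macaulay, the unmixedness of $J_k$ (preserved under localization by Lemma \ref{localization}) together with Lemma \ref{cmCover} makes $y_k-x_k$ a nonzerodivisor on $R/J_k$, so $(R/J_k)/(y_k-x_k)$ is Cohen--Macaulay; transporting along the isomorphism above, $(R/J_{k-1})/(y_k-x_k)$ is Cohen--Macaulay, and regularity of $y_k-x_k$ on $R/J_{k-1}$ (again by Lemma \ref{cmCover} and $\ass(R/J_{k-1})\subseteq\ass(R/I(G))$) upgrades this to $J_{k-1}$ Cohen--Macaulay. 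Taking $k=0$ shows $I(G_\omega)^n$ is Cohen--Macaulay.

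The main obstacle, used throughout the induction, is the unmixedness of $I(G_{\omega_k})^n$ for all $k$ and $n$ --- the analogue of Lemma \ref{unmixed-star-center}. Since every $\omega_k$ satisfies $\omega(x_jx_l)=1$ and $\omega(x_iy_i)\geqslant 2$, I would isolate a single lemma: if $G[x_1,\dots,x_t]=K_t$, all clique edges have weight $1$, and all pendant weights are at least $2$, then $I(G_\omega)^n$ is unmixed for every $n$. The plan is to prove $I(G_\omega)^n=I(G_\omega)^{(n)}=Q_0^n\cap Q_1^n\cap\cdots\cap Q_t^n$, where $Q_0=(x_1^{m_1},\dots,x_t^{m_t},x_jx_l\mid j<l)$ and $Q_{i_0}=(y_{i_0}^{m_{i_0}},x_k\mid k\ne i_0)$; each $Q_i^n$ is $P_i$-primary (for $Q_0$ because its restriction to $K[x_1,\dots,x_t]$ is primary to the maximal ideal, for $Q_{i_0}$ because it is a monomial complete intersection), so only the inclusion $\bigcap_i Q_i^n\subseteq I(G_\omega)^n$ requires work. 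Alternatively one can argue as in Lemma \ref{unmixed-star-center}, assuming an embedded prime $\mathfrak p=I(G_\omega)^n\colon f$ containing a pair $x_i,y_i$ and factoring $fx_i$ and $fy_i$ into generators to force $f\in I(G_\omega)^n$. Either way the complete-graph case is combinatorially lighter than the star case, since the uniform weight $1$ removes the ceiling conditions on distinct $d_i$; the delicate point is bookkeeping how the weight-$1$ edges $x_jx_l$ are shared between the two factorizations, and the hypothesis $m_i\geqslant 2$, equivalently $2\omega(x_jx_l)\leqslant\min\{m_i,m_j\}$ as in Lemma \ref{VC1}, is exactly what allows any surplus clique contribution to be repackaged into pendant powers $(x_iy_i)^{m_i}$.
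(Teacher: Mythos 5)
Your proposal is correct and takes essentially the same route as the paper: necessity by reducing to second-power results (the paper restricts to an induced weighted path of length $3$ and invokes Lemma \ref{L2} and Proposition \ref{n2}, while you invoke Lemma \ref{VC1} after checking condition $(*)$ holds vacuously; both ultimately rest on Lemma \ref{twotimes}), and sufficiency by combining an all-powers unmixedness lemma with the doubled-weight, localization, and $y_k-x_k$ regular-element descending induction of Theorem \ref{thmStar}. The one step you leave as a plan rather than execute --- unmixedness of $I(G_\omega)^n$ when all clique weights are $1$ and all pendant weights are at least $2$ --- is exactly the paper's Step 1, and the paper proves it by your second suggested method: assuming an embedded prime $I^n\colon f$ containing some pair $x_i,y_i$, factoring $fx_i$ and $fy_i$ into generators with the disjointness and membership claims recycled from Lemma \ref{unmixed-star-center}, and then using $m_i,m_k\geqslant 2$ to rewrite products such as $(x_iy_i)^{m_i}(x_ky_k)^{m_k}$ and $x_jx_q x_i^{m_i}$ so as to expose clique generators like $(x_ix_k)^2$ or $(x_ix_j)(x_ix_q)$, forcing the contradiction $f\in I^n$.
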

\begin{proof}  $(\Longrightarrow)$: For each $i$, we choose  $j\ne i$. Let $H=G\setminus\{x_s\mid s\notin \{i,j\}\}$, then $E(H)=\{x_iy_i,x_ix_j,x_jy_j\}$. Since $I(G_\omega)^n$ is Cohen-Macaulay for all $n\geqslant 1$, $I(H_\omega)^n$ is also Cohen-Macaulay. Thus $\omega(x_iy_i)\geqslant 2$ by Proposition \ref{n2}.

$(\Longleftarrow)$: We will prove that $I(G_\omega)^n$ is Cohen-Macaulay by induction on $n$. The  case $n \leqslant 2$ is shown separately in Theorem \ref{Tk}. So we can assume that  $n\geqslant 3$, and prove the statements by  induction on $t$. The case  $t \leqslant 2$ is shown separately
in  Proposition \ref{n2}. Thus, in the following, we can assume that  $t\geqslant 3$. We will divide the proof into the following two steps:
\medskip

{\it Step $1$}: We prove that $I(G_\omega)^n$ is unmixed. Let $I = I(G_\omega)$ and  $m_i = \omega(x_iy_i)$ for $i=1,\ldots,t$. Conversely, suppose that $I^n$ is not unmixed, then there exists some monomial $f\notin I^n$  such that $fx_i,f y_i\in I^n$ for some  $i\in [t]$. 
We can write $f x_i = gf_1\cdots f_n \text{ and } f y_i= h g_1\cdots g_n$, where $g,h$ are monomials and $f_i,g_i$ are monomial generators of $I$ for $i=1,\ldots,n$. Similarly to Claims  $1$, $4$ and $5$  of the proof in  Lemma \ref{unmixed-star-center},  we 
obtain that
\begin{enumerate}
    \item $\{f_1,\ldots,f_n\} \cap \{g_1,\ldots,g_n\}=\emptyset$;
    \item there exists some $p\in [n]$ such that  $g_p = (x_iy_i)^{m_i}$;
   \item  there exists some $p\in [n]$ such that $f_p =x_ix_j$ for some $j\ne i$.  
\end{enumerate}
We  can assume that $g_n = (x_iy_i)^{m_i}$ and $f_n = x_ix_j$ for some $j\ne i$.
{\it Claim}: $(x_ky_k)^{m_k}\notin \{g_1,\ldots,g_n\}$ for every $k\ne i$. In fact,  assume that $(x_ky_k)^{m_k}\in \{g_1,\ldots,g_n\}$ for some $k\ne i$. Then, we can assume that $g_{n-1}=(x_ky_k)^{m_k}$, so that
$$fy_i = hg_1\cdots g_{n-2}(x_ky_k)^{m_k}(x_iy_i)^{m_i}.$$
Therefore,
$$f = (h x_k^{m_k-2} y_k^{m_k}x_i^{m_i-2} y_i^{m_i-1})g_1\cdots g_{n-2}(x_ix_k)^2\in I^n,$$
a contradiction, and the claim follows.

\medskip
Since $x_ix_j\mid fx_i$, we have $x_j\mid f$, and hence  $x_j\mid f y_i$. It follows that either $x_j\mid h$ or $x_j\mid g_k$ for some $k\ne n$. In the former case
$f = (h/x_j)(x_i^{m_i-1}y_i^{m_i-1})g_1\cdots g_{n-1}(x_ix_j) \in I^n$, a contradiction. In the second case, by Claim, (1) and (3), we can assume that $g_k = x_jx_q$ for some $q\notin \{i,j\}$. Without loss of generality, we can assume that $k=n-1$. Then
$$f = (hx_i^{m_i-2}y_i^{m_i-1}) g_1\cdots g_{n-2} (x_ix_j)(x_ix_q) \in I^n,$$
a contradiction. This implies that $I^n$ is unmixed.
\medskip

{\it Step $2$}: We prove that $I(G_\omega)^n$ is Cohen-Macaulay. This can be shown by similar arguments as in the proof of Theorem \ref{thmStar}, so
we omit its proof.
\end{proof}


\medskip
\hspace{-6mm} {\bf Data availability}

The data that has been used is conﬁdential.

\medskip
\hspace{-6mm} {\bf Acknowledgement}

 \vspace{3mm}
\hspace{-6mm}  The third author is  supported by the Natural Science Foundation of Jiangsu Province (No. BK20221353) and
the National Natural Science Foundation of China (12471246). Part of this work was done while the second and the third authors were at the Vietnam Institute of Advanced Studies in Mathematics (VIASM) in Hanoi, Vietnam. We would like to thank VIASM for its hospitality.

\end{document}